\documentclass{amsart}

\usepackage{amssymb,latexsym}
\usepackage{graphicx}
\usepackage[all,2cell]{xy}
\SelectTips{eu}{12}

\newcommand{\Z}{\mathbb{Z}}

\newcommand{\Q}{\mathbb{Q}}
\newcommand{\R}{\mathbb{R}}

\newcommand{\SO}{\mathrm{SO}}
\newcommand{\OO}{\mathrm{O}}

\newcommand{\K}{\mathbf{k}}
\newcommand{\F}{\mathbf{F}}

\newtheorem{lemma}{Lemma}[section]

\newtheorem{proposition}[lemma]{Proposition}
\newtheorem{theorem}[lemma]{Theorem}
\newtheorem{corollary}[lemma]{Corollary}

\theoremstyle{definition}
\newtheorem{remark}[lemma]{Remark}
\newtheorem{definition}[lemma]{Definition}
\newtheorem{example}[lemma]{Example}

\begin{document}
\parindent0em
\setlength\parskip{.1cm}
\title{Intersection homology of linkage spaces}
\author{Dirk Sch\"utz}
\address{Department of Mathematical Sciences\\ University of Durham\\ Science Laboritories\\ South Rd\\ Durham DH1 3LE\\ United Kingdom}
\email{dirk.schuetz@durham.ac.uk}
\begin{abstract}
We consider the moduli spaces $\mathcal{M}_d(\ell)$ of a closed linkage with $n$ links and prescribed lengths $\ell\in \R^n$ in $d$-dimensional Euclidean space. For $d>3$ these spaces are no longer manifolds generically, but they have the structure of a pseudomanifold.

We use intersection homology to assign a ring to these spaces that can be used to distinguish the homeomorphism types of $\mathcal{M}_d(\ell)$ for a large class of length vectors in the case of $d$ even. This result is a high-dimensional analogue of the Walker conjecture which was proven by Farber, Hausmann and the author.
\end{abstract}
\maketitle

\section{Introduction}
Configuration spaces of closed linkages in Euclidean space modulo isometry group have occured in many contexts in recent years. Planar linkages can easily be visualised and the topology of the resulting moduli spaces are now well understood, culminating in the proof of the Walker conjecture by Farber, Hausmann and the author in \cite{fahasc,schue2}. Roughly this conjecture states that the cohomology of the linkage space detects the length vector of the linkage. By a \em length vector \em we simply mean an element $\ell=(\ell_1,\ldots,\ell_n)\in \R^n$ such that all entries are positive. The $i$-th entry $\ell_i$ describes the length of the $i$-th link.

For linkages in 3-dimensional Euclidean space, the resulting moduli spaces occur naturally in algebraic geometry and symplectic geometry, see e.g.\ \cite{klyach, kapmil}, and the cohomology rings have been calculated in Hausmann and Knutson \cite{hauknu}. Using this description of cohomology, the analogue of the Walker conjecture was proven in \cite{fahasc}, with the single exception that for $n=4$ there exist two different length vectors whose moduli spaces are both the 2-sphere.

Much less is known for linkages in higher-dimensional Euclidean spaces. For linkages in 5-dimensional space, Foth and Lozano obtained analogous results of Kapovich and Millson \cite{kapmil} in a quaternion setting rather than a complex one. Kamiyama \cite{kamiya} obtained an Euler characteristic formula for equilateral linkages in 4-dimensional space, and more recently homology calculations were obtained in \cite{schuet}.

The moduli space we are interested in is
\begin{eqnarray*}
 \mathcal{M}_d(\ell)&=&\left\{ (x_1,\ldots,x_n)\in (S^{d-1})^n\,\left|\, \sum_{i=1}^n \ell_ix_i=0\right\}\right/ \SO(d)
\end{eqnarray*}
where $\SO(d)$ acts diagonally on the product of spheres. In particular we want to know how the topology of $\mathcal{M}_d(\ell)$ depends on the length vector $\ell$. Permuting the coordinates of $\ell$ does not change the topology as we can simply permute the coordinates of $\mathcal{M}_d(\ell)$. It also turns out that small changes of $\ell$ do not change the topology, provided that $\ell$ does not admit a collinear configuration, that is, if $\mathcal{M}_1(\ell)=\emptyset$. If this is the case, we call the length vector \em generic\em.

Indeed, the non-generic length vectors are the boundaries of so-called chambers, connected open subsets of $\R^n$ such that any two length vectors in the same chamber admit homeomorphic moduli spaces.

In general, if two length vectors $\ell$, $\ell'$ are in different chambers, even after permuting coordinates, it does not necessarily follow that $\mathcal{M}_d(\ell)$ and $\mathcal{M}_d(\ell')$ are not homeomorphic. In fact, Schoenberg \cite{schoen} showed that for $d=n$ the moduli space $\mathcal{M}_d(\ell)$ is either a topological disc or empty, from which it can be seen that for $d=n-1$ the moduli spaces are empty or topologically a sphere. The case where the moduli space is empty is represented by the chamber where one coordinate $\ell_j$ is bigger than the sum of all other $\ell_i$, so in all other cases we always get the same moduli space. Notice that the case $d>n$ reduces to the case $d=n$, as the extra dimensions in $\R^d$ cannot be taken advantage of by linear dependence of the $x_1,\ldots,x_n$.

In the case $d<n-1$ the topology of the moduli space does depend on the chamber, as can be seen from the homology calculations in \cite{schuet}. The main result of this paper shows that for a large class of length vectors the topology of the moduli space does recover the chamber of the length vector. 

\begin{theorem}
 \label{main_theorem}
Let $d\geq 4$ be even, $\ell,\ell'\in \R^n$ be generic, $d$-regular length vectors. If $\mathcal{M}_d(\ell)$ and $\mathcal{M}_d(\ell')$ are homeomorphic, then $\ell$ and $\ell'$ are in the same chamber up to a permutation.
\end{theorem}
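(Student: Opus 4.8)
The plan is to build an algebraic invariant from intersection homology that is sensitive enough to recover the chamber, following the template of the planar and spatial cases but working one dimension at a time. First I would analyze the local structure of $\mathcal{M}_d(\ell)$: the space is a pseudomanifold whose singularities occur exactly at configurations that factor through a lower-dimensional Euclidean space, i.e.\ where the $x_i$ span a proper subspace. The collinear configurations are excluded by genericity, but for $d\geq 4$ there are still strata where the span has dimension $2,3,\ldots,d-1$. I would stratify $\mathcal{M}_d(\ell)$ by these span dimensions and identify the link of each stratum: a configuration spanning an $m$-dimensional subspace has a neighbourhood modelled on a product of a piece of $\mathcal{M}_m(\ell_J)$-type data with a cone on a join of spheres coming from the normal $\SO(d)/\SO(m)$ directions. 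The hypothesis that $d$ is even is what makes these links have the right parity so that middle-perversity intersection homology behaves like that of an even-dimensional manifold, and $d$-regularity (a condition presumably defined earlier, ruling out degenerate sub-linkages) ensures the sub-moduli spaces appearing in the links are themselves well-behaved.

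Next I would compute $I\!H_*(\mathcal{M}_d(\ell))$ with its intersection product, obtaining the ring referred to in the abstract. The key combinatorial input is the notion of \emph{short} and \emph{long} subsets $J\subseteq\{1,\ldots,n\}$: $J$ is short if $\sum_{i\in J}\ell_i < \sum_{i\notin J}\ell_i$. As in \cite{hauknu,fahasc}, the (intersection) homology should have a basis indexed by short subsets containing a fixed element, and the ring structure should encode which unions of short sets remain short. I would establish a presentation of this ring, say $\mathbb{Z}[R_1,\ldots,R_n]$ modulo relations determined by the long subsets, analogous to the Hausmann--Knutson description, and verify by a Mayer--Vietoris / spectral-sequence argument over the stratification that intersection homology indeed yields this presentation when $d$ is even.

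Then comes the rigidity step: two length vectors in different chambers (up to permutation) determine genuinely different collections of short subsets, and I would show this difference is detected by the ring. Concretely, one extracts from the ring the partial order data — e.g.\ by looking at which products of the degree-one-type generators vanish — and reconstructs the matroid-like structure of short sets; by the results underlying the Walker conjecture, this structure determines the chamber. The main obstacle I expect is the middle step: proving that the intersection homology ring of $\mathcal{M}_d(\ell)$ really does reduce to the same combinatorial object across all even $d$, uniformly in $\ell$. The links of the singular strata are nontrivial, and controlling the intersection product across strata — in particular checking that no new relations or generators appear beyond those predicted by the short-subset combinatorics, and that the cone points do not kill classes one needs — is where the parity of $d$ and the $d$-regularity hypothesis must be used carefully. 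A secondary difficulty is that homeomorphisms need not preserve the stratification a priori, so one must argue that the intrinsic stratification of the pseudomanifold is topologically invariant here, which again should follow from the local normal forms computed in the first step.
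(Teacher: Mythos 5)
Your overall strategy (stratify by span, build a ring from intersection homology, identify it with a combinatorial algebra of short subsets, then recover the chamber) is the same as the paper's, but there is a genuine gap at the central mechanism. You propose to work with \emph{middle-perversity} intersection homology and a single fixed perversity. That cannot produce the ring you need: the Goresky--MacPherson intersection pairing $I^{\mathbf{p}}H_i\times I^{\mathbf{q}}H_j\to I^{\mathbf{r}}H_{i+j-n}$ requires $\mathbf{p}+\mathbf{q}\leq\mathbf{r}$, so a fixed perversity $\mathbf{p}\neq\mathbf{0}$ does not admit iterated products, and the zero perversity gives ordinary cohomology, which the paper points out is insufficient for $d\geq 4$. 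The paper's key device is to let the perversity grow with the degree: one uses a sequence $\mathbf{p}_k=(0,2k,3k,\ldots,(d-2)k)$ with $\mathbf{p}_j+\mathbf{p}_k=\mathbf{p}_{j+k}$, and the class of the sub-linkage space $\mathcal{M}_d(\ell_J)$ is $\mathbf{p}_{|J|}$-allowable \emph{with equality} in the allowability bound, so no smaller perversity works. Without this graded family of perversities the generators $Y_J=[\mathcal{M}_d(\ell_J)]$ for $|J|\geq 2$ are simply not defined as intersection homology classes, and your ``Hausmann--Knutson style presentation'' has nothing to present. Relatedly, your heuristic for where evenness of $d$ enters (parity of the links making middle perversity behave well) is not the actual mechanism: in the paper $d$ even is used (i) to get graded anticommutativity $Y_iY_j=-Y_jY_i$, hence an exterior algebra, and (ii) in the degree-two argument showing $2[\mathcal{M}_d(\ell^-)]=2[\mathcal{M}_d(\ell^+)]$, which kills a generator rationally and makes the rank count in the generation step come out right.

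The second gap is that the two hardest steps are named as ``obstacles'' rather than proved. Linear independence of the monomials $Y_J$ indexed by short sets $J\cup\{n\}$ is established in the paper by an explicit geometric construction of dual cycles $Z_J\in I^{\mathbf{0}}H_{|J|(d-1)}(\mathcal{M}_d(\ell))$ with $Z_J\cdot Y_K=\delta_{JK}$ (an embedded $(S^{d-1})^{|J|}$ built from a ``snake charmer'' robot-arm homotopy, using $d$-regularity to keep the image off the singular strata); and the generation statement is proved by a Morse--Bott induction on $n$ together with Lefschetz duality for pseudomanifolds with boundary, not by a Mayer--Vietoris argument over the stratification. Your final rigidity step and the appeal to topological invariance of the intersection product are fine in outline and match the paper (which cites Goresky--MacPherson for invariance and the exterior face ring literature to recover the simplicial complex of short sets), but as written the proposal does not reach that stage.
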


The notion of $d$-regular is defined in Section \ref{section_linkspaces}, in view of Schoenberg's result it should be pointed out that for $n=d+1$ there are exactly two chambers up to permutation which contain $d$-regular length vectors, one with empty moduli space and one where the moduli space is a sphere. If $n$ is large compared to $d$, $d$-regularity is more common, and we would expect the ratio of all $d$-regular length vectors in $\R^n$ by all length vectors in $\R^n$ to converge to $1$.

The statement of the theorem is known to be true for $d=2$, as it follows from the proof of the Walker conjecture in \cite{fahasc,schue2}, and for $d=3$, as was shown in \cite{fahasc}. Every generic length vector is $2$-regular, and there is only one chamber up to permutation so that its length vectors are not $3$-regular.

Homology calculations are not enough to obtain Theorem \ref{main_theorem}, and in fact the cases $d=2,3$ were obtained using cohomology. If we do not form the quotient by $\SO(d)$ and look instead at a configuration space $E_d(\ell)$ (so that $\mathcal{M}_d(\ell)=E_d(\ell)/\SO(d)$), cohomology is again enough to detect the chamber of $\ell$, see Farber and Fromm \cite{farfro}. 

It is clear from the calculations in \cite{schuet} that ordinary cohomology is not enough for $d\geq 4$. Instead we use intersection homology in this paper. By letting the perversity vary with the degree of the intersection homology group, we can use the intersection pairing to assign a ring to each moduli space which behaves very similar to the cohomology ring in the case $d=2$. For even $d\geq 4$ we can explicitely describe this ring and use it to prove Theorem \ref{main_theorem}.

The idea of the ring based on intersection homology is roughly the following. Given $J\subset \{1,\ldots,n\}$ with $n\in J$ we can form a new length vector $\ell_J\in \R^{n-|J|+1}$ by merging the links corresponding to the elements of $J$ into one link. This leads to an inclusion $\mathcal{M}_d(\ell_J)\subset \mathcal{M}_d(\ell)$. Furthermore, if $K\cap J=\{n\}$, the intersection $\mathcal{M}_d(\ell_J)\cap \mathcal{M}_d(\ell_K)=\mathcal{M}_d(\ell_{J\cup K})$ is transversal. Each $J$ has a perversity $\mathbf{p}_{|J|}$ such that we have an element $[\mathcal{M}_d(\ell_J)]\in I^{\mathbf{p}_{|J|}}H_\ast(\mathcal{M}_d(\ell))$ so that these elements behave well with the intersection pairing of Goresky-MacPherson \cite{gormac, gorma2}. For even $d$, these elements turn out to span an exterior algebra which is invariant under homeomorphism. The condition of $d$-regularity ensures that $\mathcal{M}_d(\ell_J)$ is not a disc in which case it would be invisible for homology.

We expect Theorem \ref{main_theorem} to be true for odd $d\geq 5$ and it may be possible to prove it using the intersection ring defined in this paper, however, the actual determination of this ring for odd $d$ will probably require new techniques.

\section{Linkage spaces and intersection homology}
\label{section_linkspaces}
In order to study $\mathcal{M}_d(\ell)$ it is useful to consider the \em chain space\em
\begin{eqnarray*}
 \mathcal{C}_d(\ell)&=&\left\{(x_1,\ldots,x_{n-1})\in (S^{d-1})^{n-1}\,\left|\, \sum_{i=1}^{n-1}\ell_i x_i=-\ell_ne_1\right\}\right.
\end{eqnarray*}
where $e_1=(1,0,\ldots,0)\in \R^d$ is the usual first coordinate vector. If we let $\SO(d-1)$ act on $S^{d-1}$ by fixing the first coordinate, we see that $\SO(d-1)$ acts diagonally on $\mathcal{C}_d(\ell)$ and
\begin{eqnarray*}
 \mathcal{M}_d(\ell)&\cong & \mathcal{C}_d(\ell)/\SO(d-1).
\end{eqnarray*}
We also define
\begin{eqnarray*}
 \mathcal{N}_d(\ell)&=&\left\{ (x_1,\ldots,x_n)\in (S^{d-1})^n\,\left|\, \sum_{i=1}^n \ell_ix_i=0\right\}\right/ \OO(d)
\end{eqnarray*}
so that $\mathcal{N}_d(\ell)\cong \mathcal{M}_d(\ell)/(\Z/2)$.

\begin{definition}
 Let $\ell\in \R^n$ be a length vector. A subset $J\subset \{1,\ldots,n\}$ is called \em $\ell$-short\em, if
\begin{eqnarray*}
 \sum_{j\in J}\ell_j&<& \sum_{i\notin J}\ell_i.
\end{eqnarray*}
It is called \em $\ell$-long\em, if the complement is $\ell$-short, and \em $\ell$-median\em, if it is neither $\ell$-short nor $\ell$-long. The length vector is called \em generic\em, if there are no $\ell$-median subsets.

For $m\in \{1,\ldots,n\}$ the length vector is called \em $m$-dominated\em, if $\ell_m\geq \ell_i$ for all $i=1,\ldots,n$.
\end{definition}

After permuting the coordinates we can always assume that $\ell$ is $n$-dominated. In fact, we can also assume that $\ell$ is \em ordered\em, meaning that $\ell_1\leq \ell_2\leq\cdots\leq \ell_n$.

If $\ell$ is $m$-dominated and $k\leq n-3$, we write
\begin{eqnarray*}
 \mathcal{S}_k(\ell)&=& \{J\subset\{1,\ldots,n\}\,|\,m\in J, \, |J|=k+1, J \mbox{ is }\ell\mbox{-short}\}.
\end{eqnarray*}

Note that a length vector $\ell$ can be $m$-dominated by more than one $m\in \{1,\ldots,n\}$. In this case we will form $\mathcal{S}_k(\ell)$ using the maximal $m$ which dominates $\ell$.

If $J\subset\{1,\ldots,n\}$, we define the hyperplane
\begin{eqnarray*}
 H_J&=&\left\{(x_1,\ldots,x_n)\in \R^n\,\left|\, \sum_{j\in J}x_j= \sum_{j\notin J}x_j\right\}\right.
\end{eqnarray*}
and let
\begin{eqnarray*}
 \mathcal{H}&=& \R^n_{>0} - \bigcup_{J\subset\{1,\ldots,n\}}H_J,
\end{eqnarray*}
where $\R^n_{>0}=\{(x_1,\ldots,x_n)\in \R^n\,|\,x_i>0\}$. Then $\mathcal{H}$ has finitely many components, which we call \em chambers\em. It is clear that a length vector $\ell$ is generic if and only if $\ell\in\mathcal{H}$. 

It is shown in \cite{hausma} that if $\ell$ and $\ell'$ are in the same chamber, then $\mathcal{C}_d(\ell)$ and $\mathcal{C}_d(\ell')$ are $\OO(d-1)$-equivariantly diffeomorphic. In particular, $\mathcal{M}_d(\ell)$ and $\mathcal{M}_d(\ell')$ are homeomorphic.

It is easy to see that two $m$-dominated generic length vectors $\ell$, $\ell'$ are in the same chamber if and only if $\mathcal{S}_k(\ell)=\mathcal{S}_k(\ell')$ for all $k=0,\ldots,n-3$.

\begin{definition}
 Let $\ell\in \R^n$ be a length vector and $d\geq 2$. Then $\ell$ is called $d$-regular, if
\begin{eqnarray*}
 \bigcap_{J\in \mathcal{L}^d(\ell)}J &\not=& \emptyset
\end{eqnarray*}
where $\mathcal{L}^d(\ell)$ are the subsets $J\subset\{1,\ldots,n\}$ with $d-1$ elements that are $\ell$-long. If $\mathcal{L}^d(\ell)=\emptyset$, we let the intersection above be $\{1,\ldots,n\}$.
\end{definition}

If $\ell$ is ordered, then $\ell$ is $d$-regular if and only if $\{n-d+1,n-d+2,\ldots,n-1\}$ is not $\ell$-long. For a generic length vector this is equivalent to $\mathcal{S}_{n-d}(\ell)=\emptyset$.

It follows from the definition that every length vector with $n\geq 2$ is $2$-regular. Furthermore, there is only one generic length vector up to permutation which is not $3$-regular, namely $\ell=(0,\ldots,0,1,1,1)$. In \cite{fahasc}, $4$-regular was called normal.

In the case $d=n-1$, there are only two generic length vectors $\ell\in \R^n$ up to permutation, namely $\ell=(1,\ldots,1,n-2)$ and $\ell'=(0,\ldots,0,1)$. If $n$ is large compared to $d$, $d$-regularity gets more common, and we would expect the ratio of all $d$-regular length vectors in $\R^n$ by all length vectors in $\R^n$ to converge to $1$.

For $d=2,3$ and $\ell$ generic, the spaces $\mathcal{M}_d(\ell)$ are closed manifolds, but for $d\geq 4$ this is no longer the case. But as we will see in Section \ref{section_pseudo}, these spaces are pseudomanifolds for $n>d$. For the precise definition of a pseudomanifold, we refer the reader to \cite{gorma2}. Since we need intersection homology below, we do recall some of the definitions in order to agree on notation. Given an $n$-dimensional pseudomanifold $X^n$, there is a stratification
\[
 \emptyset \subset X^0 \subset X^1 \subset \cdots \subset X^{n-2} \subset X^n.
\]
A (Goresky-MacPherson) \em perversity \em is a function $\mathbf{p}\colon\{2,\ldots,n\} \to \{0,1,\ldots\}$ such that $\mathbf{p}(2)=0$ and $\mathbf{p}(i)\leq \mathbf{p}(i+1) \leq \mathbf{p}(i)+1$ for all $i=2,\ldots,n-1$.

Simple examples are the zero-perversity $\mathbf{0}$ and the top perversity $\mathbf{t}$ with $\mathbf{t}(i)=i-2$.

For a perversity $\mathbf{p}$ the intersection homology $I^\mathbf{p}H_\ast(X)$ is the homology of a subcomplex $I^\mathbf{p}C_\ast(X)$ of the ordinary chains $C_\ast(X)$. If $X$ admits a PL-structure, a PL-chain $\xi\in C_r(X)$ is called $\mathbf{p}$-allowable, if its support $|\xi|\subset X$ satisfies
\begin{eqnarray*}
 \dim(|\xi|\cap X^{n-k}) & \leq & r-k+\mathbf{p}(k)
\end{eqnarray*}
for all $k=2,\ldots,n$. The subcomplex $I^\mathbf{p}C_\ast(X)$ then consists of those $\xi$ for which $\xi$ and $\partial \xi$ are $\mathbf{p}$-allowable. For more details see \cite{gormac, gorma2} and \cite{kirwoo}.

For normal pseudomanifolds there are canonical isomorphisms
\[
 I^\mathbf{t}H_\ast(X)\,\,\,\cong\,\,\,H_\ast(X)\hspace{0.4cm}\mbox{and}\hspace{0.4cm}I^\mathbf{0}H_\ast(X)\,\,\, \cong \,\,\, H^{n-\ast}(X),
\]
see \cite[\S 4.5]{kirwoo}. It follows from Lemma \ref{localstrat} below that $\mathcal{M}_d(\ell)$ is a normal pseudomanifold whenever it is a pseudomanifold.

One of the features of intersection homology is that it satisfies Poincar\'e duality when using field coefficients. We will also need a Lefschetz duality version for which we require pseudomanifolds with boundary. Basically, a pseudomanifold with boundary $X$ is such that $X-\partial X$ is an $n$-dimensional pseudomanifold, and $\partial X$ is an $(n-1)$-dimensional pseudomanifold which has a neighborhood in $X$ stratified homeomorphic to $\partial X\times [0,1)$, see \cite[\S 4]{fried2}. For a compact, orientable $n$-dimensional pseudomanifold with boundary we then get isomorphisms
\begin{eqnarray*}
 I^\mathbf{p}H_i(X;\F)&\cong & I^{\mathbf{t}-\mathbf{p}}H_{n-i}(X,\partial X;\F)
\end{eqnarray*}
for all $i=0,\ldots,n$, assuming that $\F$ is a field, see \cite[\S 4]{fried2}. 

\section{Linkage spaces as pseudomanifolds}
\label{section_pseudo}
We want to describe the stratification of $\mathcal{M}_d(\ell)$. This is basically given by $\mathcal{M}_k(\ell)$ where $k < d$. However, the natural map $\mathcal{M}_k(\ell)\to \mathcal{M}_d(\ell)$ is not injective. But it induces an injection $\mathcal{N}_k(\ell) \to \mathcal{M}_d(\ell)$.

\begin{definition}
 Let $x=[x_1,\ldots,x_n]\in \mathcal{M}_d(\ell)$. Then the \em rank \em of $x$ is defined as the dimension of the vector space spanned by $x_1,\ldots,x_n$.
\end{definition}

Clearly, ${\rm rank}\,(x) \leq d$, and since the $x_i$ are linearly dependent, we have ${\rm rank}\,(x) \leq n-1$. We will however be more interested in the case $n >d$, as for $n\leq d$ we get that $\mathcal{M}_d(\ell)$ is contractible or empty.

Furthermore, if $k = {\rm rank}\,(x)$, then $x$ is in the image of the natural map $\mathcal{M}_k(\ell)\to \mathcal{M}_d(\ell)$.

\begin{lemma}
Let $\ell\in \R^n$ be a length vector, and $2\leq k < d$. Then there is an inclusion $\mathcal{N}_k(\ell)\subset \mathcal{M}_d(\ell)$, and $\mathcal{N}_k(\ell)$ consists of all elements $x\in \mathcal{M}_d(\ell)$ with ${\rm rank}\,(x) \leq k$.
\end{lemma}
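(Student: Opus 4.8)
The plan is to produce the inclusion map explicitly and then check surjectivity onto the rank-$\leq k$ locus. First I would construct the map $\mathcal{N}_k(\ell)\to\mathcal{M}_d(\ell)$. Fix a linear isometric embedding $\R^k\hookrightarrow\R^d$ onto the first $k$ coordinates, which carries $S^{k-1}$ into $S^{d-1}$. A point of $\mathcal{N}_k(\ell)$ is represented by $(x_1,\ldots,x_n)\in(S^{k-1})^n$ with $\sum_i\ell_ix_i=0$, taken modulo $\OO(k)$; applying the embedding gives a configuration in $(S^{d-1})^n$ satisfying the same linear relation, hence a point of $\mathcal{M}_d(\ell)$ after passing to the $\SO(d)$-quotient. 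I would check this is well defined: two representatives of the same $\OO(k)$-orbit differ by some $A\in\OO(k)$, and under $\R^k\hookrightarrow\R^d$ this $A$ extends to an element of $\SO(d)$ (pad $A$ by an identity block if $\det A=1$, and by a reflection in one of the extra coordinates composed with $\mathrm{diag}(A,1,\ldots,1)$ if $\det A=-1$; this uses $k<d$), so the two configurations agree in $\mathcal{M}_d(\ell)$. The image clearly consists of points of rank $\leq k$, since the $x_i$ lie in the $k$-dimensional subspace $\R^k$.

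Next I would show this map is injective. Suppose $(x_1,\ldots,x_n)$ and $(y_1,\ldots,y_n)$, both living in $\R^k\subset\R^d$, represent the same point of $\mathcal{M}_d(\ell)$, so $y_i=Bx_i$ for some $B\in\SO(d)$. The span $V$ of the $x_i$ has dimension at most $k$; since $\sum\ell_ix_i=0$ is a nontrivial relation the $x_i$ span a proper subspace, but more importantly $B$ must carry the span of the $x_i$ into the span of the $y_i$, both contained in $\R^k$. Choosing an orthonormal basis adapted to $\R^k$, the restriction of $B$ gives an element of $\OO(k)$ matching up the two configurations inside $\R^k$, so they represent the same point of $\mathcal{N}_k(\ell)$. (The reason we get only $\OO(k)$ and not $\SO(k)$, hence $\mathcal{N}_k$ rather than $\mathcal{M}_k$, is that $B$ may reverse orientation on $\R^k$ while being orientation-preserving on $\R^d$; this is precisely the discrepancy noted before the lemma.) Finally, for surjectivity onto the rank-$\leq k$ locus: if $x=[x_1,\ldots,x_n]\in\mathcal{M}_d(\ell)$ has ${\rm rank}(x)=j\leq k$, pick an orthonormal basis of $\R^d$ whose first $j$ vectors span the subspace spanned by the $x_i$; applying the corresponding element of $\SO(d)$ moves the configuration into $\R^j\subset\R^k\subset\R^d$, exhibiting $x$ in the image.

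The main technical point — and the step I expect to require the most care — is the bookkeeping with orientations when extending elements of $\OO(k)$ to $\SO(d)$ and restricting elements of $\SO(d)$ to the $k$-dimensional subspace: one must use $k<d$ to have a spare coordinate available to absorb a sign, and one must be careful that the identification of the subspace with $\R^k$ is only canonical up to $\OO(k)$, which is exactly why the quotient that appears is $\mathcal{N}_k(\ell)=\mathcal{M}_k(\ell)/(\Z/2)$ and not $\mathcal{M}_k(\ell)$. Continuity of the inclusion and the fact that it is a homeomorphism onto its image (rather than merely a continuous bijection) follow from compactness of $\mathcal{N}_k(\ell)$ and Hausdorffness of $\mathcal{M}_d(\ell)$, so no separate argument is needed there.
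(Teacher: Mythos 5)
Your proposal is correct and follows essentially the same route as the paper: rotate a rank-$\leq k$ configuration into $\R^k\times\{0\}$ for surjectivity, and observe that an element of $\SO(d)$ matching two configurations contained in $\R^k$ corresponds to an element of $\OO(k)$ (with the spare coordinates absorbing the sign), which is exactly why the quotient is $\mathcal{N}_k(\ell)$ rather than $\mathcal{M}_k(\ell)$. The one step to tighten is injectivity when the common rank $m$ is strictly less than $k$: the matching $B\in\SO(d)$ need not preserve $\R^k$ itself, only the $m$-dimensional spans of the two configurations, so one should (as the paper does) first normalize both spans to $\R^m\times\{0\}$ and then extend the restriction of $B$ to that span to an element of $\OO(k)$.
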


\begin{proof}
First note that if $x\in \mathcal{M}_d(\ell)$ has rank $k$, we can find an element $A\in \SO(d)$ such that $Ax_1,\ldots,Ax_n$ span $\R^k\times \{0\}\subset \R^d$. Hence $x$ is in the image of the natural map $\mathcal{M}_k(\ell) \to \mathcal{M}_d(\ell)$. Now assume that $x,y\in \mathcal{M}_k(\ell)$ have the same image in $\mathcal{M}_d(\ell)$. Then ${\rm rank}\,(x)={\rm rank}\,(y)=m \leq k$ for some $m$.

We can then assume that $x_1,\ldots,x_n$ span $\R^m\times\{0\}$ and also $y_1,\ldots,y_n$ span $\R^m\times \{0\}$. Now if there is an $A\in \SO(d)$ with $Ax_i=y_i$ for all $i=1,\ldots,n$, we get that $A$ keeps $\R^m$ invariant, and therefore $A|\in \OO(m)$. But this means that $x$ and $y$ represent the same element in $\mathcal{N}_k(\ell)$.
\end{proof}

In order to understand the local structure of $\mathcal{M}_d(\ell)$, choose $x\in \mathcal{M}_d(\ell)$ with ${\rm rank}\,(x)=k \leq d\leq n$, and represent this point by $(x_1,\ldots,x_n)$. Note that if $d=n$, we have $k < n$, as the coordinates of $x$ are linearly dependent.

Now rotate $x_1$ into position $e_1\in \R^d$. Let us assume that $k\geq 2$, which is always the case if $\ell$ is generic. Then there is another element not contained in $\R\times \{0\}$, and after reordering, we may assume it is $x_2$, now using a rotation from $\SO(d-1)$ (fixing the first coordinate), we can assume that $x_2\in S^1-S^0$. Repeating this, we can represent $x$ by an element $(x_1,\ldots,x_n)$ with
\begin{eqnarray*}
 x_1 & = & e_1 \\
 x_2 &\in & S^1 - S^0\\
&\vdots & \\
x_k & \in & S^{k-1} - S^{k-2}. 
\end{eqnarray*}
Since ${\rm rank}\,(x)= k$, we get that $x_{k+1},\ldots,x_n\in S^{k-1}\subset S^{d-1}$. If $k < n-1$, we can assume that the $x_{k+1},\ldots,x_n$ are not collinear: If they are, they cannot be multiples of $x_k$, since $\sum\ell_ix_i=0$ and $x_1,\ldots,x_{k-1}\in \R^{k-1}$. In that case we can just replace $x_k$ with $x_{k+1}$.

Also, if $k= n-1$, it follows that $x_k$ and $x_{k+1}$ are not collinear by the same argument.

We can therefore assume that after a permutation of coordinates we have $x_1=e_1$, $x_i \in S^{i-1}-S^{i-2}$ for $i=2,\ldots,k$, $x_{k+1},\ldots x_n \in S^{k-1}$ and $x_{n-1},x_n$ are not collinear. Furthermore, the group $\SO(d-k)$ fixes all $x_i$.

\begin{lemma}\label{localmanifold}
Let $\ell\in \R^n$ be a length vector with $n> d\geq 3$. Let $x\in \mathcal{M}_d(\ell)$ satisfy ${\rm rank}\,(x) \geq d-1$. Then $x$ has a neighborhood homeomorphic to $$\R^{(n-3)(d-1)-\frac{(d-2)(d-3)}{2}}.$$
\end{lemma}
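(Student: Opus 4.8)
The plan is to realise a neighbourhood of $x$ as the quotient of a smooth manifold by a free, proper action of a compact Lie group, and then to compute dimensions. I work with $E_d(\ell)=\{(x_1,\ldots,x_n)\in(S^{d-1})^n\mid\sum\ell_ix_i=0\}$, so that $\mathcal{M}_d(\ell)=E_d(\ell)/\SO(d)$, and I lift $x$ to a point $\hat x=(x_1,\ldots,x_n)\in E_d(\ell)$; note ${\rm rank}\,(\hat x)={\rm rank}\,(x)\geq d-1$.

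First I would show that $E_d(\ell)$ is a smooth submanifold of $(S^{d-1})^n$ of dimension $n(d-1)-d$ near $\hat x$. This reduces to checking that $\Psi\colon(S^{d-1})^n\to\R^d$, $\Psi(x_1,\ldots,x_n)=\sum\ell_ix_i$, is a submersion at $\hat x$: its differential sends $(v_1,\ldots,v_n)$, with $v_i\in T_{x_i}S^{d-1}=x_i^\perp$, to $\sum\ell_iv_i$, so its image is $\sum_i x_i^\perp$, and since ${\rm rank}\,(x)\geq d-1\geq 2$ there are two linearly independent vectors $x_i,x_j$ among them with $x_i^\perp+x_j^\perp=\R^d$. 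The same argument shows $\Psi$ is a submersion on the open set $\{{\rm rank}\geq 2\}\supset\{{\rm rank}\geq d-1\}$, so $E_d(\ell)$ is a smooth manifold of dimension $n(d-1)-d$ there.

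Next I would show that $\SO(d)$ acts freely on the open set $U=\{\,\hat y\in E_d(\ell)\mid{\rm rank}\,(\hat y)\geq d-1\,\}$, which is open because rank is lower semicontinuous. If $A\in\SO(d)$ fixes every coordinate of a point of $U$, then it fixes their span $W$ pointwise, and $\dim W\in\{d-1,d\}$; if $\dim W=d$ then $A=I$, while if $\dim W=d-1$ then $A$ acts on the line $W^\perp$ by $\det A=1$, so again $A=I$. Thus the action on $U$ is free, and it is proper since $\SO(d)$ is compact; $U$ is $\SO(d)$-invariant, contains $\hat x$, and its image in $\mathcal{M}_d(\ell)$ is an open neighbourhood of $x$ (the quotient map is open). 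By the quotient manifold theorem, $U/\SO(d)$ is a smooth manifold of dimension $n(d-1)-d-\dim\SO(d)=n(d-1)-d-\binom{d}{2}$, so $x$ has a neighbourhood homeomorphic to $\R^N$ with $N=n(d-1)-d-\binom{d}{2}$. A routine computation gives $N=n(d-1)-\tfrac{d(d+1)}{2}=(n-3)(d-1)-\tfrac{(d-2)(d-3)}{2}$, which is the claimed value.

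The one point needing genuine care is obtaining freeness, and hence the manifold structure, on a whole neighbourhood of $x$ rather than just at $x$: this is exactly where one uses that $\{{\rm rank}\geq d-1\}$ is open together with properness of the compact action in order to invoke the quotient manifold theorem. The submersion estimate, the stabiliser computation, and the final arithmetic are all routine.
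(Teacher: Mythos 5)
Your proof is correct, and it takes a genuinely different route from the paper. The paper argues by hand: it rotates a representative of $x$ into a normal form ($x_1=e_1$, $x_i\in S^{i-1}-S^{i-2}$ for $i\leq d-1$, the last two coordinates non-collinear), observes that nearby points have rank at least ${\rm rank}\,(x)$ and so admit the same normal form, and then counts the free parameters directly ($1+2+\cdots+(d-2)$ for the first block, $(d-1)(n-1-d)$ for the middle coordinates, and $d-2$ for the sphere of closures of the chain). You instead pass to the configuration space $E_d(\ell)$, verify the submersion condition to get smoothness of dimension $n(d-1)-d$ on the locus of rank $\geq 2$, check that the $\SO(d)$-stabilizer is trivial on the open locus of rank $\geq d-1$ (the $\det=1$ observation in the rank $d-1$ case is exactly the right point), and invoke the quotient manifold theorem; all the supporting facts (lower semicontinuity of rank, openness of the quotient map, the dimension arithmetic) check out. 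Your argument is cleaner and in fact proves slightly more, namely that the whole rank $\geq d-1$ locus is a smooth open manifold. What the paper's explicit parametrization buys is reusability: essentially the same coordinate count is extended verbatim in Lemma \ref{localstrat} to points of rank $k<d-1$, where the action is no longer free and one must exhibit the local model $\R^{N}\times((\R^{d-k})^{n-1-k})/\SO(d-k)$ explicitly; your quotient-manifold argument would there have to be replaced by a slice theorem and an identification of the slice representation, which is essentially what the paper's computation does by hand.
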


\begin{proof}
We can use the description for $x$ given above the lemma. That is, we can represent $x$ by an element $(x_1,\ldots,x_n)$ such that
\begin{eqnarray*}
 x_1 & = & e_1 \\
 x_2 &\in & S^1 - S^0\\
&\vdots & \\
x_{d-1} & \in & S^{d-2} - S^{d-3},
\end{eqnarray*}
and $x_d,\ldots,x_n\in S^{d-1}$ with $x_{n-1}$ and $x_n$ not collinear. If ${\rm rank}\,(x)=d-1$, we actually have all $x_d,\ldots,x_n\in S^{d-2}$, otherwise we can assume that $x_d\in S^{d-1}-S^{d-2}$.

In order to describe points near $x$, we have to consider points near $(x_1,\ldots,x_n)$, so we can let them vary in small discs $D^{d-1}$. But notice that for nearby points $y$ we always get ${\rm rank}\,(y)\geq {\rm rank}\,(x)$. After a rotation, we therefore get
\begin{eqnarray*}
 y_1 & = & e_1 \\
 y_2 &\in & S^1 - S^0\\
&\vdots & \\
y_{d-1} & \in & S^{d-2} - S^{d-3}. 
\end{eqnarray*}
Furthermore, $y_d,\ldots,y_n \in S^{d-1}$, and there are no further rotations possible. The point $y_2$ can therefore freely vary in a small disc $D^1$, $y_3$ in a small disc $D^2$, etc. The points $y_d,\ldots,y_n$ can vary in $S^{d-1}$, but only up to $y_{n-2}$ we can vary them freely. The last two $y_{n-1}, y_n$ have to connect the endpoint of the linkage given by the first $n-2$ elements to the origin. Since we can assume $y_{n-1}$ and $y_n$ to be not collinear, this is possible near $x$, and there is a $(d-2)$-dimensional sphere of possibilities. The dimension of the neighborhood is therefore
\[
 1+2+\cdots +(d-2) + (d-1)(n-1-d) +(d-2) 
\]
which is easily seen to be $(n-3)(d-1)-\frac{(d-2)(d-3)}{2}$.
\end{proof}

The following lemma was proven in \cite{fotloz} in the case $d=5$.

\begin{lemma}\label{localstrat}
Let $\ell\in \R^n$ be a length vector, and $2\leq k < d-1 \leq n-1$. If ${\rm rank}\,(x)=k$, then $x$ has a neighborhood homeomorphic to
\[
 \R^{\frac{(k-1)k}{2}+(n-k-1)(k-1)-1} \times ((\R^{d-k})^{n-1-k})/\SO(d-k),
\]
where $\SO(d-k)$ acts diagonally on $(\R^{d-k})^{n-1-k}$ in the standard way. Furthermore, if $k<m \leq d-2$, the points in this neighborhood corresponding to points in $\mathcal{N}_m(\ell)$ are in
\[
 \R^{\frac{(k-1)k}{2}+(n-k-1)(k-1)-1} \times ((\R^{m-k})^{n-1-k})/\OO(m-k).
\]

\end{lemma}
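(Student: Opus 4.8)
The plan is to extend the explicit normal-form analysis carried out just before Lemma \ref{localmanifold}. Represent $x$ by $(x_1,\ldots,x_n)$ with $x_1=e_1$, $x_i\in S^{i-1}-S^{i-2}$ for $2\leq i\leq k$, and $x_{k+1},\ldots,x_n\in S^{k-1}$ with $x_{n-1},x_n$ not collinear. As in that discussion, a point $y$ near $x$ can be rotated so that $y_1=e_1$, $y_i\in\R^i\times\{0\}$ near $x_i$ for $2\leq i\leq k$, and $y_{k+1},\ldots,y_n\in S^{d-1}$ near the $x_i$, and the only rotational freedom then left is the subgroup $\SO(d-k)$ fixing $e_1,\ldots,e_k$; moreover two such normalised configurations are $\SO(d)$-equivalent iff they are $\SO(d-k)$-equivalent. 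Hence the set $N$ of normalised configurations near $x$ is a slice, and a neighbourhood of $[x]$ in $\mathcal M_d(\ell)$ is homeomorphic to $N/\SO(d-k)$.

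Write $\R^d=U\oplus U^\perp$ with $U=\R^k\times\{0\}$, and for $i>k$ split $y_i=(u_i,v_i)$ with $u_i\in\R^k$, $v_i\in\R^{d-k}$, $|u_i|=\sqrt{1-|v_i|^2}$; for $i\leq k$ put $v_i=0$. The equation $\sum_i\ell_iy_i=0$ splits as $\sum_{i\leq k}\ell_iy_i+\sum_{i>k}\ell_iu_i=0$ in $U$ and $\sum_{i>k}\ell_iv_i=0$ in $U^\perp$, and $\SO(d-k)$ acts diagonally on $(v_{k+1},\ldots,v_n)$ and trivially on all the other coordinates. Eliminating $v_n$ through the $U^\perp$-equation, the $v$-coordinates range over a copy of $(\R^{d-k})^{n-1-k}$ with the diagonal $\SO(d-k)$-action. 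For each fixed value of the $v$'s, the remaining coordinates — the $y_i$ ($i\leq k$) in small discs of the spheres $S^{i-1}$, and the $u_i$ on spheres of radius $\sqrt{1-|v_i|^2}$ in $\R^k$, subject to the $U$-equation — form a smooth manifold of dimension $a:=\tfrac12(k-1)k+(n-k-1)(k-1)-1$; the relevant differential is onto $U$ because $x_{n-1},x_n$ are not collinear, exactly the kind of transversality count made in the proof of Lemma \ref{localmanifold}. Since this family of fibres depends on the $v$'s only through the radii $\sqrt{1-|v_i|^2}$, which are $\SO(d-k)$-invariant, and $\SO(d-k)$ acts trivially along the fibres, one obtains a $\SO(d-k)$-equivariant trivialisation $N\cong\R^a\times(\R^{d-k})^{n-1-k}$ with trivial action on the first factor. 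Therefore a neighbourhood of $[x]$ is homeomorphic to $\R^a\times\big((\R^{d-k})^{n-1-k}\big)/\SO(d-k)$, and since the second factor is an open cone, any neighbourhood of the cone point fills it out, which is the first assertion. (Alternatively: $E_d(\ell)$ is a manifold near $x$ because $\mathrm{rank}\,x\geq2$, and the slice theorem together with a short computation in $\mathfrak{so}(d)$ identifies the isotropy representation as $\R^a\oplus(\R^{d-k})^{n-1-k}$.)

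For the second assertion, recall that $\mathcal N_m(\ell)$ is the set of points of $\mathcal M_d(\ell)$ of rank at most $m$. Because $y_1,\ldots,y_k$ already span $U$, every nearby $y$ has $\mathrm{span}(y_1,\ldots,y_n)=U\oplus\mathrm{span}(v_{k+1},\ldots,v_n)$, hence $\mathrm{rank}\,y=k+\dim\mathrm{span}(v_{k+1},\ldots,v_n)$. So under the homeomorphism above $\mathcal N_m(\ell)$ meets the neighbourhood in $\R^a$ times the locus in $\big((\R^{d-k})^{n-1-k}\big)/\SO(d-k)$ where the $U^\perp$-coordinates span a subspace of dimension at most $m-k$. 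Finally, I would identify that locus with $\big((\R^{m-k})^{n-1-k}\big)/\OO(m-k)$: a tuple spanning at most $m-k$ dimensions can be rotated into $\R^{m-k}\subset\R^{d-k}$ since $m-k<d-k$; two such tuples lying in $\R^{m-k}$ are $\SO(d-k)$-equivalent precisely when they are $\OO(m-k)$-equivalent, since any isometry between their spans extends to $\OO(m-k)$ and the sign is absorbed by the at least two extra ambient dimensions; and the resulting continuous bijection is a homeomorphism by a cone argument on the compact links.

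The step I expect to be the main obstacle is showing that the neighbourhood splits as a product \emph{$\SO(d-k)$-equivariantly}, not just as a product of spaces — this requires the transversality input that the "sub-linkage with slowly varying sphere radii" is a smooth manifold of dimension $a$ over which the $v$-coordinates fibre $\SO(d-k)$-equivariantly trivially (equivalently, the representation-theoretic identification of the slice). A secondary technical point is promoting the continuous bijection of the last paragraph to a homeomorphism and confirming that the group appearing there is $\OO(m-k)$ rather than $\SO(m-k)$.
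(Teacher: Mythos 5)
Your proof is correct and follows essentially the same route as the paper's: normalise $x$ so that $y_1,\ldots,y_k$ span $\R^k\times\{0\}$, split $\R^d=\R^k\oplus\R^{d-k}$, and separate the local coordinates into a trivial factor of dimension $\frac{(k-1)k}{2}+(n-k-1)(k-1)-1$ and $n-1-k$ copies of the standard $\SO(d-k)$-representation, passing to $\OO(m-k)$ for the lower strata. The paper obtains the last copy of $\R^{d-k}$ from the $(d-2)$-sphere of closing configurations of $x_{n-1},x_n$ rather than by eliminating $v_n$ from the $U^\perp$-equation, and is terser about the equivariant trivialisation and the $\OO(m-k)$ identification, but these are differences of bookkeeping only.
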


\begin{proof}
The proof is similar to the proof of Lemma \ref{localmanifold}. The first $k$ points can vary in a $1+2+\cdots + k-1$-dimensional euclidean space, the next $n-2-k$ points can vary in $\R^{d-1}$, but with $\SO(d-k)$ acting diagonally on $(\R^{d-1})^{n-2-k}$. We have to think of each copy of $\R^{d-1}$ as a small disc neighborhood of a point $x_i\in S^{k-1}\times \{0\}\subset S^{d-1}$ sitting in $\R^d$, and $\SO(d-k)$ acts on the last $d-k$ variables in the usual way.

These $n-2-k$ points therefore produce a factor $(\R^{k-1})^{n-k-2}$ where there is no action, and a factor $(\R^{d-k})^{n-k-2}/\SO(d-k)$ where $\SO(d-k)$ acts diagonally, and in the usual way on each factor $\R^{d-k}$.

Finally, the points $x_{n-1}$ and $x_n$, which we can assume to be non-collinear, connect up the points $0\in \R^d$ and $y=\sum_{i=1}^{n-2}\ell_i x_i$. This gives rise to a sphere of dimension $d-2$. Since $y\in \R^k\times\{0\} \subset \R^d$, variations leading to points in $\R^k$ give rise to a sphere of dimension $S^{k-2}$, on which $\SO(d-k)$ acts trivially. This gives rise to another trivial factor $\R^{k-2}$ in the neighborhood of $x$. If we vary $x_{n-1}$ and $x_n$ within $\R^d$, we get another factor $\R^{d-k}$, on which $\SO(d-k)$ acts in the usual way. We therefore get another factor in the quotient.

To get points in $\mathcal{N}_m(\ell)$ we simply have to make sure the last $d-m$ coordinates stay $0$. We have to pass to $\OO(m-k)$, since $((\R^{m-k})^{n-1-k})/\SO(m-k)\to ((\R^{d-k})^{n-1-k})/\SO(d-k)$ is not injective.
\end{proof}

This implies that for $d\leq n-1$ the space $\mathcal{M}_d(\ell)$ carries the structure of a pseudo-manifold with stratification given by
\[
 \emptyset \subset \mathcal{N}_2(\ell) \subset \mathcal{N}_3(\ell) \subset \cdots \subset \mathcal{N}_{d-2}(\ell) \subset \mathcal{M}_d(\ell).
\]
We would like to give this pseudo-manifold a piecewise-linear structure. To see that this is possible, note that $\mathcal{C}_d(\ell)$ is a real-analytic manifold with $\SO(d-1)$ acting real-analytically. The submanifolds $\mathcal{C}_k(\ell)$ for $k < d$ are not $\SO(d-1)$-invariant, but the $\SO(d-1)$-orbits of these sets are easily seen to be subanalytic $\SO(d-1)$-invariant closed subsets of $\mathcal{C}_d(\ell)$. By \cite[Thm.B]{illman} $\mathcal{C}_d(\ell)$ can be given a $\SO(d-1)$-equivariant triangulation which gives $\mathcal{M}_d(\ell)$ a triangulation such that each $\mathcal{N}_k(\ell)$ is a subcomplex.

Let us define
\begin{eqnarray}\label{dimension_formula}
 \mathbf{d}_d^n&=& (n-3)(d-1)-\frac{(d-2)(d-3)}{2},
\end{eqnarray}
which is the dimension of $\mathcal{M}_d(\ell)$ for $n>d$ by Lemma \ref{localmanifold}. It then follows easily that the codimension of $\mathcal{N}_{d-k}(\ell)$ in $\mathcal{M}_d(\ell)$ is
\begin{eqnarray}\label{codimension_formula}
 \mathbf{c}_{d,k}^n & = & k(n-d) + \frac{k(k-1)}{2}
\end{eqnarray}
for $k=2,\ldots,d-2$.

\section{The intersection ring of a pseudomanifold}\label{section_interring}

Let $X$ be a compact, oriented $n$-dimensional pseudomanifold. In \cite{gormac,gorma2} Goresky-MacPherson define the intersection pairing
\[
 \cap\colon I^{\mathbf{p}}H_i(X)\times I^{\mathbf{q}}H_j(X) \to I^\mathbf{r}H_{i+j-n}(X)
\]
where $\mathbf{p}$, $\mathbf{q}$ and $\mathbf{r}$ are perversities such that $\mathbf{p}+\mathbf{q}\leq \mathbf{r}$, and show that it does not depend on the stratification of $X$. Furthermore, $I^{\mathbf{0}}H_n(X)$ contains a fundamental class $[X]$ which serves as a unit.

Now let $k,m>0$ and assume that $\mathbf{p}_0,\ldots,\mathbf{p}_k$ is a sequence of perversities such that for all $i,j\geq 0$ with $i+j\leq k$ we have $\mathbf{p}_i+\mathbf{p}_j\leq \mathbf{p}_{i+j}$. For $0\leq r \leq k$ define
\begin{eqnarray*}
 I_{\mathbf{p}_\cdot}H^{rm}(X)&=& I^{\mathbf{p}_r}H_{n-rm}(X).
\end{eqnarray*}
For $r+s\leq k$ the intersection pairing induces a multiplication
\[
 \cdot \colon I_{\mathbf{p}_\cdot}H^{rm}(X) \times I_{\mathbf{p}_\ast}H^{sm}(X) \to I_{\mathbf{p}_\cdot}H^{(r+s)m}(X)
\]
which turns
\begin{eqnarray*}
 I_{\mathbf{p}_\cdot}H^{\ast m}(X)&=&\bigoplus_{r=0}^k I_{\mathbf{p}_\ast}H^{rm}(X)
\end{eqnarray*}
into a graded ring with unit. We call this ring the \em intersection ring of $X$ with respect to $\mathbf{p}$ and $m$\em. If $\mathbf{p}_r$ is the $\mathbf{0}$-perversity for all $r\geq 0$, we may choose $k=\infty$.

The subring generated by the elements of $I_{\mathbf{p}_\ast}H^m(X)$ is also a graded ring with unit, and we call it the \em reduced intersection ring of $X$ with respect to $\mathbf{p}_\ast$ and $m$\em. We denote it by
\[
 I_{\mathbf{p}_\cdot}\tilde{H}^{\ast m}(X).
\]

If a pseudomanifold admits a stratification whose strata have only certain codimensions, a perversity only has to consider these codimensions. The relevant perversities for intersection homology of $\mathcal{M}_d(\ell)$ are therefore given by non-decreasing sequences of integers $(0,p_2,p_3,\ldots,p_{d-2})$ for which we have
\begin{eqnarray*}
 p_2 & \leq & 2(n-d)-1\\
 p_{i+1}-p_i & \leq & n-d+i
\end{eqnarray*}
for all $i=2,\ldots,d-3$. The top perversity is thus given by
\begin{eqnarray*}
 \mathbf{t}_n&=&(0,\mathbf{c}^n_{d,2}-2,\ldots,\mathbf{c}^n_{d,d-2}-2).
\end{eqnarray*}

We know from \cite{schuet} that $H_{\mathbf{d}_d^n}(\mathcal{M}_d(\ell))\cong \Z$, so there is a fundamental class that we can write as $[\mathcal{M}_d(\ell)]$.

Also, if $J\subset \{1,\ldots,n-1\}$ we can define a new length vector $\ell_J\in \R^{n-|J|}$ given by
\begin{eqnarray*}
 \ell_J&=&(\ell_{i_1},\ldots,\ell_{i_{n-1-|J|}},\ell_n+\ell_{j_1}+\cdots + \ell_{j_{|J|}}),
\end{eqnarray*}
where $J=\{j_1,\ldots,j_{|J|}\}$ and $\{i_1,\ldots,i_{n-1-|J|}\}$ denotes the complement of $J$ in $\{1,\ldots,n-1\}$. 

We then get a natural inclusion $\mathcal{M}_d(\ell_J) \hookrightarrow \mathcal{M}_d(\ell)$, and provided that $|J|\leq n-d-1$ we have a fundamental class $[\mathcal{M}_d(\ell_J)]\in H_{\mathbf{d}_d^{n-|J|}}(\mathcal{M}_d(\ell_J))$.

Note that if $J\cup\{n\}$ is long, we get $\mathcal{M}_d(\ell_J)=\emptyset$ and the fundamental class is just $0$.

\begin{definition}
 Let $0\leq k \leq n-d-1$. The perversity $\mathbf{p}_k$ is defined as
\begin{eqnarray*}
 \mathbf{p}_k &=& (0,2k,3k,\ldots,(d-2)k).
\end{eqnarray*}

\end{definition}

\begin{lemma}
 Let $\ell\in \R^n$ be a generic length vector and $J\subset \{1,\ldots,n-1\}$ satisfy $|J| \leq n-d-1$. Then $[\mathcal{M}_d(\ell_J)]$ represents a well-defined homology class
\begin{eqnarray*}
 [\mathcal{M}_d(\ell_J)]&\in& I^{\mathbf{p}_{|J|}}\!H_{\mathbf{d}_d^{n-|J|}}(\mathcal{M}_d(\ell)).
\end{eqnarray*}

\end{lemma}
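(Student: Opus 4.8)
The plan is to verify the allowability condition from Section~\ref{section_linkspaces} directly for the PL-chain $\xi$ that triangulates $\mathcal{M}_d(\ell_J)$, and similarly for $\partial\xi$. Since $\ell$ is generic, so is $\ell_J$ (merging links cannot create a median subset when none exists, as one checks from the definition of $\ell$-short), hence $\mathcal{M}_d(\ell_J)$ is itself a pseudomanifold of dimension $\mathbf{d}_d^{n-|J|}$ with the stratification $\mathcal{N}_2(\ell_J)\subset\cdots\subset\mathcal{N}_{d-2}(\ell_J)\subset\mathcal{M}_d(\ell_J)$ described after Lemma~\ref{localstrat}. Moreover $\mathcal{M}_d(\ell_J)$ has no boundary, so once we know $\xi$ is $\mathbf{p}_{|J|}$-allowable we automatically get that $\partial\xi=0$ is allowable, and $\xi$ defines a class in $I^{\mathbf{p}_{|J|}}H_\ast(\mathcal{M}_d(\ell))$; its independence of the triangulation is standard for intersection homology.

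The key step is therefore to control, for each $k=2,\ldots,d-2$, the dimension of $|\xi|\cap X^{n-k}$, where $X=\mathcal{M}_d(\ell)$ and $X^{n-k}$ is the codimension-$(\mathbf{c}^n_{d,k})$ stratum $\mathcal{N}_{d-k}(\ell)$. First I would identify this intersection: under the inclusion $\mathcal{M}_d(\ell_J)\hookrightarrow\mathcal{M}_d(\ell)$, a point of rank $\le d-k$ in the image corresponds to a point of $\mathcal{M}_d(\ell_J)$ of rank $\le d-k$, i.e.\ to $\mathcal{N}_{d-k}(\ell_J)\subset\mathcal{M}_d(\ell_J)$ (using Lemma~\ref{localstrat} to match strata, and noting the merge of coordinates does not change the rank of a configuration). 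Hence $|\xi|\cap X^{n-k}$ is exactly $\mathcal{N}_{d-k}(\ell_J)$, whose dimension is computed from the codimension formula \eqref{codimension_formula} applied with $n$ replaced by $n-|J|$: it has dimension $\mathbf{d}_d^{n-|J|}-\mathbf{c}^{n-|J|}_{d,k} = \mathbf{d}_d^{n-|J|} - k(n-|J|-d) - \tfrac{k(k-1)}{2}$.

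The allowability inequality to be checked is then, with $r=\mathbf{d}_d^{n-|J|}$,
\begin{eqnarray*}
 \mathbf{d}_d^{n-|J|} - k(n-|J|-d) - \tfrac{k(k-1)}{2} & \leq & \mathbf{d}_d^{n-|J|} - k(n-d) - \tfrac{k(k-1)}{2} + \mathbf{p}_{|J|}(k),
\end{eqnarray*}
which, since $\mathbf{p}_{|J|}$ evaluated on the codimension $\mathbf{c}^n_{d,k}$ equals $(k-? )$ --- more precisely, in the indexing by strata the perversity value attached to $\mathcal{N}_{d-k}(\ell)$ is the $k$-th entry of $(0,2|J|,3|J|,\ldots,(d-2)|J|)$, namely $k|J|$ --- reduces to $-k(n-|J|-d)\le -k(n-d)+k|J|$, i.e.\ $k|J|\le k|J|$. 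So the inequality holds with equality, and $\xi$ is $\mathbf{p}_{|J|}$-allowable. The main obstacle I anticipate is purely bookkeeping: matching the GM-perversity indexing (functions of the codimension $k=2,\ldots,n$) with the reduced indexing by strata-codimensions $\mathbf{c}^n_{d,k}$ used in the definition of $\mathbf{p}_k$, and confirming that $|\xi|\cap X^{n-k}$ is genuinely all of $\mathcal{N}_{d-k}(\ell_J)$ rather than a proper subset --- i.e.\ that every rank-$\le d-k$ configuration of $\ell_J$ actually lies in the closure of the triangulated chain. The genericity of $\ell_J$, which guarantees $\mathcal{M}_d(\ell_J)$ is a pseudomanifold and its fundamental class is carried by the top-dimensional simplices whose closure is all of $\mathcal{M}_d(\ell_J)$, is what makes this last point go through.
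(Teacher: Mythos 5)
Your proposal is correct and follows essentially the same route as the paper: identify $\mathcal{M}_d(\ell_J)\cap\mathcal{N}_{d-k}(\ell)$ with $\mathcal{N}_{d-k}(\ell_J)$ and verify the $\mathbf{p}_{|J|}$-allowability inequality via the dimension and codimension formulas \eqref{dimension_formula} and \eqref{codimension_formula}, observing that it holds with equality. The only superfluous worry is whether the intersection is \emph{all} of $\mathcal{N}_{d-k}(\ell_J)$: since allowability only requires an upper bound on $\dim(|\xi|\cap X^{n-k})$, a proper subset would only help.
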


\begin{proof}
 We need to show that
\begin{eqnarray*}
 \dim (\mathcal{M}_d(\ell_J) \cap \mathcal{N}_{d-k}(\ell)) & \leq & \dim \mathcal{M}_d(\ell_j) - {\rm co}\!\dim \mathcal{N}_{d-k}(\ell) + k|J|
\end{eqnarray*}
for all $k=2,\ldots,d-2$. Since $\mathcal{M}_d(\ell_J)\cap \mathcal{N}_{d-k}(\ell) = \mathcal{N}_{d-k}(\ell_J)$, this is equivalent to showing
\begin{multline*}
 (n-|J|-3)(d-k-1) - \frac{(d-k-2)(d-k-3)}{2} \,\,\,\leq \\
(n-|J|-3)(d-1) - \frac{(d-2)(d-3)}{2} - k(n-d) - \frac{k(k-1)}{2} + k|J|.
\end{multline*}
A straightforward calculation shows that this is indeed an equality.
\end{proof}

The relevant intersection ring for $\mathcal{M}_d(\ell)$ is obtained using $m=d-1$ and the perversity $\mathbf{p}_1$. To simplify notation and since we are mainly interested in the reduced intersection ring we will write
\[
I\!H^{\ast(d-1)}(\mathcal{M}_d(\ell)) 
\]
for the reduced intersection ring.

The relevant intersection homology groups are given by $I^{\mathbf{p}_k}\!H_{\mathbf{d}_d^{n-k}}(\mathcal{M}_d(\ell))$, and we have
\begin{eqnarray*}
 \mathbf{d}_d^{n-k}+\mathbf{d}_d^{n-j} - \mathbf{d}_d^n & = & \mathbf{d}_d^{n-(k+j)}
\end{eqnarray*}
and the perversities satisfy
\begin{eqnarray*}
 \mathbf{p}_j+\mathbf{p}_k& = & \mathbf{p}_{j+k}
\end{eqnarray*}
for $k,j = 0,\ldots,n-d-1$. Note that products with $j+k\geq n-d$ are considered as $0$.

The fundamental class $[\mathcal{M}_d(\ell)]\in I\!H^0(\mathcal{M}_d(\ell))$ is the unit of both the intersection ring and the reduced intersection ring, which follows immediately from \cite[Thm.1]{gormac}.

\begin{remark}
 For $d=3$ we can use the $\mathbf{0}$-perversity and the intersection ring consists of the cohomology ring $H^\ast(\mathcal{M}_3(\ell))$ which has been calculated by Hausmann and Knutson \cite{hauknu}.
\end{remark}

\section{A Morse function for linkage spaces}\label{section_morse}

In \cite{schuet} the function $F:\mathcal{C}_d(\ell)\to \R$ given by
\begin{eqnarray*}
 F(z_1,\ldots,z_{n-1})&=&p_1(z_{n-1}),
\end{eqnarray*}
where $p_1:\R^d\to \R$ is projection to the first coordinate, was shown to be a $\SO(d-1)$-invariant Morse-Bott function whose critical manifolds consist of
\begin{itemize}
 \item $\mathcal{C}_d(\ell^-)$ as the absolute minimum, $\ell^-=(\ell_1,\ldots,\ell_{n-2},\ell_n-\ell_{n-1})$.
 \item $\mathcal{C}_d(\ell^+)$ as the absolute maximum, $\ell^+=(\ell_1,\ldots,\ell_{n-2},\ell_n+\ell_{n-1})$.
 \item $S^{d-2}_J$, a $(d-2)$-dimensional sphere for every subset $J\subset \{1,\ldots,n-2\}$ for which $J\cup \{n\}$ is $\ell$-short and $J\cup \{n-1,n\}$ is $\ell$-long, whose index is $(n-3-|J|)(d-1)$.
\end{itemize}
Using induction, and noting \cite[\S 4]{schuet}, we can construct an $\SO(d-1)$-invariant Morse-Bott function $\bar{F}:\mathcal{C}_d(\ell)\to \R$ whose critical manifolds are all spheres of dimension $d-2$, which is perfect in the sense that the number of critical manifolds of index $k(d-1)$ agrees with the $(2k)$-th Betti number of the moduli space $\mathcal{M}_3(\ell)$, and there are no critical manifolds of other indices.

We can furthermore assume that different critical manifolds have different values under $\bar{F}$. Choosing a sequence of regular values $x_i$ such that the interval $(x_{i-1},x_i)$ contains exactly one critical value, we get a filtration
\[
 \emptyset \subset \mathcal{M}^0 \subset \mathcal{M}^1 \subset \cdots \subset \mathcal{M}^r = \mathcal{M}_d(\ell)
\]
by $\mathcal{M}^i=f^{-1}((-\infty,x_i])$, where $f:\mathcal{M}_d(\ell)\to \R$ is induced by $\bar{F}$.

Note that $\mathcal{M}^i$ is a pseudomanifold with boundary, and to understand the intersection homology of the pair $(\mathcal{M}^i,\mathcal{M}^{i-1})$, we need to understand the intersection homology of the normal bundle of the critical manifold $S^{d-2}$ relative to its boundary.

Let $N(S^{d-2})$ denote the normal bundle of the critical manifold $S^{d-2}\subset \mathcal{C}_d(\ell)$ and let $p\in S^{d-2}$. The normal space at $p$ is denoted by $N_p(S^{d-2})$.

\begin{lemma}
 $\SO(d-1)$ acts transitively on $S^{d-2}$, furthermore, for $p\in S^{d-2}$ the stabilizer subgroup is isomorphic to $\SO(d-2)$, and we have $N_p(S^{d-2})$ is $\SO(d-2)$-equivariantly homeomorphic to $(\R^{d-1})^{n-3}$, where $\SO(d-2)$ acts diagonally on the $n-3$ copies of $\R^{d-1}$, and the action of $\SO(d-2)$ on $\R^{d-1}$ fixes the first coordinate, and is the standard action on the remaining $d-2$ coordinates.
\end{lemma}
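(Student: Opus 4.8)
The plan is to work from the explicit description of the critical manifolds in \cite{schuet}. A Lagrange multiplier computation for $F$ shows that at a critical point $p=(z_1,\dots,z_{n-1})$ the vectors $z_1,\dots,z_{n-2}$ are all parallel to one common unit vector and $z_{n-1}$ lies in the plane spanned by that vector and $e_1$, and for a critical manifold of positive dimension this common vector is not parallel to $e_1$. After a rotation in $\SO(d-1)$ the critical manifold $S^{d-2}=S^{d-2}_J$ can thus be written as
\[
 \{\,(\varepsilon_1(ae_1+b\hat w),\dots,\varepsilon_{n-2}(ae_1+b\hat w),\,ce_1+c'\hat w)\ :\ \hat w\in S^{d-2}\subset e_1^{\perp}\,\},
\]
where $\varepsilon_i=\pm1$ are signs determined by $J$ and $a,b,c,c'$ are constants depending only on $\ell$ and $J$, with $b\neq 0$ and $(a,b)\neq\pm(c,c')$; the last inequality holds because otherwise all $z_i$ would be collinear with $ae_1+b\hat w\notin\R e_1$, contradicting $\sum\ell_iz_i\in\R e_1$. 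The critical manifolds introduced when $\bar F$ is modified near the minimum $\mathcal{C}_d(\ell^-)$ and the maximum $\mathcal{C}_d(\ell^+)$ again have all $z_i$ in a common $2$-plane through $e_1$; I treat the displayed situation, the remaining ones being analogous once the normal bundle of $\mathcal{C}_d(\ell^{\pm})$ inside $\mathcal{C}_d(\ell)$ is taken into account.

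Transitivity and the stabilizer are then immediate: since $\SO(d-1)$ fixes $e_1$ and acts in the standard way on $e_1^{\perp}\cong\R^{d-1}$, it sends the configuration labelled by $\hat w$ to the one labelled by $g\hat w$; as it acts transitively on the unit sphere of $e_1^{\perp}$ it acts transitively on $S^{d-2}$, and $g$ fixes the configuration labelled by $\hat w$ precisely when $g\hat w=\hat w$, so the stabilizer is the subgroup $\SO(d-2)$ of rotations of $\hat w^{\perp}\cap e_1^{\perp}$.

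To identify $N_p(S^{d-2})$ as an $\SO(d-2)$-representation, take $p$ to be the configuration with $\hat w=e_2$. Then the stabilizer is the group of rotations of $P^{\perp}:=\mathrm{span}(e_3,\dots,e_d)$, every $z_i$ lies in $P:=\mathrm{span}(e_1,e_2)$, and $\R^d=P\oplus P^{\perp}$ is an $\SO(d-2)$-invariant splitting, trivial on $P$ and standard on $P^{\perp}\cong\R^{d-2}$. This induces $\SO(d-2)$-invariant splittings of $T_p\mathcal{C}_d(\ell)=\{(\xi_i)\,:\,\xi_i\perp z_i,\ \sum\ell_i\xi_i=0\}$ and of $T_pS^{d-2}$ into $P$- and $P^{\perp}$-parts. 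On the $P$-side, each $\xi_i\cap P$ is confined to a line in $P$ (the orthogonal complement of $z_i$ in $P$), and the $P$-component of the closing relation $\sum\ell_i\xi_i=0$ is a linear map of rank $2$ (this is where $(a,b)\neq\pm(c,c')$ enters), leaving an $(n-3)$-dimensional space $A$ with trivial $\SO(d-2)$-action. The $P^{\perp}$-part of $T_p\mathcal{C}_d(\ell)$ is $B:=\{(\eta_i)\in(P^{\perp})^{n-1}\,:\,\sum\ell_i\eta_i=0\}\cong\R^{d-2}\otimes\ker(\ell\colon\R^{n-1}\to\R)$, that is $(\R^{d-2})^{n-2}$ with the diagonal standard action. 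Differentiating the parametrization by $\hat w$ at $\hat w=e_2$ shows $T_pS^{d-2}\subseteq B$, and in fact $T_pS^{d-2}=\R^{d-2}\otimes\R v_0$ for the line through $v_0=(\varepsilon_1b,\dots,\varepsilon_{n-2}b,c')\in\ker\ell$. Hence
\[
 N_p(S^{d-2})=A\oplus(B/T_pS^{d-2})\ \cong\ \R^{n-3}\ \oplus\ \bigl(\R^{d-2}\otimes(\ker\ell/\R v_0)\bigr),
\]
of dimension $(n-3)+(n-3)(d-2)=(n-3)(d-1)$, with trivial action on the first summand and diagonal standard action on the second. As $A$ and $\ker\ell/\R v_0$ are both $(n-3)$-dimensional trivial $\SO(d-2)$-representations, fixing a linear isomorphism between them identifies $N_p(S^{d-2})$ with $(\ker\ell/\R v_0)\otimes(\R\oplus\R^{d-2})=(\ker\ell/\R v_0)\otimes\R^{d-1}\cong(\R^{d-1})^{n-3}$, on which $\SO(d-2)$ acts diagonally, fixing the first coordinate of each $\R^{d-1}$ and acting standardly on the remaining $d-2$ coordinates; being linear, this isomorphism is in particular an $\SO(d-2)$-equivariant homeomorphism, as required.

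I expect the main work to be bookkeeping rather than conceptual. One must verify that the $P$-component of the closing relation has rank exactly $2$, so that $\dim A=n-3$ and not larger, and that $T_pS^{d-2}$ is captured precisely as the line $\R v_0$ inside $\ker\ell$ and not merely contained in $B$; these rely on the genericity input $b\neq 0$, $(a,b)\neq\pm(c,c')$ coming from $\sum\ell_iz_i\in\R e_1$. For the critical manifolds produced by the inductive modification of $\bar F$ near $\mathcal{C}_d(\ell^{\pm})$ one also has to check that the normal bundle of $\mathcal{C}_d(\ell^{\pm})$ in $\mathcal{C}_d(\ell)$ contributes exactly the factors of $\R^{d-1}$ missing because the sublinkage has fewer links, so that one again arrives at $(\R^{d-1})^{n-3}$ with the stated action.
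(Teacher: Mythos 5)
Your argument is correct and follows essentially the same route as the paper's proof: an explicit description of the critical spheres $S^{d-2}_J$ (which you derive via Lagrange multipliers rather than citing \cite{schuet}), followed by the $\SO(d-2)$-equivariant splitting of $T_p\mathcal{C}_d(\ell)$ into the $P$- and $P^{\perp}$-parts to identify the normal representation. The inductive step for the additional critical manifolds of $\bar F$ is deferred to the triviality of the normal bundle of $\mathcal{C}_d(\ell^{\pm})$ in exactly the same way the paper defers it to \cite[Lemma 3.3]{schuet}, so no gap relative to the paper's own level of detail.
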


\begin{proof}
 We start with the critical manifold $S_J$ from $F$, for which the statement is easy to see by the explicit description in \cite{schuet}. To get the same statement for the critical manifolds of $\bar{F}$ recall that $\bar{F}$ is build by induction. The critical submanifolds $\mathcal{C}_d(\ell^{\pm})$ of $F$ have trivial normal bundle \cite[Lemma 3.3]{schuet}, which gives an extra copy of $\R^{d-1}$ on which $\SO(d-2)$ acts in the described way. The map $\bar{F}$ can then be constructed so that the statement holds.
\end{proof}

Let $k_i(d-1)$ be the index of the critical manifold $S^{d-2}$ of $\bar{F}$ whose critical value is in $(x_{i-1},x_i)$. Then denote
\begin{eqnarray*}
 \mathcal{N}_i&=& (D^{d-1})^{k_i}\times (D^{d-1})^{n-3-k_i}/\SO(d-2),
\end{eqnarray*}
where $D^{d-1}\subset \R^{d-1}$ is the usual closed unit ball. It follows that $\mathcal{M}_i$ is homeomorphic to $\mathcal{M}_{i-1}\cup \mathcal{N}_i$ and with the excision properties for pseudomanifolds with boundary we get
\begin{eqnarray*}
 I^\mathbf{p}H_\ast(\mathcal{M}_i,\mathcal{M}_{i-1})&\cong & I^\mathbf{p} H_\ast(\mathcal{N}_i,\partial_-\mathcal{N}_i),
\end{eqnarray*}
where $\partial_-\mathcal{N}_i= \partial((D^{d-1})^{k_i})\times (D^{d-1})^{n-3-k_i}/\SO(d-2)$ for any perversity.

Let us denote $\mathcal{N}_i^-=(D^{d-1})^{k_i}/\SO(d-2)$ and $\partial \mathcal{N}_i^-=\partial((D^{d-1})^{k_i})/\SO(d-2)$. The obvious inclusion $\mathcal{N}_i^-\subset \mathcal{N}_i$ induces a homotopy equivalence of pairs $(\mathcal{N}_i^-,\partial \mathcal{N}_i^-)\simeq (\mathcal{N}_i,\partial_- \mathcal{N}_i)$, but this does not induce an isomorphism on intersection homology in general. In fact, $\mathcal{N}_i^-$ is not a pseudomanifold for $k_i\leq d-3$.

However, if $\mathcal{N}_i^-$ is a pseudomanifold (with boundary), its dimension is $\mathbf{d}_d^{k_i+3}$, and
\begin{eqnarray*}
 I^{\mathbf{0}}H_\ast(\mathcal{N}_i^-,\partial \mathcal{N}_i^-) &\cong & I^{\mathbf{t}}H_{\mathbf{d}_d^{k_i+3}-\ast}(\mathcal{N}_i^-)
\end{eqnarray*}
by Lefschetz duality, and since the latter is just ordinary homology of a contractible space, we have
\begin{eqnarray*}
 I^{\mathbf{0}}H_{\mathbf{d}_d^{k_i}}(\mathcal{N}_i^-,\partial \mathcal{N}_i^-) &\cong & \Z
\end{eqnarray*}
and all other groups are trivial. Since we are interested in $\Z$ coefficients, we have to be slightly careful with Lefschetz duality and torsion. To see that no torsion occurs, we use \cite[Cor.4.4.3]{fried2}, note that since we use the top perversity, the condition of being locally $(\mathbf{p},\Z)$-torsion free is trivial. Also, \cite[Cor.4.4.3]{fried2} is stated for Poincar\'e duality, but because of the way Lefschetz duality is derived from Poincar\'e duality in \cite{fried2}, the result also holds for Lefschetz duality.

The inclusion $\mathcal{N}_i^-\subset \mathcal{N}_i$ is stratum preserving, but the codimensions of the strata are different. In particular, the inclusion is not placid in the sense of \cite[\S 4.8]{kirwoo}. In order to get a homomorphism between intersection homology groups of $(\mathcal{N}_i^-,\partial \mathcal{N}_i^-)$ and $(N_i,\partial_-\mathcal{N}_i)$ we have to vary the perversities for them. The following lemma gives a criterion for obtaining such a homomorphism; its proof is analogous to the proof of \cite[Ex.4.8.2]{kirwoo}.

\begin{lemma}\label{pseudofunct}
 Let $X$, $Y$ be pseudomanifolds, $\mathbf{p}$, $\mathbf{q}$ perversities, and $f:X\to Y$ a stratum preserving map for some stratifications of $X$ and $Y$. Then $f$ induces a map on homology
\[
 f_\ast:I^\mathbf{p}H_\ast(X)\to I^\mathbf{q}H_\ast(Y)
\]
provided that
\begin{eqnarray*}
 \mathbf{q}(k)-k & \geq & \mathbf{p}(m_k)-m_k
\end{eqnarray*}
where $m_k$ is the minimal codimension of a stratum $S$ with $f(S)$ in a stratum of codimension $k$.
\end{lemma}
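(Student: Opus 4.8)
The plan is to show that the chain-level pushforward $f_\#$ carries $\mathbf{p}$-allowable PL-chains on $X$ to $\mathbf{q}$-allowable PL-chains on $Y$, so that it restricts to a chain map $I^\mathbf{p}C_\ast(X)\to I^\mathbf{q}C_\ast(Y)$ and hence induces the asserted $f_\ast$ on homology. Following the argument of \cite[Ex.4.8.2]{kirwoo}, I would first pass to compatible subdivisions of the given triangulations of $X$ and $Y$ with respect to which $f$ is simplicial and each stratum is a subcomplex; this uses that $f$ is PL and that PL intersection homology of a pseudomanifold is independent of the (admissible) triangulation. Since $\partial f_\#\xi=f_\#\partial\xi$, it suffices to verify the allowability condition for the support of $f_\#\xi$ itself.

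So let $\xi\in I^\mathbf{p}C_r(X)$ and let $T$ be a stratum of $Y$ of codimension $k$. Because $f$ is stratum preserving, $|f_\#\xi|\cap T\subseteq f\bigl(|\xi|\cap f^{-1}(T)\bigr)$, and $f^{-1}(T)$ is a union of strata $S$ of $X$, each of codimension $c_S\geq m_k$ by the definition of $m_k$ (if no such $S$ exists the intersection is empty and there is nothing to check). Since a simplicial map does not increase dimension, we get
\[
 \dim\bigl(|f_\#\xi|\cap T\bigr)\ \leq\ \max_{S}\dim\bigl(|\xi|\cap S\bigr)\ \leq\ \max_{S}\bigl(r-c_S+\mathbf{p}(c_S)\bigr),
\]
where $S$ ranges over the strata of $X$ contained in $f^{-1}(T)$ and the last inequality is $\mathbf{p}$-allowability of $\xi$.

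The key point is that $c\mapsto\mathbf{p}(c)-c$ is non-increasing, which is exactly the perversity inequality $\mathbf{p}(c+1)\leq\mathbf{p}(c)+1$. Hence $\mathbf{p}(c_S)-c_S\leq\mathbf{p}(m_k)-m_k$ for every $S$ above, and combining this with the hypothesis $\mathbf{q}(k)-k\geq\mathbf{p}(m_k)-m_k$ gives $r-c_S+\mathbf{p}(c_S)\leq r-k+\mathbf{q}(k)$. Therefore $\dim(|f_\#\xi|\cap T)\leq r-k+\mathbf{q}(k)$, i.e. $f_\#\xi$ is $\mathbf{q}$-allowable; the same estimate applied to $\partial\xi$ shows $f_\#$ restricts to a chain map $I^\mathbf{p}C_\ast(X)\to I^\mathbf{q}C_\ast(Y)$, which induces $f_\ast$.

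The only real subtlety --- and the place where I would be most careful --- is the PL bookkeeping in the first step: arranging simultaneous triangulations making $f$ simplicial with all strata subcomplexes, and invoking independence of PL intersection homology from the triangulation so that $f_\ast$ is genuinely well defined on $I^\mathbf{p}H_\ast(X)$ and $I^\mathbf{q}H_\ast(Y)$ rather than on some chain complex depending on a choice. Once that is in place, the allowability estimate reduces to the monotonicity of $\mathbf{p}(c)-c$ and is immediate.
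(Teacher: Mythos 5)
Your argument is correct and is exactly the paper's approach: the paper gives no details beyond the remark that the proof is analogous to \cite[Ex.4.8.2]{kirwoo} (the placid-map case), and your chain-level verification --- pushing forward allowable chains, bounding $\dim(|f_\#\xi|\cap T)$ stratum by stratum via $c_S\geq m_k$, and using the monotonicity of $c\mapsto\mathbf{p}(c)-c$ together with the hypothesis $\mathbf{q}(k)-k\geq\mathbf{p}(m_k)-m_k$ --- is precisely the intended adaptation of that exercise. The PL bookkeeping you flag is the same issue the paper leaves implicit, so there is nothing further to add.
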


Therefore inclusion induces a map on intersection homology by increasing the perversity for $(\mathcal{N}_i,\partial_-\mathcal{N}_i)$, that is, we have a well defined homomorphism
\[
 i_\ast:I^\mathbf{0}H_\ast(\mathcal{N}_i^-,\partial \mathcal{N}_i^-)\to I^{\mathbf{p}_{n-3-k_i}}H_\ast(\mathcal{N}_i,\partial_-\mathcal{N}_i).
\]

In the next section we show that this is indeed an isomorphism.

\section{Intersection homology of Morse data}\label{section_data}

We modify our notation, for non-negative integers $m,k$ with $m\geq k$ let
\begin{eqnarray*}
 \mathcal{N}^{m,k} &=& ((D^{d-1})^k\times (D^{d-1})^{m-k})/\SO(d-2),\\
 \partial_-\mathcal{N}^{m,k} &=& (\partial((D^{d-1})^k)\times (D^{d-1})^{m-k})/\SO(d-2).
\end{eqnarray*}
The stratification is given by
\[
 \emptyset \subset \mathcal{N}^{m,k}_2 \subset \cdots \subset \mathcal{N}^{m,k}_{d-2} \subset \mathcal{N}^{m,k}
\]
where $\mathcal{N}^{m,k}_c$ is the image of $((D^{c-1})^k\times (D^{c-1})^{m-k})/\SO(c-2)$ in $\mathcal{N}^{m,k}$ for all $c=2,\ldots, d-2$.

If $l$ is another non-negative integer with $l\leq m-k$ and $m-l \geq d-2$ we have an inclusion of pairs of pseudomanifolds with boundary
\begin{eqnarray*}
 (\mathcal{N}^{m-l,k},\partial_-\mathcal{N}^{m-l,k})&\subset & (\mathcal{N}^{m,k},\partial_-\mathcal{N}^{m,k})
\end{eqnarray*}
which adds zeros into the extra coordinates. Let $s\geq 0$ with $s+l\leq m-d+2$. The inclusion then induces a map on intersection homology 
\[
 i_\ast\colon I^{\mathbf{p}_s}H_\ast(\mathcal{N}^{m-l,k},\partial_-\mathcal{N}^{m-l,k}) \to I^{\mathbf{p}_{s+l}}H_\ast(\mathcal{N}^{m,k},\partial_-\mathcal{N}^{m,k})
\]
by Lemma \ref{pseudofunct}. This is an isomorphism.

\begin{lemma}\label{switch_perv}
 Let $d,m,l,k,s$ be non-negative integers with $l\leq m-k$, $m-l\geq d-2$ and $s\leq m-l-d+2$. Then
\begin{eqnarray*}
 I^{\mathbf{p}_s}H_\ast(\mathcal{N}^{m-l,k},\partial_-\mathcal{N}^{m-l,k}) & \cong &
I^{\mathbf{p}_{s+l}}H_\ast(\mathcal{N}^{m,k},\partial_-\mathcal{N}^{m,k})
\end{eqnarray*}
and the isomorphism is induced by inclusion.
\end{lemma}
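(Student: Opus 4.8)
The plan is to reduce the statement to a local (fibrewise) computation over the quotient factor, so that the inclusion of pairs can be analyzed one $D^{d-1}$-coordinate at a time. First I would set up the comparison carefully: both sides are intersection homology of Morse data of the form (disc-bundle factor)$\times$(product of $D^{d-1}$'s modulo $\SO(d-2)$), relative to the part of the boundary coming from the first $k$ ``attaching'' discs. The key point is that on the source $\mathcal{N}^{m-l,k}$ the stratum $\mathcal{N}^{m-l,k}_c$ has codimension $\mathbf{c}^{m-l+3}_{d,d-c}$ (computed as in \eqref{codimension_formula} with $n$ replaced by $m-l+3$), while on the target $\mathcal{N}^{m,k}$ the corresponding stratum has codimension $\mathbf{c}^{m+3}_{d,d-c}$, which is larger by exactly $(d-c)l$. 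This is precisely the discrepancy that the perversity shift $\mathbf{p}_s \rightsquigarrow \mathbf{p}_{s+l}$ is designed to absorb, since $\mathbf{p}_{s+l}(\mathbf{c}^{m+3}_{d,d-c}) - \mathbf{p}_s(\mathbf{c}^{m-l+3}_{d,d-c}) = (d-c)l$ by the definition $\mathbf{p}_k=(0,2k,3k,\ldots,(d-2)k)$ together with $\mathbf{c}^{m+3}_{d,d-c} - \mathbf{c}^{m-l+3}_{d,d-c} = (d-c)l$. So at the level of the allowability inequality the map $i_\ast$ of Lemma \ref{pseudofunct} is an equality on the nose, not merely an inequality; this is the reason to expect an isomorphism rather than just a map.

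Next I would prove the isomorphism by induction on $l$, reducing immediately to the case $l=1$. For $l=1$ the inclusion $\mathcal{N}^{m-1,k}\hookrightarrow \mathcal{N}^{m,k}$ adds a single extra $D^{d-1}$-coordinate carrying a diagonal $\SO(d-2)$-action. I would use a Mayer--Vietoris / cone-formula argument: the extra factor $D^{d-1}$ with the $\SO(d-2)$-action (fixing the first coordinate, standard on the remaining $d-2$) contributes, after passing to the quotient, a cone-like structure whose apex is the added-in stratum. Concretely, writing the last factor as $D^1 \times D^{d-2}$ and taking the quotient of $D^{d-2}$ by $\SO(d-2)$, which is an interval, one sees $\mathcal{N}^{m,k}$ fibred over $\mathcal{N}^{m-1,k}\times[0,1]$ with the singular locus of the new stratum sitting over $\{0\}$; the relative intersection homology of the new cone direction is concentrated in the expected degree by the intersection-homology cone formula (\cite{gormac, gorma2}, or \cite[\S 4]{kirwoo}), precisely when the perversity is increased from $\mathbf{p}_s$ to $\mathbf{p}_{s+1}$, and the hypothesis $s+l \le m-d+2$ guarantees we stay strictly below the top perversity so that the cone formula kills exactly what we want and nothing more. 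A spectral-sequence or long-exact-sequence chase in the pair then upgrades this fibrewise statement to the claimed isomorphism, with the map being the inclusion-induced one by naturality.

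The main obstacle I anticipate is the bookkeeping of perversities across all the strata simultaneously: one must check that the single perversity shift $\mathbf{p}_s\to\mathbf{p}_{s+1}$ is exactly right for every codimension $c$ at once (so that the cone formula neither loses classes in low strata nor introduces spurious ones in high strata), and that the various stratifications of $\mathcal{N}^{m-1,k}$, $\mathcal{N}^{m,k}$, and the intermediate $[0,1]$-parametrized space are compatible enough that Lemma \ref{pseudofunct} applies at each inductive stage with an equality in the allowability condition. A secondary technical point is verifying that $\mathcal{N}^{m,k}$ genuinely is a pseudomanifold with boundary of the asserted dimension and that $\partial_-\mathcal{N}^{m,k}$ has a collar, so that the excision and Lefschetz-duality machinery of \cite[\S 4]{fried2} is legitimately available; this follows from Lemma \ref{localstrat} applied to the model $((\R^{d-k})^{n-1-k})/\SO(d-k)$-type factors, but it needs to be stated. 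Once the $l=1$ case is in hand, the general case is immediate by composing the isomorphisms $I^{\mathbf{p}_s}H_\ast(\mathcal{N}^{m-l,k},\partial_-)\cong I^{\mathbf{p}_{s+1}}H_\ast(\mathcal{N}^{m-l+1,k},\partial_-)\cong\cdots\cong I^{\mathbf{p}_{s+l}}H_\ast(\mathcal{N}^{m,k},\partial_-)$, and functoriality of $i_\ast$ identifies the composite with the inclusion-induced map.
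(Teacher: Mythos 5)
Your opening observation is correct and matches the reason the inclusion induces a map at all: the codimension of the stratum indexed by $c$ jumps by $(d-c)l$ while $\mathbf{p}_{s+l}-\mathbf{p}_s$ supplies exactly $(d-c)l$, so the allowability defect $\mathbf{p}(\cdot)-\mathrm{codim}$ is preserved on the nose. The reduction to $l=1$ is also the paper's route. But the core of your $l=1$ argument rests on a picture that is not accurate. You assert that $\mathcal{N}^{m,k}$ is ``fibred over $\mathcal{N}^{m-1,k}\times[0,1]$'' with the new $D^{d-1}$-factor contributing a cone direction whose quotient by $\SO(d-2)$ is an interval. Because the $\SO(d-2)$-action is \emph{diagonal}, the forgetful projection $r\colon\mathcal{N}^{m,k}\to\mathcal{N}^{m-1,k}$ has fibre $D^{d-1}$ modulo only the \emph{residual stabilizer} of the first $m-1$ coordinates: over the open stratum that stabilizer is trivial and the fibre is all of $D^{d-1}$, and only over deeper strata does a nontrivial group act on the last factor. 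So there is no uniform cone formula to apply fibrewise. Worse, the real difficulty the lemma has to overcome is precisely that $r$ is \emph{not} stratum-preserving: a point whose first $m-1$ coordinates have rank $c-1$ but whose last coordinate sticks out of $D^{c-1}$ lies in $\mathcal{N}^{m,k}_{c+1}\setminus\mathcal{N}^{m,k}_c$ yet projects into $\mathcal{N}^{m-1,k}_c$, and this happens at every level $c$, not just at the deepest stratum. Your proposal never confronts this, and the concluding ``spectral-sequence or long-exact-sequence chase'' is not a proof of surjectivity and injectivity of the inclusion-induced map.

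The paper's device, which is the missing idea here, is to \emph{refine the stratification} of $\mathcal{N}^{m,k}$ by inserting the intermediate strata $\mathcal{N}^{m,k}_{c,c+1}=r^{-1}(\mathcal{N}^{m-1,k}_c)$, which have dimension $\dim\mathcal{N}^{m,k}_c+1$; with respect to this finer stratification $r$ \emph{is} stratum-preserving. One then uses topological invariance of intersection homology to replace $\mathbf{p}_{s+1}$ by a perversity $\mathbf{p}_{s+1}'$ taking the value $\mathbf{p}_{s+1}(c)-1$ on the new strata, checks that the codimension shift $\mathbf{c}^{m+3}_{d,d-c}-1=\mathbf{c}^{m+2}_{d,d-c}+c-1$ makes Lemma \ref{pseudofunct} apply to $r$ with equality, and so obtains an explicit $r_\ast$ inverse to $i_\ast$; the stratum-preserving deformation retraction from $\mathrm{id}$ to $i\circ r$ shows the two maps are mutually inverse. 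If you want to salvage your write-up, replace the cone-formula paragraph with this refinement-of-stratification argument (or supply an honest local computation of $I^{\mathbf{p}}H_\ast$ of the pair of fibres of $r$ over each stratum, stabilizer by stabilizer, which amounts to the same bookkeeping).
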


The conditions on the integers are to ensure that we do get pseudomanifolds and perversities in the sense of \cite{gormac}.

\begin{proof}
 There is an obvious retraction $r:(\mathcal{N}^{m,k},\partial_-\mathcal{N}^{m,k}) \to (\mathcal{N}^{m-l,k},\partial_-\mathcal{N}^{m-l,k})$ induced by projection, but this retraction does not preserve the stratification.

Therefore let us assume that $l=1$. In that case $r^{-1}(\mathcal{N}^{m-1,k}_c)\subset \mathcal{N}^{m,k}_{c+1}$, and we define
\begin{eqnarray*}
 \mathcal{N}^{m,k}_{c,c+1} &=& r^{-1}(\mathcal{N}^{m-1,k}_c)
\end{eqnarray*}
for all $c=2,\ldots,d-2$. We then have the stratification
\[
 \emptyset \subset \mathcal{N}^{m,k}_2 \subset \mathcal{N}^{m,k}_{2,3} \subset \mathcal{N}^{m,k}_3 \subset \cdots \subset \mathcal{N}^{m,k}_{d-2}\subset \mathcal{N}^{m,k}_{d-2,d-1}\subset \mathcal{N}^{m,k}
\]
and it is easy to check that the retraction is stratum preserving using this stratification. Notice that
\begin{eqnarray*}
 \dim \mathcal{N}^{m,k}_{c,c+1}&=&\dim \mathcal{N}^{m,k}_c+1.
\end{eqnarray*}
To get an appropriate perversity $\mathbf{p}_{s+1}'$, we need entries for each $\mathcal{N}^{m,k}_{c,c+1}$ which can be at most one less than the entry for $\mathcal{N}^{m,k}_c$, for all $c=2,\ldots,d-2$. We denote these entries by $\mathbf{p}_{s+1}'(c,c+1)$ and set them to be
\begin{eqnarray*}
 \mathbf{p}_{s+1}'(c,c+1)&=& \mathbf{p}_{s+1}(c) - 1 \,\,\,=\,\,\, c(s+1) - 1.
\end{eqnarray*}
The other entries are the same:
\begin{eqnarray*}
 \mathbf{p}_{s+1}'(c)&=& \mathbf{p}_{s+1}(c)
\end{eqnarray*}
Since intersection homology does not depend on the stratification by \cite{gorma2}, we get
\begin{eqnarray*}
 I^{\mathbf{p}_{s+1}}H_\ast(\mathcal{N}^{m,k},\partial_-\mathcal{N}^{m,k}) & \cong &
I^{\mathbf{p}_{s+1}'}H_\ast(\mathcal{N}^{m,k},\partial_-\mathcal{N}^{m,k}).
\end{eqnarray*}
We need to check the conditions of Lemma \ref{pseudofunct} to get an induced map
\[
 r_\ast\colon I^{\mathbf{p}_{s+1}'}H_\ast(\mathcal{N}^{m,k},\partial_-\mathcal{N}^{m,k}) \to I^{\mathbf{p}_s}H_\ast(\mathcal{N}^{m-1,k},\partial_-\mathcal{N}^{m-1,k}).
\]
Note that the strata of minimal codimension mapping into $\mathcal{N}^{m-1,k}_c-\mathcal{N}^{m-1,k}_{c-1}$ are $\mathcal{N}^{m,k}_{c,c+1}-\mathcal{N}^{m,k}_c$. The codimension of $\mathcal{N}^{m-1,k}_c-\mathcal{N}^{m-1,k}_{c-1}$ is given by
\begin{eqnarray*}
 \mathbf{c}_{d,d-c}^{m+2} &=& (d-c)(m+2-d)+\frac{(d-c)(d-c-1)}{2}
\end{eqnarray*}
using (\ref{codimension_formula}). Therefore, the codimension of $\mathcal{N}^{m,k}_{c,c+1}-\mathcal{N}^{m,k}_c$ is $\mathbf{c}_{d,d-c}^{m+3}-1 = \mathbf{c}_{d,d-c}^{m+2}+c-1$.

It follows that
\begin{eqnarray*}
 \mathbf{p}_s(c) - \mathbf{c}_{d,d-c}^{m+2} & = & \mathbf{p}_{s+1}'(c,c+1) - (\mathbf{c}_{d,d-c}^{m+2}+c-1)
\end{eqnarray*}
which fits exactly the condition needed for Lemma \ref{pseudofunct} to apply. To see that $r_\ast$ is the inverse isomorphism for $i_\ast$, notice that the obvious strong deformation retraction between the identity and $i\circ r$ is also stratum preserving, and can therefore be used to construct the isomorphism, compare \cite[Prop.2.1]{friedm}.

This finishes the case $l=1$. For $l>1$ simply iterate this argument $l$ times.
\end{proof}

We saw in the previous section how to calculate $I^\mathbf{0}H_\ast(\mathcal{N}^{k,k},\partial_-\mathcal{N}^{k,k})$ using ordinary homology and cohomology. We will also need calculations for other perversities.

To simplify notation, write $\mathcal{N}^k=\mathcal{N}^{k,k}$ and $\partial\mathcal{N}^k=\partial_-\mathcal{N}^{k,k}$. In \cite{schuet} a relative CW-complex structure for $(\mathcal{N}^k,\partial\mathcal{N}^k)$ was given and used to calculate rational homology groups. We want to use this structure to also do intersection homology calculations. However, one has to be careful with using arbitrary CW-decompositions for intersection homology calculations, compare \cite[Appendix]{macvil}. We therefore repeat the construction of the CW-structure and justify its use for intersection homology calculations.

Recall that $\mathcal{N}^k=(D^{d-1})^k/\SO(d-2)$ where $\SO(d-2)$ acts on $D^{d-1}$ as the subgroup of $\SO(d-1)$ which fixes the first coordinate. A typical element of $\mathcal{N}^k$ is represented by $(x_1,\ldots,x_k)$ with each $x_i\in D^{d-1}$. Using an element of $\SO(d-2)$ we can assume that $x_1\in D^2\times \{0\}\subset D^{d-1}$ with the second coordinate non-negative. Let us introduce the notation
\begin{eqnarray*}
 D^c_+&=&\{(y_1,\ldots,y_c,0,\ldots,0)\in D^{d-1}\,|\, y_c\geq 0\},
\end{eqnarray*}
for $c=2,\ldots,d-2$, so that $x_1\in D^2_+$. We will also think of $D^c\subset D^{d-1}$ for all $c=1,\ldots,d-2$, occupying the first $c$ coordinates.

Using an element of $\SO(d-3)$, which is understood to fix the first two coordinates of $D^{d-1}$, we can assume that $x_2\in D^3_+$. Continuing this way we can represent the element of $\mathcal{N}^k$ by
\[
 (x_1,\ldots,x_k)\in D^2_+\times D^3_+ \times \cdots \times D^{d-2}_+ \times (D^{d-1})^{k-(d-3)}.
\]
To get a relative CW-structure for $(\mathcal{N}^k,\partial\mathcal{N}^k)$ we start with a $k$-cell $(D^1)^k$, which gives the cell structure for $\mathcal{N}^k_2$. To cover the elements of $\mathcal{N}^k_3$, we need cells of the form $(D^1)^l\times D^2_+\times (D^2)^{k-1-l}$ for $l=0,\ldots,k-1$.

Continuing, the cells needed for $\mathcal{N}^k_c$, $c=2,\ldots,d-2$, are of the form
\[
 (D^1)^{l_1}\times D^2_+\times (D^2)^{l_2}\times \cdots \times D^{c-1}_+\times (D^{c-1})^{k-(c-2)-l_1-\cdots - l_{c-2}},
\]
and to cover $\mathcal{N}^k-\mathcal{N}^k_{d-2}$ we require the cells
\[
 (D^1)^{l_1}\times D^2_+\times (D^2)^{l_2}\times \cdots \times D^{d-2}_+\times (D^{d-1})^{k-(d-3)-l_1-\cdots - l_{d-3}}.
\]
Writing $D^l$ as a $(d-2)$-dimensional column vector $\begin{pmatrix}
                                                       \ast \\ 0 
                                                      \end{pmatrix}$ with the entry $\ast$ used $(l-1)$ times, and $D^l_+$ as $\begin{pmatrix}
\ast \\ + \\0 \end{pmatrix}$ with the entry $\ast$ used $(l-2)$ times, we can write each cell as a $(d-2)\times k$ matrix with entries $0$, $\ast$ and $+$, compare \cite[\S 5]{schuet}. Note that the $k$-dimensional cell $(D^1)^k$ is represented by the zero matrix.

The boundary operator of the corresponding chain complex $C_\ast(\mathcal{N}^k,\partial\mathcal{N}^k)$ has been described explicitly in \cite{schuet}. Roughly, on the level of matrices it is obtained by summing over all matrices obtained by replacing $+$ with $0$, and coefficients either $0$ or $\pm 2$, where
$\begin{array}{cc}
 + & \ast \\ 0 & \ast  
 \end{array}$
always turns into $0$, and
$\begin{array}{cc}
  + & \ast \\ 0 & 0
 \end{array}$
turns into $\varepsilon\cdot \begin{array}{cc}0 & + \\ 0 & 0 \end{array}$ with $\varepsilon\in \{0,\pm 2\}$.

This is justified as $D^l_+\times D^l$ has as part of its boundary $D^{l-1}\times D^l$ which can be thought of as $D^{l-1}\times D^l_+\cup D^{l-1}\times D^l_-$. Using an element of $\SO(d-2)$ we can map $D^{l-1}\times D^l_-$ to $D^{l-1}\times D^l_+$, but this map affects the orientations of other discs, so that on the level of the chain complex the boundary contribution of the cell $D^{l-1}\times D^l_+$ may be $0$ or $\pm 2$.

Note that a cell $(D^1)^{l_1}\times D^2_+ \times \cdots \times (D^{d-1})^{k-L}$ intersected with $\mathcal{N}^k_c$ is another cell $(D^1)^{l_1}\times D^2_+\times \cdots \times (D^{c-1})^{k-L'}$, so the CW-structure could be considered `flag-like'. We use this to subdivide the cell structure to a flag-like triangulation without changing the chain homotopy type of the corresponding intersection homology complex.

We begin by subdividing $D^{d-1}$. For $i=1,\ldots,d-1$ let $\varepsilon_i\in \{-,0,+\}$ and
\begin{eqnarray*}
 D^{d-1}_{\varepsilon_1\cdots \varepsilon_{d-1}}&=&\{(y_1,\ldots,y_{d-1})\in D^{d-1}\,|\,y_i\sim\varepsilon_i\},
\end{eqnarray*}
where $y_i\sim 0$ means $y_i=0$, $y_i\sim +$ means $y_i\geq 0$ and $y_i\sim -$ means $y_i\leq 0$. If $2\leq c\leq d-2$, we can also write $D^{c-1}_{\varepsilon_1\cdots \varepsilon_{c-1}}=D^{d-1}_{\varepsilon_1\cdots \varepsilon_{c-1}0\cdots 0}$.

We then get a subdivision of the previous cell structure using cells of the form
\[
 D^{c_1-1}_{\varepsilon_{1\,1}\cdots \varepsilon_{1\,c_1-1}} \times \cdots \times D^{c_k-1}_{\varepsilon_{k\,1}\cdots \varepsilon_{k\,c_k-1}}
\]
subject to the condition that $c_1\geq 2$, with $\varepsilon_{1\,c_1-1}=+$ if $c_1>2$, $c_i-c_{i-1}\in \{0,1\}$ and if $c_i>c_{i-1}$, then $\varepsilon_{i\,c_i-1}=+$.

Let us denote by $C_\ast'(\mathcal{N}^k,\partial\mathcal{N}^k)$ the cellular chain complex of this subdivision. We can form the subcomplexes $I^\mathbf{p}C_\ast(\mathcal{N}^k,\partial\mathcal{N}^k)$ and $I^\mathbf{p}C_\ast'(\mathcal{N}^k,\partial\mathcal{N}^k)$ using the standard definition of $\mathbf{p}$-allowable chains. Then subdivision induces a chain map
\[
 i\colon I^\mathbf{p}C_\ast(\mathcal{N}^k,\partial\mathcal{N}^k) \to I^\mathbf{p}C_\ast'(\mathcal{N}^k,\partial\mathcal{N}^k).
\]
To obtain a chain map $p\colon I^\mathbf{p}C_\ast'(\mathcal{N}^k,\partial\mathcal{N}^k) \to I^\mathbf{p}C_\ast(\mathcal{N}^k,\partial\mathcal{N}^k)$ note that each cell in the subdivision complex satisfies
$D^{c-1}_{\varepsilon_1\cdots \varepsilon_{c-1}} \subset D^{c-1}$, and if $\varepsilon_{c-1}=+$, $D^{c-1}_{\varepsilon_1\cdots \varepsilon_{c-1}} \subset D^{c-1}_+$.

We can find a cellular homotopy $H\colon D^{c-1}\times I \to D^{c-1}$ between the identity and a map $H_1$ which is cellular when viewed as a map from the subdivision of $D^{c-1}$ to the original cell. Furthermore, we obtain another homotopy $H_+\colon D^{c-1}_+\times I\to D^{c-1}_+$ which does the same for $D^{c-1}_+$.

This can be done using induction on $c$, basically by sliding the cell $D^{c-1}_{+\cdots +}$ over $D^{c-1}$ (or $D^{c-1}_+$) while all other $c-1$-dimensional cells map into lower skeleta. These homotopies are stratum preserving, so can be used to define a chain map $p\colon I^\mathbf{p}C_\ast'(\mathcal{N}^k,\partial\mathcal{N}^k) \to I^\mathbf{p}C_\ast(\mathcal{N}^k,\partial\mathcal{N}^k)$ with $p\circ i={\rm id}_{I^\mathbf{p}C_\ast}$ and $i\circ p$ chain homotopic to ${\rm id}_{I^\mathbf{p}C_\ast'}$.

Note that the construction of the homotopies to give the chain homotopy equivalences is similar to the proof in \cite[Appendix]{macvil}.

The subdivision cell complex is regular in the sense that the attaching maps are homeomorphisms onto their image, and it has the same flag-like property as the original cell complex. If we subdivide this subdivision further by a flag-like triangulation, we can use the flag-like property to see that $I^\mathbf{p}C_\ast'(\mathcal{N}^k,\partial\mathcal{N}^k)$ has the correct chain homotopy type for intersection homology by a similar argument as above, compare also \cite[Appendix]{macvil}.

It remains to calculate the homology of $I^\mathbf{p}C_\ast(\mathcal{N}^k,\partial\mathcal{N}^k)$.

Each cell $\sigma$ is represented by a symbolic $(d-2)\times k$ matrix, whose non-zero rows (except the last one) are of the form $(0\,\cdots\,0\,+\,\ast\,\cdots \,\ast)$, with the last one of the form $(0\,\cdots\,0\,\ast\,\cdots \,\ast)$. More precisely, if we denote the number of non-zero entries in the $i$-th row, $i=1,\ldots,d-2$, by $k_i$, we have $0\leq k_1\leq k$, $0\leq k_i\leq \max\{0,k_{i-1}-1\}$ for $i=2,\ldots,d-3$, and $k_{d-2}=\max\{0,k_{d-3}-1\}$.

The dimension of the cell $\sigma$ is then given by
\begin{eqnarray*}
 \dim \sigma&=&k+k_1+\cdots +k_{d-2}.
\end{eqnarray*}
Also note that $\sigma$ is a cell in $\mathcal{N}^k_c$ if and only if $k_i=0$ for $i\geq c-1$, for all $c=2,\ldots,d-2$. Furthermore,
\begin{eqnarray*}
 \dim (\sigma\cap \mathcal{N}^k_c)&=& k + k_1 + \cdots k_{c-2}.
\end{eqnarray*}
If $\mathbf{p}$ is a perversity, the condition for the cell $\sigma$ to be $\mathbf{p}$-allowable is then simply given by
\begin{eqnarray*}
 \mathbf{c}_{d,d-c}^{k+3}-\mathbf{p}(d-c) & \leq & k_{c-1}+\cdots + k_{d-2}.
\end{eqnarray*}
Recall that $\mathbf{c}_{d,d-c}^{k+3}k$ is the codimension of $\mathcal{N}^k_c$ and $\mathbf{p}(d-c)$ is the entry corresponding to this stratum.

If we look at the perversity $\mathbf{p}_s$ with $s\in \{0,\ldots,k+2-d\}$, we can get a simpler criterion for allowability.

\begin{lemma}\label{allowable}
 Let $s\in \{0,\ldots,k+2-d\}$. For the cell $\sigma$ to be $\mathbf{p}_s$-allowable, we need
\[
 k-i+1-s \,\,\, \leq \,\,\, k_i \,\,\, \leq \,\,\, k-i+1
\]
for all $i=1,\ldots,d-3$.
\end{lemma}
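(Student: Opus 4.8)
The plan is to translate the $\mathbf{p}$-allowability criterion stated just before the lemma into a condition on the row counts $k_1,\ldots,k_{d-2}$ of $\sigma$ alone, and to observe that for the perversity $\mathbf{p}_s$ all terms not involving $s$ cancel. For $i=1,\ldots,d-2$ introduce the deficiency $a_i=(k-i+1)-k_i$. Note first that the upper bound $k_i\le k-i+1$, i.e.\ $a_i\ge 0$, holds automatically for every cell of the complex: this follows from $k_1\le k$ and $k_i\le\max\{0,k_{i-1}-1\}$ by induction on $i$, using that $k\ge d-2$ (forced by $s\ge 0$). Hence the real content of the lemma is the lower bound $k_i\ge k-i+1-s$, equivalently $a_i\le s$, for $i=1,\ldots,d-3$.

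Next I dispose of the degenerate case where $k_i=0$ for some $i\le d-3$. The row conditions then force $k_{d-3}=k_{d-2}=0$, so the allowability inequality for the stratum $\mathcal{N}^k_{d-2}$ (the case $c=d-2$) reads $\mathbf{c}_{d,2}^{k+3}-2s\le 0$; but $\mathbf{c}_{d,2}^{k+3}=2(k+3-d)+1$ and $2s\le 2(k+2-d)$ give a left side of at least $3$, a contradiction, so $\sigma$ is not $\mathbf{p}_s$-allowable. The claimed lower bound also fails here, since $k-i+1-s\ge k-(d-3)+1-(k+2-d)=2>0$; so both sides of the asserted equivalence are false. From now on we may assume $k_i\ge 1$ for all $i\le d-3$, so that $k_{d-2}=k_{d-3}-1$. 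In terms of the deficiencies this reads $a_{d-2}=a_{d-3}$, while $k_i\le k_{i-1}-1$ reads $a_{i-1}\le a_i$; thus $a_1\le a_2\le\cdots\le a_{d-2}$.

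The key computation is that for each $c\in\{2,\ldots,d-2\}$ the allowability inequality $\mathbf{c}_{d,d-c}^{k+3}-(d-c)s\le k_{c-1}+\cdots+k_{d-2}$ is equivalent to $a_{c-1}+\cdots+a_{d-2}\le(d-c)s$. One substitutes $\mathbf{c}_{d,d-c}^{k+3}=(d-c)(k+3-d)+\frac{(d-c)(d-c-1)}{2}$ from (\ref{codimension_formula}) together with $k_{c-1}+\cdots+k_{d-2}=\sum_{i=c-1}^{d-2}(k-i+1)-\sum_{i=c-1}^{d-2}a_i$, evaluates the arithmetic series $\sum_{i=c-1}^{d-2}(k-i+1)=\frac{(d-c)(2k+5-c-d)}{2}$, and checks that after collecting terms everything but $(d-c)s$ cancels. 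This short computation is the only place that requires any calculation, and is the main thing to get right.

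Granting this reformulation the lemma is immediate. If $a_i\le s$ for all $i=1,\ldots,d-3$, then also $a_{d-2}=a_{d-3}\le s$, so each of the $d-c$ summands of $a_{c-1}+\cdots+a_{d-2}$ is at most $s$ and every allowability inequality holds; hence $\sigma$ is $\mathbf{p}_s$-allowable. Conversely, if $\sigma$ is $\mathbf{p}_s$-allowable, take $c=d-2$: then $a_{d-3}+a_{d-2}\le 2s$, and since $a_{d-2}=a_{d-3}$ this yields $a_{d-3}\le s$; by monotonicity $a_i\le a_{d-3}\le s$ for all $i\le d-3$. Together with the automatic bound $a_i\ge 0$ this is exactly $k-i+1-s\le k_i\le k-i+1$.
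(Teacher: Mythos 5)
Your proof is correct and follows essentially the same route as the paper: both translate the stated allowability criterion for each stratum into the arithmetic identity that the codimension minus the perversity entry equals $\sum_{i=c-1}^{d-2}(k-i+1)-(d-c)s$, and then exploit the monotone structure of the row counts (your deficiencies $a_i$ versus the paper's propagation of a failing lower bound to all larger indices). Your explicit treatment of the case $k_i=0$ and the observation that the single stratum $c=d-2$ suffices for the converse are minor tidy-ups of the same argument.
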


\begin{proof}
 Note that $k_i\leq k-i+1$ is satisfied anyway. So assume that $k-i+1-s > k_i$ for some $i\in \{1,\ldots,d-3\}$. Then
\begin{eqnarray*}
 k-j+1-s & > & k_j
\end{eqnarray*}
for all $j\geq i$. Write $c=d-i+1$. Then
\begin{eqnarray*}
 k_{d-c-1}+k_{d-c}+\cdots +k_{d-2} & < & ck - (d-c-1)-\cdots -(d-2)+c-cs\\
&=& ck-cd+2+\cdots +(c-1)+c+(c+1)+c-cs\\
&=& c(k+3-d)+1+2+\cdots +(c-1) -cs\\
&=& \mathbf{c}_{d,c}^{k+3} -\mathbf{p}_s(c)
\end{eqnarray*}
which would contradict $\mathbf{p}_s$-allowability. The same calculation also shows that the inequality is sufficient for allowability.
\end{proof}

For $s=0$ this means that only the top-dimensional cell is allowable. As its boundary is zero, this confirms our previous calculation of $I^\mathbf{0}H_\ast(\mathcal{N}^k,\partial\mathcal{N}^k)$.

The remaining case of interest for us is when $s=1$. Recall that we assume $k\geq d-2$, so that $\mathcal{N}^k$ is a pseudomanifold with boundary. In order to get that $\mathbf{p}_1$ is a perversity, we actually need $k\geq d-1$. The dimension of $\mathcal{N}^k$ is given by $\mathbf{d}^{k+3}_d$, compare (\ref{dimension_formula}).

\begin{proposition}\label{morse_data}
 Let $k\geq d-1$.

If $d$ is odd, then
\begin{eqnarray*}
 I^{\mathbf{p}_1}H_{\mathbf{d}^{k+3}_d-r}(\mathcal{N}^k,\partial\mathcal{N}^k)&=&\left\{
\begin{array}{cl}
 \Z & r=0 \\
 \Z/2 & r=2l+1,\,l=1,\ldots,(d-3)/2\\
 0 & {\rm otherwise}
\end{array}
\right.
\end{eqnarray*}
If $d$ is even, then
\begin{eqnarray*}
 I^{\mathbf{p}_1}H_{\mathbf{d}^{k+3}_d-r}(\mathcal{N}^k,\partial\mathcal{N}^k)&=&\left\{
\begin{array}{cl}
 \Z & r=0 \\
 \Z/2 & r=2l,\,l=1,\ldots,(d-4)/2\\
 0 & {\rm otherwise}
\end{array}
\right.
\end{eqnarray*}

\end{proposition}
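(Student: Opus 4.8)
The plan is to compute $I^{\mathbf{p}_1}H_\ast(\mathcal{N}^k,\partial\mathcal{N}^k)$ directly from the cellular intersection chain complex $I^{\mathbf{p}_1}C_\ast(\mathcal{N}^k,\partial\mathcal{N}^k)$, using the symbolic $(d-2)\times k$ matrices and the explicit boundary operator of \cite{schuet} recalled in Section~\ref{section_data}. The first step is to list the $\mathbf{p}_1$-allowable cells. By Lemma~\ref{allowable} with $s=1$, a matrix with row lengths $k_1,\dots,k_{d-2}$ is $\mathbf{p}_1$-allowable exactly when $k-i\le k_i\le k-i+1$ for $i=1,\dots,d-3$; since $k\ge d-1$ this forces $k_{d-3}\ge 2$ and hence $k_{d-2}=k_{d-3}-1$, so every such cell lies in the top stratum. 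Writing $\varepsilon_i=(k-i+1)-k_i\in\{0,1\}$, the cell-structure inequalities $k_i\le k_{i-1}-1$ say exactly that $\varepsilon_1\le\cdots\le\varepsilon_{d-3}$, so the allowable cells are parametrised by the non-decreasing $\{0,1\}$-strings of length $d-3$; using $\dim\sigma=k+k_1+\cdots+k_{d-2}$ one checks that the cell with $T:=\sum\varepsilon_i$ ones has dimension $\mathbf{d}_d^{k+3}$ when $T=0$ and $\mathbf{d}_d^{k+3}-T-1$ when $T\ge 1$.

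Next I would feed each of these cells into the boundary formula. All of its coefficients are $0$ or $\pm 2$, produced by the two local rules $\begin{array}{cc}+&\ast\\0&\ast\end{array}\mapsto 0$ and $\begin{array}{cc}+&\ast\\0&0\end{array}\mapsto\pm 2\begin{array}{cc}0&+\\0&0\end{array}$. Running along the staircase of $+$-signs of a given allowable cell---and watching in particular how each $+$ that is pushed to the right interacts with the forced bottom row $k_{d-2}=k_{d-3}-1$---one reads off, first, whether $\partial$ of that cell stays $\mathbf{p}_1$-allowable (if a non-allowable matrix occurs with coefficient $\pm 2$ then, $\Z$ being torsion-free, this cell and all of its multiples are excluded from $I^{\mathbf{p}_1}C_\ast$), and, second, the coefficient ($0$ or $\pm 2$) of the differential between the surviving cells. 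The net effect is that $I^{\mathbf{p}_1}C_\ast(\mathcal{N}^k,\partial\mathcal{N}^k)$ collapses to a short complex of copies of $\Z$ sitting in the degrees found in the first step, with differentials equal to $0$ or $\pm 2$ in an alternating pattern whose phase is controlled by the parity of $d$. Passing to $\ker/\mathrm{im}$ degree by degree then gives a single $\Z$ in degree $\mathbf{d}_d^{k+3}$ together with the copies of $\Z/2$ in exactly the claimed degrees, the two phases of the alternation being precisely the $d$-even and $d$-odd cases.

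The main obstacle is the boundary analysis in the second step: one has to track where the pushed-down $+$-signs land relative to the rigid last row, since this is what separates allowable from non-allowable boundary terms and what pins down each $\pm 2$; a single misplaced $\pm 2$ or spurious vanishing would shift the alternation and corrupt the list of $\Z/2$'s. Once the combinatorics of these pushes is under control---including the two extreme cases $T=0$, where one must check that the boundary of the top cell vanishes so that it represents the fundamental class, and $T=d-3$, the bottom of the range---the parity dichotomy emerges on its own, and what remains is routine bookkeeping with the dimension formula~(\ref{dimension_formula}) and Lemma~\ref{allowable}.
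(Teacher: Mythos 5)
Your proposal follows essentially the same route as the paper: Lemma \ref{allowable} with $s=1$ yields exactly one allowable cell in each of the degrees $\mathbf{d}_d^{k+3}$ and $\mathbf{d}_d^{k+3}-r$, $r=2,\ldots,d-2$ (your non-decreasing $\{0,1\}$-strings are the paper's parameter $i_0$), and the complex reduces to a string of $\Z$'s with differentials alternating between $0$ and $\pm 2$, the phase determined by the parity of $d$. The one step you flag as the main obstacle --- pinning down which differentials are $\pm 2$ and which vanish --- is exactly the point the paper does not re-derive either, instead quoting the coefficient formula $1+(-1)^{l}$ from \cite[\S 8]{schuet}, where $l$ is the difference between the number of non-zero entries in the $i_0$-th and last rows; with that input your plan closes up and agrees with the paper's argument.
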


\begin{proof}
 We claim that $I^{\mathbf{p}_1}C_\ast(\mathcal{N}^k,\partial\mathcal{N}^k)$ is generated by one cell each in dimensions $\mathbf{d}^{k+3}_d-r$ for $r=0,2,3,\ldots,d-2$.

To see this note that for a cell to be $\mathbf{p}_1$-allowable, Lemma \ref{allowable} gives the existence of $i_0\in \{1,\ldots,d-3,d-1\}$ such that
\begin{eqnarray*}
 k_i & = & \left\{
\begin{array}{cl}
 k-i+1 & i<i_0 \\
 k-i & i\geq i_0
\end{array}
\right.
\end{eqnarray*}
The case $i_0=d-2$ is omitted, as $k_{d-2}=k_{d-3}-1$, and $i_0=d-1$ corresponds to the top-dimensional cell. The matrices of the non-top-dimensional, $\mathbf{p}_1$-allowable cells each have one occurance of a submatrix $\begin{pmatrix} + & \ast & \ast \\ 0 & 0 & + \end{pmatrix}$, while all other occurences of $+$ are in a submatrix of the form $\begin{pmatrix} + & \ast \\ 0 & + \end{pmatrix}$. Therefore the boundary of such a cell in $C_\ast(\mathcal{N}^k,\partial\mathcal{N}^k)$ can involve at most one other cell, which is also $\mathbf{p}_1$-allowable. The dimension of such a cell, depending on $i_0$, is easily seen to be $\mathbf{d}^{k+3}_d-(d-1-i_0)$ which proves the claim.

Let us denote such a cell by $\sigma_{i_0}$ with $i_0\in \{1,\ldots,d-3,d-1\}$. Clearly $\partial(\sigma_{d-1})=0$ which shows that the top-dimensional homology is $\Z$. But for $i_0\leq d-3$, we get $\partial(\sigma_{i_0})= \varepsilon_{i_0} \sigma_{i_0-1}$ with $\varepsilon_{i_0}\in \{0,\pm 2\}$. This boundary has been analyzed in detail in \cite[\S 8]{schuet}, and it is shown there that $\varepsilon$ depends on the difference of the number of non-zero entries in the $i_0$-th row with the number of non-zero-entries in the last row. More precisely, if the difference of these numbers, taken from the matrix of $\sigma_{i_0-1}$, is $l$, the coefficient is $1+(-1)^{l}$.

In particular, we have $\partial(\sigma_{d-3})=2\sigma_{d-4}$, interpreting $\sigma_0=0$, in case $d=4$. As the coefficients are alternating between $0$ and $2$ from then on, the chain complex $I^{\mathbf{p}_1}C_\ast(\mathcal{N}^k,\partial\mathcal{N}^k)$ is of the form
\[
 \Z \longrightarrow 0 \longrightarrow \Z \stackrel{\cdot 2}{\longrightarrow} \Z \stackrel{\cdot 0}{\longrightarrow} \Z \longrightarrow \cdots \stackrel{\cdot \varepsilon_1}{\longrightarrow} \Z
\]
As $\varepsilon_1=2$ for odd $d$ and $\varepsilon_1=0$ for even $d$, the result follows.
\end{proof}

\section{Generators for the reduced intersection ring}

The reduced intersection ring $I\!H^{(d-1)\ast}(\mathcal{M}_d(\ell))$ is generated by $I^{\mathbf{p}_1}H_{\mathbf{d}^{n-1}_d}(\mathcal{M}_d(\ell))$, so we begin by calculating this group.

In Section \ref{section_morse} we obtained a $\SO(d-1)$-invariant Morse-Bott function $\bar{F}:\mathcal{C}_d(\ell)\to \R$ with all critical manifolds spheres of dimension $d-2$ and of index $k(d-1)$ for some $k\in \{0,\ldots,n-3\}$. Furthermore, the number of such spheres with index $k(d-1)$ is equal to the $(2k)$-th Betti number of $\mathcal{M}_3(\ell)$. The Betti numbers of $\mathcal{M}_3(\ell)$ are well known, see \cite{hauknu, klyach}. In particular, for $\mathcal{M}_3(\ell)\not=\emptyset$ we have exactly one absolute maximum, and the number of critical manifolds of index $(n-4)(d-1)$ is equal to $1+a_1(\ell)$.

The Morse function $\bar{F}$ induces the filtration
\[
 \emptyset \subset \mathcal{M}^0 \subset \mathcal{M}^1 \subset \cdots \subset \mathcal{M}^m = \mathcal{M}_d(\ell)
\]
as in Section \ref{section_morse}.

Recall that we need $n\geq d+2$ for $\mathbf{p}_1$ to be a perversity. If $n=d+1$, $\mathcal{M}_d(\ell)$ is either empty or a sphere of dimension $\mathbf{d}^{d+1}_d$, so we do not need to consider this case.

\begin{lemma}\label{almost_calc}
 Let $d\geq 4$, $n\geq d+2$ and $\ell\in \R^n$ a generic length vector such that $\mathcal{M}_d(\ell)\not=\emptyset$. Then
\begin{eqnarray*}
 I^{\mathbf{p}_1}H_{\mathbf{d}^{n-1}_d}(\mathcal{M}^{m-1}) & \cong & \Z^{1+a_1(\ell)},\\
 I^{\mathbf{p}_1}H_{\mathbf{d}^{n-1}_d-1}(\mathcal{M}^{m-1}) & \cong & 0.
\end{eqnarray*}

\end{lemma}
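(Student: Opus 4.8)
The plan is to analyze the filtration $\emptyset \subset \mathcal{M}^0 \subset \cdots \subset \mathcal{M}^{m-1}$ inductively, using the long exact sequence of the pair $(\mathcal{M}^i,\mathcal{M}^{i-1})$ in $I^{\mathbf{p}_1}$-intersection homology and the computation of the relative terms from Proposition \ref{morse_data} together with Lemma \ref{switch_perv}. Recall that $\mathcal{M}^i$ is obtained from $\mathcal{M}^{i-1}$ by attaching the Morse data $\mathcal{N}_i$ of a critical $(d-2)$-sphere of index $k_i(d-1)$, so that excision gives $I^{\mathbf{p}_1}H_\ast(\mathcal{M}^i,\mathcal{M}^{i-1}) \cong I^{\mathbf{p}_1}H_\ast(\mathcal{N}_i,\partial_-\mathcal{N}_i) \cong I^{\mathbf{p}_1}H_\ast(\mathcal{N}^{n-3,k_i},\partial_-\mathcal{N}^{n-3,k_i})$, which by Lemma \ref{switch_perv} (shifting $l=n-3-k_i$ extra coordinates, reducing the perversity index from $1$ down, so we need $k_i$ to account for the shift) is isomorphic to $I^{\mathbf{p}_{1-(n-3-k_i)}}H_\ast(\mathcal{N}^{k_i},\partial\mathcal{N}^{k_i})$. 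For this to land on a genuine perversity we need $1-(n-3-k_i)\geq 0$, i.e.\ $k_i \geq n-4$; since the indices satisfy $k_i\leq n-4$ (the Morse function has no critical manifolds of index above $(n-4)(d-1)$, as the top Betti number considerations and the list in Section \ref{section_morse} show), the only contributing critical manifolds are exactly those of index $(n-4)(d-1)$, of which there are $1+a_1(\ell)$.

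First I would establish the vanishing half: if a critical manifold has index $k_i(d-1)$ with $k_i < n-4$, then the relative group $I^{\mathbf{p}_1}H_\ast(\mathcal{M}^i,\mathcal{M}^{i-1})$ corresponds via Lemma \ref{switch_perv} to $I^{\mathbf{0}}$-intersection homology (or a negative index, which forces the relative complex to have only the zero perversity's single top cell surviving), hence is concentrated in the single degree $\mathbf{d}_d^{k_i+3} + (\text{codimension shift}) = $ a degree strictly below $\mathbf{d}_d^{n-1}-1$. A dimension count using \eqref{dimension_formula} shows $\mathbf{d}_d^{k_i+3}$ plus the relevant shift is at most $\mathbf{d}_d^{n-1}-(n-4-k_i)(d-1) \leq \mathbf{d}_d^{n-1}-(d-1) < \mathbf{d}_d^{n-1}-1$ since $d\geq 4$. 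So passing through these stages of the filtration changes neither $I^{\mathbf{p}_1}H_{\mathbf{d}_d^{n-1}}$ nor $I^{\mathbf{p}_1}H_{\mathbf{d}_d^{n-1}-1}$, and by induction both groups are unchanged from $\mathcal{M}^0$ up to the last stage before the index-$(n-4)(d-1)$ critical manifolds appear.

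Next I would handle the stages attaching an index-$(n-4)(d-1)$ critical manifold, i.e.\ $k_i = n-4$, for which $\ell=1-(n-3-k_i)=0$ and $I^{\mathbf{p}_1}H_\ast(\mathcal{M}^i,\mathcal{M}^{i-1}) \cong I^{\mathbf{0}}H_\ast(\mathcal{N}^{n-4},\partial\mathcal{N}^{n-4})$, which by the Lefschetz-duality computation in Section \ref{section_morse} (or the $s=0$ case of Lemma \ref{allowable}) is $\Z$ in degree $\mathbf{d}_d^{n-1}$ and $0$ elsewhere. Each such stage contributes a $\Z$ in degree $\mathbf{d}_d^{n-1}$ in the long exact sequence; since the relative group vanishes in degree $\mathbf{d}_d^{n-1}-1$ and in degree $\mathbf{d}_d^{n-1}+1$, the connecting maps in and out of degree $\mathbf{d}_d^{n-1}$ are forced to be zero, so each stage adds a free summand $\Z$ and leaves the degree $\mathbf{d}_d^{n-1}-1$ group untouched (it remains $0$). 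Collecting all $1+a_1(\ell)$ such stages yields $I^{\mathbf{p}_1}H_{\mathbf{d}_d^{n-1}}(\mathcal{M}^{m-1}) \cong \Z^{1+a_1(\ell)}$ and $I^{\mathbf{p}_1}H_{\mathbf{d}_d^{n-1}-1}(\mathcal{M}^{m-1}) \cong 0$. One must also check the base of the induction, namely that $I^{\mathbf{p}_1}H_{\mathbf{d}_d^{n-1}}$ and $I^{\mathbf{p}_1}H_{\mathbf{d}_d^{n-1}-1}$ both vanish for $\mathcal{M}^0$ (the minimum $\mathcal{C}_d(\ell^-)$ of $\bar F$), which follows because $\dim\mathcal{M}^0 = \mathbf{d}_d^{n-1}$ only if that stratum is top-dimensional, but in fact $\mathcal{M}^0$ has strictly smaller dimension unless it is the whole space; since $n\geq d+2$, an explicit comparison of $\mathbf{d}_d^{n-2}$ (or the relevant dimension of the minimum chamber) with $\mathbf{d}_d^{n-1}$ shows the gap is at least $d-1 > 1$.

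The main obstacle I anticipate is bookkeeping the precise degree shifts when applying Lemma \ref{switch_perv} to identify $I^{\mathbf{p}_1}H_\ast(\mathcal{N}_i,\partial_-\mathcal{N}_i)$ — in particular making sure that the codimension/dimension shift in that lemma is tracked correctly so that the $\Z$ summand from an index-$(n-4)(d-1)$ manifold genuinely lands in degree $\mathbf{d}_d^{n-1}$ and not an adjacent degree, and confirming that lower-index critical manifolds cannot contribute in degrees $\mathbf{d}_d^{n-1}$ or $\mathbf{d}_d^{n-1}-1$. This is essentially a careful arithmetic argument with the formulas \eqref{dimension_formula} and \eqref{codimension_formula}, combined with the structural input that $\bar F$ is perfect with critical manifolds only in indices $k(d-1)$, $k\leq n-4$, and that the count in the top index is $1+a_1(\ell)$.
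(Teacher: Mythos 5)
Your overall architecture is the same as the paper's: run the long exact sequences of the filtration, show that only the $1+a_1(\ell)$ critical manifolds of index $(n-4)(d-1)$ contribute near degree $\mathbf{d}_d^{n-1}$, and observe that each contributes a free $\Z$ summand because the relative term vanishes in the adjacent degrees. The treatment of the index-$(n-4)(d-1)$ stages is correct. But there is a genuine gap in the vanishing argument for the stages with $k_i<n-4$, which is the crux of the lemma.

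The problem is your use of Lemma \ref{switch_perv}. That lemma only strips factors $D^{d-1}$ from the part of $\mathcal{N}^{m,k}$ \emph{not} contained in $\partial_-\mathcal{N}^{m,k}$ (the second index $k$ is the same on both sides), lowering the perversity index by one per stripped factor, and it bottoms out at $\mathbf{p}_0=\mathbf{0}$. Starting from $I^{\mathbf{p}_1}H_\ast(\mathcal{N}^{n-3,k_i},\partial_-\mathcal{N}^{n-3,k_i})$ you can therefore strip exactly one factor and land on $I^{\mathbf{0}}H_\ast(\mathcal{N}^{n-4,k_i},\partial_-\mathcal{N}^{n-4,k_i})$; you cannot continue down to $(\mathcal{N}^{k_i},\partial\mathcal{N}^{k_i})$ with a ``negative perversity index,'' and the non-applicability of the lemma does not make the group vanish. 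Moreover, your assertion that this relative term is concentrated in the single degree $\mathbf{d}_d^{k_i+3}$ is false: only for the fully relative pair $(\mathcal{N}^{k},\partial\mathcal{N}^{k})=(\mathcal{N}^{k,k},\partial_-\mathcal{N}^{k,k})$ is $I^{\mathbf{0}}H_\ast$ concentrated in the top degree (via Lefschetz duality to $I^{\mathbf{t}}H_\ast$ of a contractible space). For $k_i<n-4$ the pair $(\mathcal{N}^{n-4,k_i},\partial_-\mathcal{N}^{n-4,k_i})$ is genuinely different, and its $I^{\mathbf{0}}$-homology is Lefschetz-dual to the \emph{ordinary} homology of the complementary Morse data $(\mathcal{N}^{n-4,n-4-k_i},\partial_-\mathcal{N}^{n-4,n-4-k_i})$, which is spread over many degrees (this is exactly what produces the complicated Betti numbers of $\mathcal{M}_d(\ell)$ in \cite{schuet}). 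What does hold, and what your argument is missing, is that this ordinary relative homology vanishes in degrees $0$ and $\pm 1$ because $\partial_-\mathcal{N}^{n-4,n-4-k_i}\neq\emptyset$ (it is the Morse data of a critical manifold of positive index $(n-4-k_i)(d-1)$), so the dual group $I^{\mathbf{0}}H_r$ vanishes for $r=\mathbf{d}_d^{n-1},\,\mathbf{d}_d^{n-1}\pm 1$. This Lefschetz-duality step, together with the torsion-freeness check via \cite[Cor.4.4.3]{fried2}, is the essential ingredient you need to supply; your dimension count gives the right numerical bound only because you have implicitly assumed the (false) single-degree concentration. As a minor further point, the minimum of $\bar F$ is a sphere $S^{d-2}$, not $\mathcal{C}_d(\ell^-)$, so the base case $\mathcal{M}^0$ is best handled as the relative term $(\mathcal{M}^0,\emptyset)=(\mathcal{N}^{n-3,0},\emptyset)$ by the same duality argument rather than by the dimension comparison you sketch.
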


\begin{proof}
 For $l\leq m-1$ we have the long exact sequence
\begin{multline*}
 \cdots \longrightarrow I^{\mathbf{p}_1}H_{r+1}(\mathcal{M}^l,\mathcal{M}^{l-1}) \longrightarrow I^{\mathbf{p}_1}H_r(\mathcal{M}^{l-1})\longrightarrow I^{\mathbf{p}_1}H_r(\mathcal{M}^l)\longrightarrow \\ I^{\mathbf{p}_1}H_r(\mathcal{M}^l,\mathcal{M}^{l-1}) \longrightarrow \cdots 
\end{multline*}
and $I^{\mathbf{p}_1}H_{r+1}(\mathcal{M}^l,\mathcal{M}^{l-1})\cong I^{\mathbf{p}_1}H_r(\mathcal{N}^{n-3,k_l},\partial_-\mathcal{N}^{n-3,k_l})$ using the notation of Section \ref{section_data}. Here $k_l$ refers to the index of the critical point contained in $\mathcal{M}^l-\mathcal{M}^{l-1}$. Then $k_l\leq n-4$ as there is only one critical point of index $(n-3)(d-1)$, and it is contained in $\mathcal{M}^m-\mathcal{M}^{m-1}$.

We need to show that
\begin{eqnarray*}
 I^{\mathbf{p}_1}H_r(\mathcal{N}^{n-3,k_l},\partial_-\mathcal{N}^{n-3,k_l}) & = & 0
\end{eqnarray*}
for $r=\mathbf{d}_d^{n-1}-1,\mathbf{d}_d^{n-1}, \mathbf{d}_d^{n-1}+1$ if $k_l<n-4$, and
\begin{eqnarray*}
 I^{\mathbf{p}_1}H_r(\mathcal{N}^{n-3,n-4},\partial_-\mathcal{N}^{n-3,n-4}) & = & \left\{
\begin{array}{cl}
 \Z & r=\mathbf{d}^{n-1} \\
 0 & r=\mathbf{d}^{n-1}\pm 1
\end{array}
\right.
\end{eqnarray*}
The latter follows directly from Lemma \ref{switch_perv}. For the former, we use Lemma \ref{switch_perv} and Lefschetz duality for pseudomanifolds with boundary to get
\begin{eqnarray*}
 I^{\mathbf{p}_1}H_r(\mathcal{N}^{n-3,k_l},\partial_-\mathcal{N}^{n-3,k_l}) & \cong & I^\mathbf{0}H_r(\mathcal{N}^{n-4,k_l},\partial_-\mathcal{N}^{n-4,k_l})\\
&\cong & I^\mathbf{t}H_{\mathbf{d}^{n-1}_d-r}(\mathcal{N}^{n-4,k_l},\partial_+\mathcal{N}^{n-4,k_l})
\end{eqnarray*}
This is the ordinary homology group $H_{\mathbf{d}^{n-1}_d-r}(\mathcal{N}^{n-4,n-4-k_l},\partial_-\mathcal{N}^{n-4,n-4-k_l})$. The homology of this pair has been calculated in \cite{schuet}. In particular, as $k_l<n-4$, $\partial_-\mathcal{N}^{n-4,n-4-k_l}\not=\emptyset$, so the homology vanishes in degrees $0,\pm 1$. Again we do not get any torsion from Lefschetz duality, as \cite[Cor.4.4.3]{fried2} applies.

Therefore no homology occurs in degree $\mathbf{d}^{n-1}_d-1$, and the rank of $I^{\mathbf{p}_1}H_{\mathbf{d}^{n-1}_d}(\mathcal{M}^l)$ increases by one exactly when there is a critical point of index $(n-4)(d-1)$ in $\mathcal{M}^l-\mathcal{M}^{l-1}$. As there are exactly $1+a_1(\ell)$ such critical points, the result follows.
\end{proof}

\begin{corollary}\label{odd_generators}
 Let $d\geq 5$ be odd, $n\geq d+2$ and $\ell\in \R^n$ a generic length vector with $\mathcal{M}_d(\ell)\not=\emptyset$. Then
\begin{eqnarray*}
 I^{\mathbf{p}_1}H_{\mathbf{d}^{n-1}_d}(\mathcal{M}_d(\ell))&\cong & \Z^{1+a_1(\ell)}.
\end{eqnarray*}

\end{corollary}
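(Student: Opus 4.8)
The plan is to push the Morse-theoretic computation of Lemma~\ref{almost_calc} through one more filtration step, from $\mathcal{M}^{m-1}$ to $\mathcal{M}^m = \mathcal{M}_d(\ell)$, and to check that the Morse datum attached at the top critical value does not disturb the group already found.

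First I would identify the critical manifold attached in this last step: it is the unique one of index $(n-3)(d-1)$, so in the notation of Sections~\ref{section_morse} and~\ref{section_data} we have $k_m=n-3$ and the Morse datum is the pair $(\mathcal{N}^{n-3},\partial\mathcal{N}^{n-3})$ with $\mathcal{N}^{n-3}=\mathcal{N}^{n-3,n-3}$. Since $n\geq d+2$ we have $n-3\geq d-1$, so Proposition~\ref{morse_data} applies, and for $d$ odd it says that $I^{\mathbf{p}_1}H_{\mathbf{d}^n_d-r}(\mathcal{N}^{n-3},\partial\mathcal{N}^{n-3})$ is $\Z$ for $r=0$, is $\Z/2$ for odd $r$ with $3\leq r\leq d-2$, and is $0$ otherwise. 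I would then combine this with the long exact sequence of the pair $(\mathcal{M}^m,\mathcal{M}^{m-1})$ in $I^{\mathbf{p}_1}$-homology and the excision isomorphism $I^{\mathbf{p}_1}H_\ast(\mathcal{M}^m,\mathcal{M}^{m-1})\cong I^{\mathbf{p}_1}H_\ast(\mathcal{N}^{n-3},\partial\mathcal{N}^{n-3})$ of Section~\ref{section_morse}.

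The heart of the matter is the degree bookkeeping, for which I would use $\mathbf{d}^n_d-\mathbf{d}^{n-1}_d=d-1$ from the dimension formula~(\ref{dimension_formula}). On the one hand,
\[
 I^{\mathbf{p}_1}H_{\mathbf{d}^{n-1}_d}(\mathcal{M}^m,\mathcal{M}^{m-1})\cong I^{\mathbf{p}_1}H_{\mathbf{d}^n_d-(d-1)}(\mathcal{N}^{n-3},\partial\mathcal{N}^{n-3}),
\]
the case $r=d-1$ of Proposition~\ref{morse_data}, which vanishes because $d-1$ is even. On the other hand,
\[
 I^{\mathbf{p}_1}H_{\mathbf{d}^{n-1}_d+1}(\mathcal{M}^m,\mathcal{M}^{m-1})\cong I^{\mathbf{p}_1}H_{\mathbf{d}^n_d-(d-2)}(\mathcal{N}^{n-3},\partial\mathcal{N}^{n-3}),
\]
the case $r=d-2$; since $d-2=2\cdot\frac{d-3}{2}+1$ with $\frac{d-3}{2}\geq 1$ (as $d\geq 5$), this group is $\Z/2$. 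The long exact sequence therefore contains
\[
 \Z/2\longrightarrow I^{\mathbf{p}_1}H_{\mathbf{d}^{n-1}_d}(\mathcal{M}^{m-1})\longrightarrow I^{\mathbf{p}_1}H_{\mathbf{d}^{n-1}_d}(\mathcal{M}_d(\ell))\longrightarrow 0 .
\]
By Lemma~\ref{almost_calc} the middle group is $\Z^{1+a_1(\ell)}$, in particular torsion-free, so the map out of $\Z/2$ is zero; hence the remaining map is an isomorphism and $I^{\mathbf{p}_1}H_{\mathbf{d}^{n-1}_d}(\mathcal{M}_d(\ell))\cong\Z^{1+a_1(\ell)}$.

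The genuinely delicate point — and the reason the corollary is not already contained in Lemma~\ref{almost_calc} — is that the top Morse datum does contribute a $\Z/2$ in the neighbouring degree $\mathbf{d}^{n-1}_d+1$; the argument survives only because Lemma~\ref{almost_calc} is stated with integer coefficients and gives a torsion-free answer for $\mathcal{M}^{m-1}$, which forces the connecting homomorphism to kill that $\Z/2$. Beyond carefully checking the parities coming out of Proposition~\ref{morse_data}, I do not anticipate any further obstacle.
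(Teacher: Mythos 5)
Your proposal is correct and is essentially the paper's own argument: the paper likewise runs the long exact sequence of $(\mathcal{M}_d(\ell),\mathcal{M}^{m-1})$, identifies the relative groups via Proposition \ref{morse_data} as $\Z/2$ in degree $\mathbf{d}^{n-1}_d+1$ and $0$ in degree $\mathbf{d}^{n-1}_d$, and concludes from the torsion-freeness of $I^{\mathbf{p}_1}H_{\mathbf{d}^{n-1}_d}(\mathcal{M}^{m-1})\cong\Z^{1+a_1(\ell)}$ that the $\Z/2$ maps to zero. Your degree bookkeeping ($r=d-1$ even, $r=d-2$ odd with $l=(d-3)/2$) is exactly the check the paper leaves implicit.
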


\begin{proof}
 We have the long exact sequence
\begin{multline*}
 \cdots \longrightarrow I^{\mathbf{p}_1}H_{\mathbf{d}^{n-1}_d+1}(\mathcal{M}_d(\ell),\mathcal{M}^{m-1}) \longrightarrow I^{\mathbf{p}_1}H_{\mathbf{d}^{n-1}_d}(\mathcal{M}^{m-1})\longrightarrow \\ I^{\mathbf{p}_1}H_{\mathbf{d}^{n-1}_d}(\mathcal{M}_d(\ell))\longrightarrow I^{\mathbf{p}_1}H_{\mathbf{d}^{n-1}_d}(\mathcal{M}_d(\ell),\mathcal{M}^{m-1}) \longrightarrow \cdots 
\end{multline*}
Combining Proposition \ref{morse_data} with Lemma \ref{almost_calc}, this sequence reduces to
\[
 \Z/2 \longrightarrow \Z^{1+a_1(\ell)} \longrightarrow I^{\mathbf{p}_1}H_{\mathbf{d}^{n-1}_d}(\mathcal{M}_d(\ell)) \longrightarrow 0
\]
As $\Z/2$ necessarily has to map to $0$, the result follows.
\end{proof}
It can easily be shown that $I^{\mathbf{p}_1}H_{\mathbf{d}^{n-1}_d+1}(\mathcal{M}^{m-1})=0$, so we also get
\begin{eqnarray*}
 I^{\mathbf{p}_1}H_{\mathbf{d}^{n-1}_d+1}(\mathcal{M}_d(\ell))&\cong & \Z/2
\end{eqnarray*}
for odd $d$. However, we have no real use for this result.

For even $d$ the long exact sequence turns into
\[
 \Z \longrightarrow \Z^{1+a_1(\ell)} \longrightarrow I^{\mathbf{p}_1}H_{\mathbf{d}^{n-1}_d}(\mathcal{M}_d(\ell)) \longrightarrow 0
\]
and we will see that the first map is non-trivial. This will actually simplify our calculation of the reduced intersection ring, at least after using rational coefficients. For $d$ odd we can only handle certain special cases. A particular case is given in the following example.

\begin{example}\label{shape_space}
 Let $\ell^n=(1,\ldots,1,n-2)\in \R^n$. This is the unique generic and ordered length vector with $a_1(\ell)=0$. The space $\mathcal{M}_d(\ell^n)$ is also known as the shape space $\Sigma^{n-1}_{d-1}$ of \cite{kebacl}, see \cite[Prop.A.1]{schuet}.

Let $d$ be odd and $n=d+k$ with $k\geq 1$. For $s\leq k-1$ we claim that
\begin{eqnarray*}
 I^{\mathbf{p}_s}H_{\mathbf{d}^{n-s}_d}(\mathcal{M}_d(\ell^n))&\cong & \Z.
\end{eqnarray*}
Furthermore, the reduced intersection ring agrees with the unreduced intersection ring, and we have
\begin{eqnarray*}
 I\!H^{(d-1)\ast}(\mathcal{M}_d(\ell^{d+k})) & \cong & \Z[X]/X^k
\end{eqnarray*}
with the degree of $X$ equal to $d-1$.

The proof is by induction on $k$. We will also show that $I^{\mathbf{p}_s}H_{\mathbf{d}^{n-s}_d}(\mathcal{M}_d(\ell^n))$ is generated by the fundamental class 
\[
[\mathcal{M}_d(\ell^{n-s})]\,\,\,\in\,\,\, I^{\mathbf{p}_s}H_{\mathbf{d}^{n-s}_d}(\mathcal{M}_d(\ell^n))
\]
and that
\begin{eqnarray*}
 I^{\mathbf{p}_s}H_{\mathbf{d}^{n-s}_d-1}(\mathcal{M}_d(\ell^n))&=&0.
\end{eqnarray*}
For $s=0$ and all $k$ the fundamental class part is a standard result for pseudomanifolds, and the latter part follows from Section \ref{section_data}.

For $k\geq 2$, note that $(\ell^{d+k})^-=\ell^{d+k-1}$ and $(\ell^{d+k})^+$ satisfies $\mathcal{M}_d((\ell^{d+k})^+)=\emptyset$. The $\SO(d-1)$-equivariant Morse function $F:\mathcal{C}_d(\ell^{d+k})\to \R$ from Section \ref{section_morse} has two critical manifolds, the absolute minimum $\mathcal{C}_d(\ell^{d+k-1})$ and one absolute maximum $S^{d-2}$. We thus get a filtration of $\mathcal{M}_d(\ell^{d+k})$ of the form
\[
 \emptyset \subset \mathcal{M}^{d+k} \subset \mathcal{M}_d(\ell^{d+k})
\]
where $\mathcal{M}^{d+k}$ has the homotopy type of $\mathcal{M}_d(\ell^{d+k-1})$. In fact, using the same technique as in the proof of Lemma \ref{switch_perv}, we see that
\begin{eqnarray*}
 I^{\mathbf{p}_s}H_\ast(\mathcal{M}_d(\ell^{d+k-1})) & \cong & I^{\mathbf{p}_{s+1}}H_\ast(\mathcal{M}^{d+k}).
\end{eqnarray*}
In particular,
\[
 [\mathcal{M}_d(\ell^{d+k-s-1})]\,\,\,\in\,\,\, I^{\mathbf{p}_{s+1}}H_{\mathbf{d}^{d+k-s-1}_d}(\mathcal{M}^{d+k})
\]
is a generator and
\begin{eqnarray*}
 I^{\mathbf{p}_{s+1}}H_{\mathbf{d}^{d+k-s-1}_d-1}(\mathcal{M}^{d+k})&=&0.
\end{eqnarray*}
We have the long exact sequence
\begin{multline*}
 \cdots \longrightarrow I^{\mathbf{p}_{s+1}}H_{r+1}(\mathcal{M}_d(\ell^{d+k}),\mathcal{M}^{d+k}) \longrightarrow I^{\mathbf{p}_{s+1}}H_r(\mathcal{M}^{d+k}) \longrightarrow\\ I^{\mathbf{p}_{s+1}}H_r(\mathcal{M}_d(\ell^{d+k})) \longrightarrow I^{\mathbf{p}_{s+1}}H_r(\mathcal{M}_d(\ell^{d+k}),\mathcal{M}^{d+k})
\end{multline*}
Note that
\begin{eqnarray*}
 I^{\mathbf{p}_{s+1}}H_r(\mathcal{M}_d(\ell^{d+k}),\mathcal{M}^{d+k}) &\cong & I^{\mathbf{p}_{s+1}}H_r(\mathcal{N}^{d+k-3},\partial\mathcal{N}^{d+k-3})
\end{eqnarray*}
in the notation of Section \ref{section_data}. By Lemma \ref{allowable}, the minimal dimensional cell which is $\mathbf{p}_{s+1}$-allowable is obtained by choosing each $k_i=d+k-3-i-s$ for $i=1,\ldots,d-2$. The dimension of this cell is

\[
\begin{split}
  d+k-3+(d+k-3-1-s)+\cdots +(d+k-3-(d-2)-s) & = \\
  (d+k-3-s-1)(d-1)+(s+1)+1+\cdots +(d-3) & = \\
 \mathbf{d}^{d+k-s-1}_d+(s+1)
\end{split}
\]
In particular, for $s>0$ and $r\leq \mathbf{d}^{d+k-s-1}_d$ inclusion induces an isomorphism
\begin{eqnarray*}
 I^{\mathbf{p}_{s+1}}H_r(\mathcal{M}^{d+k})&\cong& I^{\mathbf{p}_{s+1}}H_r(\mathcal{M}_d(\ell^{d+k}))
\end{eqnarray*}
and this even holds for $s=0$ as in Corollary \ref{odd_generators}. This finishes our induction step. It remains to calculate the intersection ring. Of course, $X$ corresponds to the fundamental class
\[
 [\mathcal{M}_d(\ell^{d+k-1})]\,\,\,\in\,\,\, I^{\mathbf{p}_1}H_{\mathbf{d}^{d+k-1}_d}(\mathcal{M}_d(\ell^{d+k})).
\]
Note that we think of $\mathcal{M}_d(\ell^{d+k-1})\subset \mathcal{M}_d(\ell^{d+k})$ as those points $[x_1,\ldots,x_n]$ with $x_{n-1}=-x_n$. But we could also fix a different coordinate to point in the opposite direction of the last entry: let
\begin{eqnarray*}
 \mathcal{M}_d(\ell^{d+k-1})'&=&\{ [x_1,\ldots,x_n]\in \mathcal{M}_d(\ell^{d+k})\,|\,x_{n-2}=-x_n\}.
\end{eqnarray*}
Clearly this is homeomorphic to $\mathcal{M}_d(\ell^{d+k-1})$ by permuting coordinates. From Lemma \ref{fixwhatever} below it follows that
\begin{eqnarray*}
 [\mathcal{M}_d(\ell^{d+k-1})]&=&[\mathcal{M}_d(\ell^{d+k-1})'].
\end{eqnarray*}
Therefore, as $\mathcal{M}_d(\ell^{d+k-1})$ and $\mathcal{M}_d(\ell^{d+k-1})'$ are transverse in the sense of \cite{gormac},
\begin{eqnarray*}
 X^2&=& [\mathcal{M}_d(\ell^{d+k-1})] \cdot [\mathcal{M}_d(\ell^{d+k-1})]\\
&=& [\mathcal{M}_d(\ell^{d+k-1})] \cdot [\mathcal{M}_d(\ell^{d+k-1})']\\
&=& [\mathcal{M}_d(\ell^{d+k-1})\cap \mathcal{M}_d(\ell^{d+k-1})']\\
&=& [\mathcal{M}_d(\ell^{d+k-2})]
\end{eqnarray*}
This means that $X^2$ is a generator of $I^{\mathbf{p}_2}H_{\mathbf{d}^{d+k-2}_d}(\mathcal{M}_d(\ell^{d+k}))$, and we can iterate this argument until $X^k=0$.
\end{example}

\begin{lemma}
 \label{fixwhatever}
Let $d\geq 3$, $n\geq d+2$ and $\ell^n=(1,\ldots,1,n-2)\in \R^n$. Denote
\begin{eqnarray*}
 \mathcal{M}&=&\{ [x_1,\ldots,x_n] \in \mathcal{M}_d(\ell^n)\,|\, x_{n-1}=-x_n\}\\
 \mathcal{M}'&=&\{[x_1,\ldots,x_n] \in \mathcal{M}_d(\ell^n)\,|\, x_{n-2}=-x_n\}.
\end{eqnarray*}
Then $[\mathcal{M}]=[\mathcal{M'}]\in I^{\mathbf{p}_1}H_{\mathbf{d}^{n-1}_d}(\mathcal{M}_d(\ell^n))$.
\end{lemma}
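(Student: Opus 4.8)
The plan is to realise $\mathcal M$ and $\mathcal M'$ as the two ends of an explicit $\mathbf{p}_1$‑allowable bordism inside $\mathcal{M}_d(\ell^n)$. First note that the transposition $\tau$ of the coordinates $x_{n-2}$ and $x_{n-1}$ leaves $\ell^n$ fixed, so it induces a homeomorphism of $\mathcal{M}_d(\ell^n)$; it preserves the rank of a configuration, hence is stratum preserving, and it satisfies $\tau(\mathcal M)=\mathcal M'$. Thus it is equivalent to show $\tau_\ast[\mathcal M]=[\mathcal M]$. One should be aware that $\tau$ is in general not isotopic to the identity (for $d$ even it need not even preserve the orientation of $\mathcal{M}_d(\ell^n)$), so this is a statement about one particular homology class rather than about $\tau$ itself.

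To build the bordism I would consider the closed, $\tau$‑invariant subset $W\subset\mathcal{M}_d(\ell^n)$ consisting of the classes for which $x_{n-2},x_{n-1},x_n$ span a subspace of $\R^d$ of dimension at most $2$. It contains both $\mathcal M$ and $\mathcal M'$, and counting parameters shows that off the deeper strata it is a submanifold of codimension $d-2$, i.e.\ of dimension $\mathbf{d}_d^{n-1}+1$. Over a point of $W$ the bars $x_{n-2}$, $x_{n-1}$, $(n-2)x_n$ together with the chord $c=-\sum_{i=1}^{n-3}x_i$ form a closed planar quadrilateral with side lengths $(1,1,n-2,|c|)$, and running through the resulting $1$‑parameter family of quadrilaterals one meets both the configuration $x_{n-1}=-x_n$ (contributing $\mathcal M$) and the configuration $x_{n-2}=-x_n$ (contributing $\mathcal M'$). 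Orienting this family coherently, the part $W_0$ of $W$ lying between the two configurations is a compact pseudomanifold with boundary, with $\partial W_0=\mathcal M-\mathcal M'$ as oriented cycles; combined with the inclusion‑induced maps on intersection homology this gives $[\mathcal M]=[\mathcal M']$.

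The work lies in the allowability bookkeeping and in the signs, and this is where I expect the real difficulty. Since the strata of $W_0$ sit in $W_0$ with smaller codimension than the corresponding strata $\mathcal N_{d-k}(\ell^n)$ sit in $\mathcal{M}_d(\ell^n)$, the chain carried by $W_0$ is not $\mathbf{p}_1$‑allowable on the nose; one must refine the stratification of $W_0$ and pass through an auxiliary perversity, exactly as in Lemmas \ref{pseudofunct} and \ref{switch_perv}, before including into $\mathcal{M}_d(\ell^n)$, and one must check that the intersections $W_0\cap\mathcal N_{d-k}(\ell^n)$ have the correct dimensions. The subtler point is to verify that the planar quadrilateral family genuinely \emph{connects} the $\mathcal M$‑configurations to the $\mathcal M'$‑configurations inside $W$, rather than separating them onto different components, and that the coherent orientation of the family yields $\partial W_0=\mathcal M-\mathcal M'$ with a plus sign rather than $\mathcal M+\mathcal M'$; reconciling this with the orientation behaviour of $\tau$ is the heart of the matter. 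As an alternative one can try to pair $[\mathcal M]$ and $[\mathcal M']=\tau_\ast[\mathcal M]$ against a $\tau$‑invariant generator of the Poincar\'e--Lefschetz dual group $I^{\mathbf{t}_n-\mathbf{p}_1}H_{d-1}(\mathcal{M}_d(\ell^n))$, but exhibiting such a generator requires essentially the same geometric input.
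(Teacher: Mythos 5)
Your geometric mechanism is exactly the one the paper uses: the one‑parameter family of planar quadrilaterals with side lengths $(\|x_1+\cdots+x_{n-3}\|,1,1,n-2)$ that carries the configuration $x_{n-1}=-x_n$ to the configuration $x_{n-2}=-x_n$. The difference is the packaging. You take the track of this family as a subspace $W_0$ and want to exhibit it as a $\mathbf{p}_1$-allowable bordism, and you correctly flag as unresolved the allowability bookkeeping, the question of whether the quadrilateral family actually connects the two configurations, and the sign of $\partial W_0$. The paper instead parametrizes the family as a stratum-preserving homotopy $f\colon\mathcal{M}\times[0,1]\to\mathcal{M}_d(\ell^n)$, $f(x,t)=[i_x(h_x(t))]$, where $i_x$ embeds the chain space $\mathcal{C}_2(l_x,1,1,n-2)$ into $\mathcal{C}_d(\ell^n)$ by rigidly rotating the first $n-3$ links and $h_x$ traverses an arc of that circle from the $\mathcal{M}$-position to the $\mathcal{M}'$-position. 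This formulation discharges precisely your three open points: allowability of the track is replaced by functoriality of intersection homology under stratum-preserving homotopies (so $f_{0\ast}=f_{1\ast}\colon I^{\mathbf{0}}H_\ast(\mathcal{M})\to I^{\mathbf{p}_1}H_\ast(\mathcal{M}_d(\ell^n))$, in the spirit of Lemma \ref{pseudofunct}); connectivity holds because $\mathcal{C}_2(l_x,1,1,n-2)$ is a single circle containing both special configurations, degenerating to a point exactly on $\mathcal{M}\cap\mathcal{M}'$ where the homotopy is constant; and the sign is automatic because $f_1$ is the coordinate flip carrying $\mathcal{M}$ to $\mathcal{M}'$, which is the very identification used to orient $\mathcal{M}'$, so $f_{1\ast}[\mathcal{M}]=[\mathcal{M}']$ on the nose. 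Your bordism version could be completed, but it would require the perversity-shifting arguments of Lemmas \ref{pseudofunct} and \ref{switch_perv} applied to $W_0$ and a separate orientation check; the homotopy formulation is the shorter route to the same conclusion.
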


\begin{proof}
 The idea is the following: if $[x_1,\ldots,x_n]\in \mathcal{M}$, then $x_{n-2}$ is linearly independent of $x_{n-1}$, unless $[x_1,\ldots,x_n]\in \mathcal{M}\cap \mathcal{M}'$. Note that each $x_i$ for $i<n$ has to be close to $-x_n$ by the particular form of the length vector. We can therefore flip the position of $x_{n-2}$ and $x_{n-1}$ through a 1-dimensional parameter. This will define a homotopy $f:\mathcal{M}\times [0,1]\to \mathcal{M}_d(\ell^n)$ relative to $\mathcal{M}\cap\mathcal{M}'$ between the inclusion of $\mathcal{M}$ and the inclusion of $\mathcal{M}'$.

More precisely, let $x=[x_1,\ldots,x_n]\in \mathcal{M}$. Assume that $x_n=e_1$, $x_{n-1}=-e_1$ and $x_{n-2}\in S^1_-=\{(y_1,y_2,0,\ldots,0)\in S^{d-1}\,|\,y_1,y_2\leq 0\}$. Let $l_x=\|x_1+\cdots+x_{n-3}\|>0$ and $\ell_x=(l_x,1,1,n-2)$. It is easy to see that $\mathcal{C}_2(\ell_x)$ is a point for $x\in \mathcal{M}\cap \mathcal{M}'$ and a circle otherwise.

We think of $\mathcal{C}_2(\ell_x)$ as a subspace of $\mathcal{C}_d(\ell^n)$, where the first link corresponds to a rotation of $x_1+\cdots + x_{n-3}$ in the plane, and the second and third link correspond to $x_{n-2}$ and $x_{n-1}$. Indeed, denote $z_x=(x_1+\cdots+x_{n-3})/l_x\in S^1$. Then define $i_x\colon \mathcal{C}_2(\ell_x)\to \mathcal{C}_d(\ell^n)$ by
\begin{eqnarray*}
 i_x(a,b,c)&=&((az_x^{-1})x_1,\ldots,(az_x^{-1})x_{n-3},b,c)
\end{eqnarray*}
where we think of $S^1$ acting on $S^{d-1}$ by rotation of the first two coordinates, that is, as $\SO(2)$. It is easy to see that this is indeed an inclusion.

There is a unique point $(a,b,c)\in \mathcal{C}_2(\ell_x)$ such that $c=-e_1$, $b\in S^1_-$. Write $b=\exp((\pi+u_x)i)$ with $u_x\geq 0$. As $\mathcal{C}_2(\ell_x)$ is either a circle or a point, there is a unique map $h_x:[0,1]\to \mathcal{C}_2(\ell_x)$ with $p_3(h_x(t))=\exp((\pi+u_xt) i)$, where $p_3:\mathcal{C}_2(\ell_x)\to S^1$ is projection to the third coordinate.

It is now straightforward to check that $f\colon\mathcal{M}\times [0,1]\to \mathcal{M}_d(\ell^n)$ given by
\begin{eqnarray*}
 f(x,t)&=&[i_x(h_x(t))]
\end{eqnarray*}
is a well defined map which satisfies $f(x,0)$ inclusion, and $f(x,1)=q(x)$, where $q:\mathcal{M}\to \mathcal{M}_d(\ell^n)$ is inclusion followed by flipping the $(n-2)$-nd and $(n-1)$-st coordinates. As $f$ is stratum preserving, $f_0$ and $f_1$ induce the same map on intersection homology
\[
 f_{0\ast}=f_{1\ast}\colon I^\mathbf{0}H_\ast(\mathcal{M})\to I^{\mathbf{p}_1}H_\ast(\mathcal{M}_d(\ell^n))
\]
which implies that $[\mathcal{M}]=[\mathcal{M}']$ in the latter group.
\end{proof}

For even $d$ Example \ref{shape_space} is a bit different. This is related to the following lemma.

\begin{lemma}\label{2times}
 Let $d\geq 4$ be even and $\ell\in \R^n$ an ordered, generic length vector with $n\geq d+2$. Then
\begin{eqnarray*}
 2[\mathcal{M}_d(\ell^-)]&=&2[\mathcal{M}_d(\ell^+)] \,\,\,\in\,\,\,I^{\mathbf{p}_1}H_{\mathbf{d}^{n-1}_d}(\mathcal{M}_d(\ell)).
\end{eqnarray*}

\end{lemma}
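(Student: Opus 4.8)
The goal is to show that $2[\mathcal{M}_d(\ell^-)] = 2[\mathcal{M}_d(\ell^+)]$ in $I^{\mathbf{p}_1}H_{\mathbf{d}^{n-1}_d}(\mathcal{M}_d(\ell))$ for $d$ even. Recall that $[\mathcal{M}_d(\ell^\pm)]$ are the fundamental classes of the critical manifolds $\mathcal{C}_d(\ell^\mp)$ of the Morse function $F$ from Section \ref{section_morse}, sitting inside $\mathcal{M}_d(\ell)$ as $\mathcal{M}_d(\ell^-) = \{[x_1,\ldots,x_n]\mid x_{n-1}=-x_n\}$ and $\mathcal{M}_d(\ell^+) = \{[x_1,\ldots,x_n]\mid x_{n-1}=x_n\}$. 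The key geometric point is that the difference $[\mathcal{M}_d(\ell^-)] - [\mathcal{M}_d(\ell^+)]$ is carried by a cobordism-like chain inside $\mathcal{M}_d(\ell)$ obtained from the locus where $x_{n-1}$ and $x_n$ are \emph{collinear}, which is exactly $\mathcal{N}_d(\ell)$'s analogue: the set $W = \{[x_1,\ldots,x_n]\in\mathcal{M}_d(\ell)\mid x_{n-1},x_n \text{ collinear}\}$. On $W$, the min $\mathcal{M}_d(\ell^-)$ and max $\mathcal{M}_d(\ell^+)$ both appear, and generically $W$ fibers over $\mathcal{M}_d(\ell')$ for an $(n-1)$-link linkage with an interval of positions for the pair $(x_{n-1},x_n)$; the two ends of the interval are the two configurations $x_{n-1}=\pm x_n$. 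This gives a chain $\Xi$ with $\partial\Xi = \mathcal{M}_d(\ell^-) - \mathcal{M}_d(\ell^+)$ up to the locus where $x_1,\ldots,x_{n-2}$ is itself collinear with $x_n$.

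The factor of $2$ enters because of the quotient by $\SO(d)$ (equivalently $\SO(d-1)$ on the chain space) together with the parity of $d$: flipping the collinear pair $(x_{n-1},x_n)$ from $+$ to $-$ requires rotating through a half-turn, and for $d$ even this half-turn can be realized inside $\SO(d-1)$ acting on the orthogonal complement, but the induced action on the normal directions of the smaller strata reverses orientation an even number of times only after doubling — more precisely, the chain $\Xi$ we build is not $\mathbf{p}_1$-allowable unless we take it with multiplicity $2$, so that its support can be pushed off the bad stratum. This mirrors the $\varepsilon_1 = 0$ versus $\varepsilon_1 = 2$ dichotomy in Proposition \ref{morse_data}: for even $d$ the boundary coefficient from the lowest cell is $0$, which is precisely what forces the relation to hold only after multiplication by $2$. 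Concretely, I would exhibit $\Xi$ as (twice) a singular chain built from the family $\bigcup_x \mathcal{C}_2(\ell_x)$-type construction of Lemma \ref{fixwhatever}, now letting the third-coordinate angle sweep a full semicircle from $\exp(\pi i)$ (giving $x_{n-1}=-x_n$, i.e. the ``$-$'' normalization — careful, one must recheck which of $\ell^\pm$ corresponds to which endpoint) to $\exp(0) = 1$ (giving $x_{n-1}=x_n$), and check that $f$ is stratum preserving so that $\partial$-allowability holds for the doubled chain.

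The steps, in order: (1) Set up the auxiliary length vector $\ell_x = (\|x_1+\cdots+x_{n-2}\|, \ell_{n-1}, \ell_n)$ and the inclusion $i_x:\mathcal{C}_2(\ell_x)\to\mathcal{C}_d(\ell)$ exactly as in Lemma \ref{fixwhatever}, but now with the pair being $(x_{n-1},x_n)$ rather than $(x_{n-2},x_{n-1})$; here $\mathcal{C}_2(\ell_x)$ is generically a circle, and the two special points are $x_{n-1}=-x_n$ and $x_{n-1}=x_n$. (2) Parametrize a path $h_x:[0,1]\to\mathcal{C}_2(\ell_x)$ running between these two points and define $f:\mathcal{M}_d(\ell^-)\times[0,1]\to\mathcal{M}_d(\ell)$ by $f(x,t)=[i_x(h_x(t))]$; verify it is well-defined and $f(\cdot,0)$, $f(\cdot,1)$ are the inclusions of $\mathcal{M}_d(\ell^-)$ and $\mathcal{M}_d(\ell^+)$. (3) Check that $f$ is stratum preserving — the rank of a configuration can only increase along the homotopy away from the degenerate locus — so that $f$ carries the fundamental cycle of $\mathcal{M}_d(\ell^-)$ to an allowable chain whose boundary is $\mathcal{M}_d(\ell^-)-\mathcal{M}_d(\ell^+)$ plus a term supported on $\{x_{n-1},x_n,\ x_1+\cdots+x_{n-2} \text{ all collinear}\}$. (4) Show this leftover boundary term vanishes in $I^{\mathbf{p}_1}H_\ast$ \emph{after multiplication by $2$}: its support is contained in a stratum of codimension too large to be $\mathbf{p}_1$-allowable as a cycle, but the obstruction to bounding it is $2$-torsion, coming from the same $\mathbb{Z}/2$ that appears in Proposition \ref{morse_data} for even $d$; alternatively, use that $\partial(2\Xi)$ lands in a subcomplex whose relevant homology is killed by $2$. (5) Conclude $2[\mathcal{M}_d(\ell^-)] = 2[\mathcal{M}_d(\ell^+)]$.

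The main obstacle I expect is step (4): pinning down exactly \emph{why} the factor of $2$ is necessary and sufficient, i.e. identifying the $2$-torsion obstruction precisely. One must track orientations carefully through the $\SO(d-1)/\SO(d-2)$-bundle structure of the neighborhood of the collinear locus (as analyzed in Lemma \ref{localstrat} and the Morse-data computations of Section \ref{section_data}), and the cleanest route is probably to reduce to the local model $(\mathcal{N}^{k},\partial\mathcal{N}^{k})$ and invoke the structure of $I^{\mathbf{p}_1}C_\ast$ from Proposition \ref{morse_data}, where the vanishing of the lowest boundary coefficient for even $d$ is exactly the statement that the relevant class is $2$-torsion. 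The geometric homotopy $f$ and its stratum-preserving property (steps 1–3) should be routine given Lemma \ref{fixwhatever}; it is the bookkeeping of the factor $2$ that carries the real content.
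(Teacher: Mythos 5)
Your overall strategy---connect $\mathcal{M}_d(\ell^-)$ to $\mathcal{M}_d(\ell^+)$ through the family of configurations in which the last two links open up from antiparallel to parallel---is pointed in the right direction, but both of the load-bearing steps fail as written. First, step (2): for the pair $(x_{n-1},x_n)$ the auxiliary length vector $\ell_x=(\|x_1+\cdots+x_{n-2}\|,\ell_{n-1},\ell_n)$ is a \emph{triangle}, so $\mathcal{C}_2(\ell_x)$ is generically a two-point set, not a circle; worse, the two configurations you want to join satisfy $\|x_1+\cdots+x_{n-2}\|=\ell_n-\ell_{n-1}$ (antiparallel) and $\ell_n+\ell_{n-1}$ (parallel) respectively, so they do not even lie in a common fiber $\mathcal{C}_2(\ell_x)$. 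To interpolate you must move the first $n-2$ links, and the intermediate slices are then genuinely different spaces (possibly in different chambers), so no product homotopy $f\colon\mathcal{M}_d(\ell^-)\times[0,1]\to\mathcal{M}_d(\ell)$ exists. The mechanism of Lemma \ref{fixwhatever} does not transfer: there the swept circle comes from a four-bar sublinkage and both endpoints sit in the same fiber. The paper avoids this by restricting to $\tilde{\mathcal{C}}_d(\ell)=\{x\in\mathcal{C}_d(\ell)\mid x_{n-2},x_{n-1}\in S^1\}$, projecting to the last (circle-valued) coordinate, and taking the preimage of an arc joining two regular values $\theta_0,\eta_0$ near $\mp1$: after dividing by $\SO(d-2)$ this is an honest cobordism, not a homotopy of $\mathcal{M}_d(\ell^-)$.

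Second, and more seriously, step (4) misidentifies where the factor $2$ comes from. Multiplying a chain by $2$ does not change its support, so it cannot repair a failure of $\mathbf{p}_1$-allowability; and the leftover boundary term you hope is ``$2$-torsion'' is never exhibited or bounded. In the paper the $2$ has a concrete geometric origin: the regular slice $\mathcal{M}_d^{\theta_0}(\ell)$ collapses onto the critical manifold $\mathcal{M}_d(\ell^-)$ by a stratum-preserving homotopy ending in a generically $2\!:\!1$ map, the two preimages differing by the reflection of the circle coordinate. That deck transformation is orientation preserving exactly when $d$ is even, so the collapse has degree $2$ and $[\mathcal{M}_d^{\theta_0}(\ell)]=2[\mathcal{M}_d(\ell^-)]$ (for $d$ odd the degree would be $0$, which is why the lemma is stated only for even $d$). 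The same happens at $\eta_0$ near $+1$, and the cobordism $p^{-1}(J)/\SO(d-2)$ identifies the two doubled classes. So the parity of $d$ enters through the orientation of a reflection acting on the slice, not through any torsion in $I^{\mathbf{p}_1}H_\ast$ of the local Morse data; your appeal to the $\varepsilon_1=0$ dichotomy of Proposition \ref{morse_data} is suggestive but is not a proof. As it stands the proposal has a genuine gap at both the construction of the connecting chain and the justification of the coefficient $2$.
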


\begin{proof}
Let
\begin{eqnarray*}
 \tilde{\mathcal{C}}_d(\ell)&=&\{ (x_1,\ldots,x_{n-1})\in \mathcal{C}_d(\ell)\,|\,x_{n-2},x_{n-1}\in S^1\}
\end{eqnarray*}
where $S^1\subset S^{d-2}$ is the standard inclusion using the first two coordinates. For generic $\ell$ this is a submanifold of $\mathcal{C}_d(\ell)$, and the projection $p:\tilde{\mathcal{C}}_d(\ell)\to S^1$ to the last coordinate has $-1\in S^1$ as a regular value. In particular, for $\theta\in S^1$ close to $-1$, we get that
\begin{eqnarray*}
 \tilde{\mathcal{C}}_d^\theta(\ell)&=&p^{-1}(\{\theta\})
\end{eqnarray*}
is diffeomorphic to $p^{-1}(\{-1\})$. Furthermore, this diffeomorphism can be chosen to be $\SO(d-2)$-equivariant, where $\SO(d-2)$ fixes the first two coordinates.

Fix $\theta_0\in S^1$ close enough to $-1$ so that this diffeomorphism exists. This gives rise to a $\SO(d-2)$-equivariant map
\[
 \tilde{H}\colon \tilde{\mathcal{C}}_d^{\theta_0}(\ell) \times [0,1] \to \mathcal{C}_d(\ell)
\]
which at time $t$ is inclusion to $p^{-1}(\{\theta_t\})$, where $\theta_t\in S^1$ starts at $\theta_0$ and ends at $-1$.

Define
\begin{eqnarray*}
 \mathcal{M}^{\theta_0}_d(\ell)&=&\tilde{\mathcal{C}}^{\theta_0}_d(\ell)/\SO(d-2),
\end{eqnarray*}
which is easily seen to be a pseudo-manifold of dimension $\mathbf{d}^{n-4}_d$ that represents an element $[\mathcal{M}^{\theta_0}_d(\ell)]\in I^{\mathbf{p}_1}H_{\mathbf{d}^{n-4}_d}(\mathcal{M}_d(\ell))$. Furthermore, we get the stratum preserving homotopy
\[
 H\colon \mathcal{M}_d^{\theta_0}(\ell)\times [0,1]\to \mathcal{M}_d(\ell)
\]
starting with the inclusion and ending with a surjection $H_1:\mathcal{M}_d^{\theta_0}(\ell)\to \mathcal{M}_d(\ell^-)$. For a generic set of points in $\mathcal{M}_d(\ell^-)$ the map $H_1$ is a $2:1$-map: if $x_{n-2}\not=\pm x_{n-1}$ for $x=[x_1,\ldots,x_{n-1}]\in \mathcal{M}_d(\ell^-)$ there are two points in $\mathcal{M}_d^{\theta_0}(\ell)$ send to $x$ coming from the $\Z/2$-action on $\mathcal{M}_d^{\theta_0}(\ell)$ which flips the second coordinate.

For even $d$ the $\Z/2$-action is orientation preserving, as we can rotate the second and the last coordinate by an angle of $\pi$, and for a generic point $x\in \mathcal{M}_d^{\theta_0}(\ell)$ there are $d-2$ elements which do not have a non-zero entry in the last coordinate. This means that $H_1$ is a degree $2$ map from $\mathcal{M}_d^{\theta_0}(\ell)$ to $\mathcal{M}_d(\ell^-)$, which implies that
\begin{eqnarray*}
 [\mathcal{M}_d^{\theta_0}(\ell)]&=& 2 [\mathcal{M}_d(\ell^-)].
\end{eqnarray*}
For $\eta$ near $+1\in S^1$ we can do a similar construction, showing that
\begin{eqnarray*}
 [\mathcal{M}_d^{\eta_0}(\ell)]&=& 2 [\mathcal{M}_d(\ell^+)].
\end{eqnarray*}
Let $J\subset S^1$ be the interval in the upper half plane with endpoints $\theta_0$ and $\eta_0$. Then $p^{-1}(J)\subset \tilde{\mathcal{C}}_d(\ell)$ is a cobordism between $\tilde{\mathcal{C}}_d^{\theta_0}(\ell)$ and $\tilde{\mathcal{C}}_d^{\eta_0}(\ell)$. Passing to the quotient under the $\SO(d-2)$ action shows that $[\mathcal{M}_d^{\theta_0}(\ell)]=[\mathcal{M}_d^{\eta_0}(\ell)]$.
\end{proof}

\begin{example}
 For $\ell^n$ from Example \ref{shape_space} with even $d$ we have $\mathcal{M}_d((\ell^n)^+)=\emptyset$, so over the rationals $I^{\mathbf{p}_1}H_{\mathbf{d}^{n-1}_d}(\mathcal{M}_d(\ell^n);\Q)=0$, and the rational intersection ring is trivial.

For $d=4$ we can actually show that
\begin{eqnarray*}
 I^{\mathbf{p}_1}H_{\mathbf{d}^{n-1}_4}(\mathcal{M}_4(\ell^n))&=& \Z/2\Z.
\end{eqnarray*}
In \cite[\S 5.2]{kebacl} it is shown that $H_{\mathbf{d}^{n-1}_4}(\Sigma^{n-1}_3)\cong \Z/2\Z$ and $\Sigma^{n-2}_3\approx \mathcal{M}_4(\ell^{n-1})$ represents the generator. The natural map
\[
 I^{\mathbf{p}_1}H_{\mathbf{d}^{n-1}_4}(\mathcal{M}_4(\ell^n)) \longrightarrow H_{\mathbf{d}^{n-1}_4}(\Sigma^{n-1}_3)
\]
is therefore surjective, and since the former is generated by $[\mathcal{M}_4(\ell^{n-1})]$ which has order $2$, it has to be an isomorphism.

We can now repeat the argument of Example \ref{shape_space} to show that
\begin{eqnarray*}
 I^{\mathbf{p}_s}H_{\mathbf{d}^{n-s}_4}(\mathcal{M}_4(\ell^n))&\cong & \Z/2\Z
\end{eqnarray*}
generated by $[\mathcal{M}_4(\ell^{n-s})]$ for $s=1,\ldots,n-5$, and the unreduced intersection ring satisfies
\begin{eqnarray*}
 I\!H^{3\ast}(\mathcal{M}_4(\ell))&\cong & \Z[X]/\langle X^{n-4},2X\rangle.
\end{eqnarray*}
We expect the analogous statement to hold also for $d\geq 6$ even, but the homology calculations for $\Sigma^{n-1}_{d-1}$ are somewhat more involved, compare \cite[\S 5]{kebacl}.
\end{example}

Recall the length vector $\ell_J$ for $J\subset \{1,\ldots,n-1\}$ defined in Section \ref{section_interring}. To simplify notation, we will write
\begin{eqnarray*}
 Y_i&=&[\mathcal{M}_d(\ell_{\{i\}})]\,\,\,\in\,\,\, I\!H^{d-1}(\mathcal{M}_d(\ell))
\end{eqnarray*}
for $i=1,\ldots,n-1$. Note that if $\{i,n\}$ is $\ell$-long, then $Y_i=0$. If $J\cup \{n\}$ is $\ell$-short, we also write
\begin{eqnarray*}
 Y_J&=& Y_{i_1}\cdots Y_{i_k}\,\,\,\in\,\,\, I\!H^{(d-1)k}(\mathcal{M}_d(\ell))
\end{eqnarray*}
for $J=\{i_1,\ldots,i_k\}$ with $i_1<\cdots < i_k$. By the properties of the intersection product we have $Y_J=\pm [\mathcal{M}_d(\ell_J)]$.

\begin{proposition}\label{linearly_ind}
 Let $\ell\in \R^n$ be a generic and ordered length vector with $n\geq d+2$, $d\geq 4$ and $k=a_1(\ell)$. Then $Y_1,\ldots,Y_k$ are linearly independent elements of $I\!H^{d-1}(\mathcal{M}_d(\ell))$.
\end{proposition}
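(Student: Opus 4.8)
The plan is to reduce the statement to $\Q$-linear independence and then detect the $Y_i$ by an intersection pairing against explicitly constructed dual classes.

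\emph{Identifying the non-zero $Y_i$.} Since $\ell$ is ordered, shortness of a two-element set $\{i,n\}$ is monotone in $i$: for $j\le i$,
\[
 \sum_{l\notin\{j,n\}}\ell_l \;=\; \sum_{l\notin\{i,n\}}\ell_l+\ell_i-\ell_j \;\ge\; \sum_{l\notin\{i,n\}}\ell_l,
\]
so $\{i,n\}$ $\ell$-short forces $\{j,n\}$ $\ell$-short. Hence the indices $i\in\{1,\dots,n-1\}$ with $\{i,n\}$ $\ell$-short are exactly $i=1,\dots,a_1(\ell)$, the number of them being $|\mathcal{S}_1(\ell)|=a_1(\ell)$. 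For $i\le a_1(\ell)$ the class $Y_i=[\mathcal{M}_d(\ell_{\{i\}})]$ is the genuine fundamental class of the non-empty codimension-$(d-1)$ subpseudomanifold $\mathcal{M}_d(\ell_{\{i\}})=\{[x_1,\dots,x_n]\in\mathcal{M}_d(\ell)\mid x_i=x_n\}$, whereas $Y_i=0$ for $i>a_1(\ell)$. So the claim is the $\Z$-linear independence of these $a_1(\ell)$ fundamental classes in $I^{\mathbf{p}_1}H_{\mathbf{d}^{n-1}_d}(\mathcal{M}_d(\ell))$, a group that by Corollary~\ref{odd_generators} is $\Z^{1+a_1(\ell)}$ when $d$ is odd, and that by the exact sequence following that corollary has $\Q$-dimension $a_1(\ell)$ when $d$ is even. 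As $\Q$-linear independence implies $\Z$-linear independence, it suffices to work over $\Q$.

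\emph{The pairing argument.} By the Lefschetz duality for pseudomanifolds with boundary recalled in Section~\ref{section_linkspaces}, over a field $\F$ there is a perfect pairing
\[
 I^{\mathbf{p}_1}H_{\mathbf{d}^{n-1}_d}(\mathcal{M}_d(\ell);\F)\times I^{\mathbf{t}_n-\mathbf{p}_1}H_{d-1}(\mathcal{M}_d(\ell);\F)\longrightarrow\F .
\]
I would construct, for each $j\le a_1(\ell)$, a $(d-1)$-dimensional $(\mathbf{t}_n-\mathbf{p}_1)$-allowable cycle $Z_j$ with intersection number $\delta_{ij}$ against $\mathcal{M}_d(\ell_{\{i\}})$. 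The cycle $Z_j$ comes from the normal slice to $\mathcal{M}_d(\ell_{\{j\}})$ at a generic smooth configuration $x^{\ast}$ with $x^{\ast}_j=x^{\ast}_n$ but $x^{\ast}_i\ne x^{\ast}_n$ for all $i\ne j$: this slice is a $(d-1)$-disc meeting $\mathcal{M}_d(\ell_{\{j\}})$ transversally in the single point $x^{\ast}$, and one closes it up inside $\mathcal{M}_d(\ell)\setminus\mathcal{M}_d(\ell_{\{j\}})$, keeping it moreover inside $\{x_i\ne x_n\}$ for every $i\ne j$. Then $\mathcal{M}_d(\ell_{\{i\}})\cap Z_j=\emptyset$ set-theoretically for $i\ne j$, so the Gram matrix $(\langle Y_i,Z_j\rangle)$ is the identity and the $Y_j$ are $\Q$-linearly independent. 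A variant avoiding the hand construction of the $Z_j$ is to match the $Y_i$ with generators of $I^{\mathbf{p}_1}H_{\mathbf{d}^{n-1}_d}(\mathcal{M}^{m-1})\cong\Z^{1+a_1(\ell)}$ from Lemma~\ref{almost_calc}: each of the $1+a_1(\ell)$ index-$(n-4)(d-1)$ critical manifolds of $\bar F$ contributes one generator, and by tracing through the inductive construction of $\bar F$ in Section~\ref{section_morse} one identifies $a_1(\ell)$ of those generators with the images of $Y_1,\dots,Y_{a_1(\ell)}$ under $I^{\mathbf{p}_1}H_\ast(\mathcal{M}^{m-1})\to I^{\mathbf{p}_1}H_\ast(\mathcal{M}_d(\ell))$.

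\emph{The main obstacle.} The hard case is even $d$. Here $I^{\mathbf{p}_1}H_{\mathbf{d}^{n-1}_d}(\mathcal{M}_d(\ell))$ need not be torsion free — Lemma~\ref{2times} already gives the relation $2[\mathcal{M}_d(\ell^-)]=2[\mathcal{M}_d(\ell^+)]$ — so duality is only available with field coefficients and one can only conclude independence over $\Q$, which (as noted above) is enough. The genuinely technical point is checking that the capped slices $Z_j$ are $(\mathbf{t}_n-\mathbf{p}_1)$-allowable intersection cycles, i.e.\ that they meet each singular stratum $\mathcal{N}_c(\ell)$ only in the dimension permitted by $\mathbf{t}_n-\mathbf{p}_1$; this is verified against the explicit local models of Lemmas~\ref{localmanifold} and~\ref{localstrat}, and arranging the capping disc so that this holds (rather than merely so that the boundary sphere is filled) is where the bulk of the work lies. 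In the Morse-theoretic variant the analogous difficulty is computing the effect on $I^{\mathbf{p}_1}$ of the handle attachments from the critical manifolds produced in the inductive construction of $\bar F$, for which one needs Proposition~\ref{morse_data} together with Lemma~\ref{switch_perv}.
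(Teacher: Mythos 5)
Your strategy is exactly the paper's: Proposition \ref{linearly_ind} is deduced there from the existence, for each $j\leq a_1(\ell)$, of a class $Z_{\{j\}}$ of complementary degree with $Z_{\{j\}}\cdot Y_i=\delta_{ij}$ (this is Lemma \ref{poin_dual}), and your reduction to $\Q$-coefficients and the perversity bookkeeping for the pairing are both fine. The problem is that you have not produced the $Z_j$: you list the properties they must have and then flag their construction as ``where the bulk of the work lies''. Since the paper's proof of the proposition consists of nothing but that construction, what you have written is a restatement of the difficulty rather than a proof. Concretely, the missing step is this: the normal slice $D\cong D^{d-1}$ at $x^{\ast}$ exists and meets $\mathcal{M}_d(\ell_{\{j\}})$ once transversally, but you need $\partial D\cong S^{d-2}$ to bound a $(d-1)$-chain inside $\mathcal{M}_d(\ell)\setminus\bigl(\mathcal{M}_d(\ell_{\{j\}})\cup\bigcup_{i\neq j}\mathcal{M}_d(\ell_{\{i\}})\bigr)$ which is moreover allowable. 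Nothing guarantees a priori that $[\partial D]$ vanishes in the $(d-2)$-nd homology of that complement; this is a global statement about the linkage space and does not follow from transversality or from the local models of Lemmas \ref{localmanifold} and \ref{localstrat}.

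The paper fills this gap by a global construction rather than a slice-plus-cap argument: it builds a section $f$ of the robot-arm map $A\colon (S^{d-1})^{k}\to\R^d$ over a suitable ball $B$ (the ``snake charmer'' followed by a controlled rotation) and takes $Z_{\{j\}}$ to be the image of an entire sphere, $x\mapsto\bigl(f(-\ell_ne_1-\ell_jx),x\bigr)$. The freedom in choosing $f$ is used to keep every coordinate of $f(y)$ away from $e_1$, so the sphere misses every $\mathcal{M}_d(\ell_{\{i\}})$ with $i\neq j$, and, using that there are at least $d$ free links, to keep the image in the top stratum altogether; hence $Z_{\{j\}}$ even lies in $I^{\mathbf{0}}H_{d-1}(\mathcal{M}_d(\ell))$ and one gets independence over $\Z$ directly, not only over $\Q$. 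To complete your write-up you would need either to carry out such a section construction or to prove that the boundary of your normal slice bounds where you need it to; as it stands the central existence claim is unproved. The Morse-theoretic ``variant'' has the same status: identifying which of the $1+a_1(\ell)$ generators of $I^{\mathbf{p}_1}H_{\mathbf{d}^{n-1}_d}(\mathcal{M}^{m-1})$ correspond to the $Y_i$, and that their images remain independent in $\mathcal{M}_d(\ell)$, is again the whole problem.
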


\begin{proof}
 This follows directly from Lemma \ref{poin_dual} below, the proof of this Lemma does not require any further material from this section.
\end{proof}

%
%
%

\begin{theorem}\label{even_generators}
 Let $d\geq 4$ be even, $n\geq d+2$ and $\ell\in \R^n$ a generic, ordered, $d$-regular length vector. Then $I\!H^{d-1}(\mathcal{M}_d(\ell);\Q)$ is generated by $Y_1,\ldots,Y_{a_1(\ell)}$.
\end{theorem}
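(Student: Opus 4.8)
The plan is to pin down $\dim_{\Q} I\!H^{d-1}(\mathcal M_d(\ell);\Q)$ exactly and then invoke the linear independence already recorded. Recall that $I\!H^{d-1}(\mathcal M_d(\ell)) = I^{\mathbf p_1}H_{\mathbf d^{n-1}_d}(\mathcal M_d(\ell))$, and that the long exact sequence of the pair $(\mathcal M_d(\ell),\mathcal M^{m-1})$ for the Morse filtration of Section~\ref{section_morse}, combined with Lemma~\ref{almost_calc} and Proposition~\ref{morse_data}, produces for even $d$ the exact sequence
\[
 \Z \;\xrightarrow{\ \phi\ }\; \Z^{1+a_1(\ell)} \;\longrightarrow\; I\!H^{d-1}(\mathcal M_d(\ell)) \;\longrightarrow\; 0
\]
noted just before Corollary~\ref{odd_generators}. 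Here $\Z^{1+a_1(\ell)} = I^{\mathbf p_1}H_{\mathbf d^{n-1}_d}(\mathcal M^{m-1})$ is free on the fundamental classes of the $1+a_1(\ell)$ codimension-$(d-1)$ critical submanifolds of $\bar F$ lying below the absolute maximum, and these classes are $Y_1,\dots,Y_{a_1(\ell)}$ together with one further free generator, which may be identified with $[\mathcal M_d(\ell^-)]$.

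The decisive point is to show that $\phi$ is nonzero after tensoring with $\Q$. I would do this by identifying $\phi$: a generator of the source $\Z$ is the relative fundamental class of the Morse datum $(\mathcal N^{n-3},\partial\mathcal N^{n-3})$ of the absolute maximum, i.e.\ the top class of Proposition~\ref{morse_data}, and $\phi$ is the connecting homomorphism, which carries it to the class in $\mathcal M^{m-1}$ represented by the ``link'' $\partial\mathcal N^{n-3}$ of the top critical point. The content of Lemma~\ref{2times} is precisely the analysis of this link: the stratified homotopy built there pushes a neighbourhood of the absolute maximum down onto $\mathcal M_d(\ell^-)$, and for even $d$ it does so by a map of degree $2$. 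Hence $\phi$ sends a generator to $\pm 2[\mathcal M_d(\ell^-)]$, which is non-torsion in the free group $\Z^{1+a_1(\ell)}$ because $[\mathcal M_d(\ell^-)]$ is one of its free generators. Therefore $\operatorname{rank}(\operatorname{im}\phi)=1$, and
\[
 \dim_{\Q} I\!H^{d-1}(\mathcal M_d(\ell);\Q) = (1+a_1(\ell)) - 1 = a_1(\ell).
\]

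To conclude, Proposition~\ref{linearly_ind} shows that $Y_1,\dots,Y_{a_1(\ell)}$ are $\Z$-linearly independent in $I\!H^{d-1}(\mathcal M_d(\ell))$, so their images stay $\Q$-linearly independent in $I\!H^{d-1}(\mathcal M_d(\ell);\Q) = I\!H^{d-1}(\mathcal M_d(\ell))\otimes\Q$; being $a_1(\ell)$ linearly independent vectors in an $a_1(\ell)$-dimensional $\Q$-vector space, they form a basis, so in particular they generate. The hypothesis that $\ell$ is $d$-regular, equivalently $\mathcal S_{n-d}(\ell)=\emptyset$, is what keeps the count $1+a_1(\ell)$ and the list $Y_1,\dots,Y_{a_1(\ell)}$ the correct ones, none of the merged linkage spaces $\mathcal M_d(\ell_J)$ degenerating to a disc that would be invisible to homology.

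The main obstacle is the middle paragraph: rigorously matching the Morse-theoretic connecting homomorphism $\phi$ with the cobordism-and-degree computation of Lemma~\ref{2times}, i.e.\ verifying that in intersection homology the link of the absolute maximum of $\bar F$ really represents $2[\mathcal M_d(\ell^-)]$ when $d$ is even, while keeping careful track of the perversities involved and of the degree contribution of the critical sphere $S^{d-2}$. Once $\phi\otimes\Q\neq 0$ is secured, the exact-sequence and dimension bookkeeping above is routine.
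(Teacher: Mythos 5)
Your overall bookkeeping — pin down $\dim_{\Q}I\!H^{d-1}(\mathcal{M}_d(\ell);\Q)=a_1(\ell)$ and then quote Proposition \ref{linearly_ind} — is exactly the paper's strategy, and your reduction to showing $\phi\otimes\Q\neq 0$ is correct. But the way you propose to establish that nonvanishing has a genuine gap, which you partly sense but underestimate. First, a degree error: the source of $\phi$ is $I^{\mathbf{p}_1}H_{\mathbf{d}^{n-1}_d+1}(\mathcal{N}^{n-3},\partial\mathcal{N}^{n-3})$, which sits in degree $\mathbf{d}^{n}_d-(d-2)$, i.e.\ $r=d-2$ in Proposition \ref{morse_data}; its generator is the \emph{bottom} allowable cell $\sigma_1$, not the relative fundamental class ($r=0$). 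The picture of $\phi$ as "attaching map of the top handle hitting the link" is the $d=2$ picture and does not transport to $d\geq 4$, where the class being bounded is a low-dimensional allowable cell whose geometric boundary must be identified by hand. Nothing in Lemma \ref{2times} computes this connecting homomorphism: that lemma produces a cobordism inside $\mathcal{M}_d(\ell)$ proving the relation $2[\mathcal{M}_d(\ell^-)]=2[\mathcal{M}_d(\ell^+)]$ in $I^{\mathbf{p}_1}H_{\mathbf{d}^{n-1}_d}(\mathcal{M}_d(\ell))$; it says nothing about the pair $(\mathcal{M}_d(\ell),\mathcal{M}^{m-1})$.

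Second, and more seriously, your argument cannot be uniform in the way you intend, because the input you want to extract from Lemma \ref{2times} — that $[\mathcal{M}_d(\ell^-)]$ becomes rationally trivial — is only available when $\mathcal{M}_d(\ell^+)=\emptyset$, i.e.\ when $a_1(\ell)<n-1$. In that case the paper argues exactly as you do in spirit, but without touching $\phi$: $2[\mathcal{M}_d(\ell^-)]=2[\mathcal{M}_d(\ell^+)]=0$ forces $[\mathcal{M}_d(\ell^-)]=0$ in $I^{\mathbf{p}_1}H_{\mathbf{d}^{n-1}_d}(\mathcal{M}_d(\ell);\Q)$, so the surjection from $\Q^{1+a_1(\ell)}$ kills a basis vector and the rank drops to $a_1(\ell)$. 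When $a_1(\ell)=n-1$ the set $\{n-1,n\}$ is $\ell$-short, $\mathcal{M}_d(\ell^+)\neq\emptyset$, and $2[\mathcal{M}_d(\ell^+)]$ need not vanish, so your mechanism for making $\phi\otimes\Q$ nonzero disappears. The paper handles this case by a separate induction on $n$: it uses the unmodified Morse--Bott function $F$, identifies the relative term via Lefschetz duality with $I^{\mathbf{p}_1}H_{\ast-(d-1)}(\mathcal{M}_d(\ell^+);\Q)$ (using $\mathbf{t}_n-\mathbf{p}_1=\mathbf{t}_{n-1}$), and closes the count with $1+c_{n-4}+a_1(\ell^+)=a_1(\ell)=n-1$ together with Proposition \ref{linearly_ind}. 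You would need to supply either this induction or an honest computation of the connecting homomorphism in the $a_1(\ell)=n-1$ case; as written, that case is unproved.
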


\begin{proof}
 The proof is by induction on $n$, and it starts with $n=d+2$. Since $\ell$ is $d$-regular, we have $a_2(\ell)=0$. We distinguish the cases $a_1(\ell)<n-1$ and $a_1(\ell)=n-1$. If $a_1(\ell)<n-1$, then $\mathcal{M}_d(\ell^+)=\emptyset$. By Lemma \ref{almost_calc}, $I^{\mathbf{p}_1}H_{\mathbf{d}^{n-1}_d}(\mathcal{M}^{m-1})$ has rank $1+a_1(\ell)$. Furthermore, $[\mathcal{M}_d(\ell^-)]$ is one of the generators. By Lemma \ref{2times} it represents $0$ in $I^{\mathbf{p}_1}H_{\mathbf{d}^{n-1}_d}(\mathcal{M}_d(\ell);\Q)$, and as the natural map $$I^{\mathbf{p}_1}H_{\mathbf{d}^{n-1}_d}(\mathcal{M}^{m-1};\Q)\to I^{\mathbf{p}_1}H_{\mathbf{d}^{n-1}_d}(\mathcal{M}_d(\ell);\Q)$$ is surjective, the result follows from Proposition \ref{linearly_ind}.

If $a_1(\ell)=n-1$, we use the original Morse-Bott function $F$, so that we have the minimum given by $\mathcal{M}_d(\ell^-)$ and the maximum given by $\mathcal{M}_d(\ell^+)$. As $\ell^+$ is $d$-regular, it has to be $(1,\ldots,1,d-1)\in \R^{d+1}$. Let $\mathcal{M}\subset \mathcal{M}_d(\ell)$ be the inverse image of a regular value slightly smaller than the maximum, so that $\mathcal{M}$ has the homotopy type of $\mathcal{M}_d(\ell)-\mathcal{M}_d(\ell^+)$. The standard Morse-Bott function on $\mathcal{M}_d(\ell^+)$ has two critical manifolds, $\mathcal{M}_d(\ell^{+-})$ and one absolute maximum $x\in \mathcal{M}_d(\ell^+)$. In $\mathcal{M}_d(\ell)-\mathcal{M}$ the minimum $\mathcal{M}_d(\ell^{+-})$ has index $d-1$, so it represents the generator of $I^{\mathbf{p}_1}H_{\mathbf{d}^{n-1}_d}(\mathcal{M}_d(\ell)-\{x\},\mathcal{M})$. However, with a construction as in the proof of Lemma \ref{2times} this element vanishes in $I^{\mathbf{p}_1}H_{\mathbf{d}^{n-1}_d}(\mathcal{M}_d(\ell),\mathcal{M};\Q)$. It follows 
that $I^{\mathbf{p}_1}H_{\mathbf{d}^{n-1}_d}(\mathcal{M}_d(\ell);\Q)\cong I^{\mathbf{p}_1}H_{\mathbf{d}^{n-1}_d}(\mathcal{M};\Q)$.

Recall that the Morse-Bott function $F$ has a critical manifold $S_J$ for every $J\subset\{1,\ldots,n-2\}$ such that $J\cup \{n\}$ is short, while $J\cup \{n-1,n\}$ is long. In particular, we get this for $\{i\}$ for all $i=1,\ldots,n-2$. Therefore
\begin{eqnarray*}
 I^{\mathbf{p}_1}H_{\mathbf{d}^{n-1}_d}(\mathcal{M};\Q)&\cong & \Q^{n-1}
\end{eqnarray*}
with one generator being $[\mathcal{M}_d(\ell^-)]$ and the others coming from the critical points. As $Y_1,\ldots,Y_{n-1}$ are linearly independent by Proposition \ref{linearly_ind}, they have to be a basis.

For the induction step, let $n>d+2$. We again use the Morse-Bott function $F$ and the subspace $\mathcal{M}$, which is a pseudomanifold with boundary. Let $\mathcal{N}=\mathcal{M}_d(\ell)-\mathcal{M}^o$, also a pseudomanifold with boundary $\partial\mathcal{N}=\partial\mathcal{M}$. If $a_1(\ell)<n-1$, then $\mathcal{N}=\mathcal{N}^{n-3}$. Also, the rank of $I^{\mathbf{p}_1}H_{\mathbf{d}^{n-1}_d}(\mathcal{M};\Q)$ is $1+a_1(\ell)$ by Lemma \ref{almost_calc}, and one of the generators is given by $[\mathcal{M}_d(\ell^-)]$. This generator dies in $I^{\mathbf{p}_1}H_{\mathbf{d}^{n-1}_d}(\mathcal{M}_d(\ell);\Q)$ by Lemma \ref{2times}. Hence the rank of this group is $a_1(\ell)$, and the result follows by Proposition \ref{linearly_ind}.

If $a_1(\ell)=n-1$, the inclusion $\mathcal{M}_d(\ell^+)\subset \mathcal{N}$ is a homotopy equivalence which induces an isomorphism on intersection homology if we add $\mathbf{p}_1$ to the perversity.

Recall the top perversity $\mathbf{t}_n=(0,\mathbf{c}^n_{d,2}-2,\ldots,\mathbf{c}^n_{d,d-2}-2)$. It is easy to check that $\mathbf{t}_n-\mathbf{p}_1=\mathbf{t}_{n-1}$. Using Lefschetz duality for pseudomanifolds with boundary, we get
\begin{eqnarray*}
 I^{\mathbf{p}_1}H_\ast(\mathcal{M}_d(\ell),\mathcal{M};\Q)&\cong & I^{\mathbf{p}_1}H_\ast(\mathcal{N},\partial\mathcal{N};\Q)\\
&\cong & I^{\mathbf{t}_n-\mathbf{p}_1}H_{\mathbf{d}^n_d-\ast}(\mathcal{N};\Q)\\
&\cong & I^{\mathbf{t}_{n-1}}H_{\mathbf{d}^n_d-\ast}(\mathcal{N};\Q)\\
&\cong & I^{\mathbf{t}_{n-1}-\mathbf{p}_1}H_{\mathbf{d}^n_d-\ast}(\mathcal{M}_d(\ell^+);\Q)\\
&\cong & I^{\mathbf{p}_1}H_{\ast-(d-1)}(\mathcal{M}_d(\ell^+);\Q).
\end{eqnarray*}
The long exact sequence of $(\mathcal{M}_d(\ell),\mathcal{M})$ thus turns into
\begin{multline*}
 \cdots \longrightarrow I^{\mathbf{p}_1}H_{\mathbf{d}^{n-2}_d+1}(\mathcal{M}_d(\ell^+);\Q) \longrightarrow I^{\mathbf{p}_1}H_{\mathbf{d}^{n-1}_d}(\mathcal{M};\Q) \longrightarrow \\
I^{\mathbf{p}_1}H_{\mathbf{d}^{n-1}_d}(\mathcal{M}_d(\ell);\Q) \longrightarrow I^{\mathbf{p}_1}H_{\mathbf{d}^{n-2}_d}(\mathcal{M}_d(\ell^+);\Q) \longrightarrow I^{\mathbf{p}_1}H_{\mathbf{d}^{n-1}_d-1}(\mathcal{M};\Q)
\end{multline*}
and the last term is $0$ by Lemma \ref{almost_calc}.

By induction, the rank of $I^{\mathbf{p}_1}H_{\mathbf{d}^{n-2}_d}(\mathcal{M}_d(\ell^+);\Q)$ is $a_1(\ell^+)$. Note that  $\{i,n-1\}$ is $\ell^+$-short if and only if $\{i,n-1,n\}$ is $\ell$-short. The rank of $I^{\mathbf{p}_1}H_{\mathbf{d}^{n-1}_d}(\mathcal{M};\Q)$ is $1+c_{n-4}$, where $c_{n-4}$ is the number of critical points of index $(n-4)(d-1)$ of $F$. These critical points correspond to sets $\{i\}$ with $\{i,n\}$ $\ell$-short, but $\{i,n-1,n\}$ $\ell$-long. Hence $1+c_{n-4}+a_1(\ell^+)=a_1(\ell)=n-1$. Therefore the rank of $I^{\mathbf{p}_1}H_{\mathbf{d}^{n-1}_d}(\mathcal{M}_d(\ell))$ can be at most $n-1$. By Proposition \ref{linearly_ind}, it has to be $n-1$.
\end{proof}

\section{Calculation of the intersection ring}

Whenever $J\subset \{1,\ldots,n-1\}$ satisfies $J\cup \{n\}$ is $\ell$-short, we have an element $Y_J\in I\!H^{|J|(d-1)}(\mathcal{M}_d(\ell))$. We want these to be linearly independent. In order to do this, we construct explicitly the Poincar\'e dual of $\mathcal{M}_d(\ell_J)$.

\begin{lemma}\label{poin_dual}
 Let $n\geq d+1\geq 5$ and $\ell\in \R^n$ be a generic, ordered, $d$-regular length vector, and $J\subset \{1,\ldots,n-1\}$ with $J\cup \{n\}$ being $\ell$-short. Then there exists an element $Z_J\in I^\mathbf{0}H_{|J|(d-1)}(\mathcal{M}_d(\ell))$ with
\begin{eqnarray*}
 Z_J \cdot Y_K &=&\left\{ \begin{array}{cl} 1 & J=K \\ 0 & \mbox{\rm else} \end{array}\right.
\end{eqnarray*}

\end{lemma}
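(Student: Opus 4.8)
The plan is to construct $Z_J$ as an explicit cycle supported on a subspace of $\mathcal{M}_d(\ell)$ which is "geometrically transverse" to every $\mathcal{M}_d(\ell_K)$ with $|K|=|J|$, meeting $\mathcal{M}_d(\ell_J)$ in a single point and missing $\mathcal{M}_d(\ell_K)$ entirely for $K\neq J$. Write $J=\{j_1,\ldots,j_k\}$. Recall that a point of $\mathcal{M}_d(\ell_J)$ is a configuration $[x_1,\ldots,x_n]$ in which all links in $J\cup\{n\}$ point in a common direction. The natural candidate for (the support of) $Z_J$ is the locus where, conversely, the links \emph{outside} $J$ are constrained: fixing the directions of the links indexed by the complement of $J$ in $\{1,\ldots,n-1\}$ to generic values and letting only the links in $J\cup\{n\}$ vary. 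More precisely, I would first choose a generic point $p=[p_1,\ldots,p_n]\in\mathcal{M}_d(\ell_J)$, and consider the subspace $W_J\subset\mathcal{M}_d(\ell)$ obtained by freezing $p_i$ for $i\notin J$ (and $i\neq n$) and letting $x_j$ for $j\in J\cup\{n\}$ range over all ways of closing up the linkage; this $W_J$ has dimension $|J|(d-1)$ by the same dimension count as in Lemma \ref{localmanifold}, it contains $p$, and its fundamental class (it is orientable and closed as a pseudomanifold, or can be made so after a small perturbation) gives the desired homology class.

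The key steps, in order, are: (i) verify that $W_J$, with the stratification induced from $\mathcal{M}_d(\ell)$, is a compact orientable pseudomanifold of dimension $|J|(d-1)$ whose intersection with the singular set $\mathcal{N}_{d-c}(\ell)$ has the dimension forced by the $\mathbf 0$-perversity, so that $[W_J]\in I^{\mathbf 0}H_{|J|(d-1)}(\mathcal{M}_d(\ell))$ is well-defined — this is where $d$-regularity is used, to guarantee that $W_J$ genuinely carries a nonzero fundamental class (it is not a disc) and to control how deep into the strata it can descend; (ii) check that for generic choice of the frozen directions $p_i$, $W_J$ is transverse to each $\mathcal{M}_d(\ell_K)$ in the sense of Goresky–MacPherson, so that $Z_J\cdot Y_K=[W_J\cap\mathcal{M}_d(\ell_K)]$ may be computed as an intersection number; (iii) compute that intersection set. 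For $K=J$: $W_J\cap\mathcal{M}_d(\ell_J)$ consists of configurations where all of $J\cup\{n\}$ are parallel \emph{and} the complementary links sit at the frozen generic positions $p_i$; generically this forces exactly the one point $p$, and one checks the local intersection is transverse with sign $+1$, giving $Z_J\cdot Y_J=1$. For $K\neq J$ with $|K|=|J|$: one shows $W_J\cap\mathcal{M}_d(\ell_K)=\emptyset$ for generic $p$, because requiring the links in $K\cup\{n\}$ to be parallel while those of $\{1,\ldots,n-1\}\setminus J$ are frozen at generic positions over-determines the linkage (there is at least one index in $K\setminus J$, whose frozen direction is then generic rather than parallel to $x_n$). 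This gives $Z_J\cdot Y_K=0$, and since $Y_K=\pm[\mathcal{M}_d(\ell_K)]$ the pairing statement follows; note the hypothesis $n\geq d+1\geq5$ ensures $|J|\geq1$ makes sense and the relevant moduli spaces have positive dimension.

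The main obstacle I expect is step (i), specifically checking the allowability (dimension-of-intersection-with-strata) condition for the $\mathbf 0$-perversity together with orientability of $W_J$ as a pseudomanifold: one must verify that $W_J$ meets $\mathcal{N}_{d-c}(\ell)$ in a subset of dimension at most $\dim W_J-\mathbf{c}^n_{d,c}$, which is exactly the borderline $\mathbf 0$-allowability condition and must hold also for $\partial$ of a representing chain — here I would argue by a local computation analogous to Lemma \ref{localstrat}, identifying $W_J\cap\mathcal{N}_{d-c}(\ell)$ with a linkage-type moduli space of the correct (co)dimension, using $d$-regularity and the shortness of $J\cup\{n\}$ to rule out the degenerate possibility that $W_J$ is entirely swallowed by a stratum or collapses to a disc. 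A secondary subtlety is the transversality in step (ii): since we are intersecting inside a pseudomanifold, I would either invoke the stratified general-position results behind the Goresky–MacPherson intersection pairing directly, perturbing the frozen directions $p_i$ within the top stratum, or reduce to the smooth locus since both $W_J$ and $\mathcal{M}_d(\ell_K)$ meet only in the manifold part of $\mathcal{M}_d(\ell)$ for generic $p$.
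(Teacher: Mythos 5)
There is a genuine gap, and it is in the very first step: the proposed dual cycle $W_J$ does not have dimension $|J|(d-1)$. If you freeze $x_i=p_i$ for the $n-1-|J|$ indices $i\notin J\cup\{n\}$ and let the links in $J\cup\{n\}$ range over all ways of closing up, those $|J|+1$ links are subject to the closure condition $\sum_{j\in J\cup\{n\}}\ell_jx_j=v$ with $v=-\sum_{i\notin J\cup\{n\}}\ell_ip_i$ fixed; since the frozen $p_i$ generically span $\R^d$ there is no residual $\SO(d)$-freedom, so $\dim W_J=(|J|+1)(d-1)-d=|J|(d-1)-1$, one short of the complementary dimension. Worse, with your choice of $p\in\mathcal{M}_d(\ell_J)$ the situation degenerates completely: for such $p$ one has $\|v\|=\ell_n+\sum_{j\in J}\ell_j$, the maximal reach of the sub-linkage $J\cup\{n\}$, so the only closing configuration is the fully aligned one and $W_J=\{p\}$ is a single point. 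If instead you pick $p$ generic in the top stratum, $W_J$ has dimension $|J|(d-1)-1$ but then misses $\mathcal{M}_d(\ell_J)$ entirely (alignment of $J\cup\{n\}$ would again force $\|v\|$ to equal the maximal reach). Either way there is no class in $I^{\mathbf 0}H_{|J|(d-1)}$ pairing to $1$ with $Y_J$. The underlying conceptual error is that the normal directions to $\mathcal{M}_d(\ell_J)$ are exactly the variations of the links in $J$ away from alignment, so the dual cycle must let those links vary \emph{freely over the whole torus} $(S^{d-1})^{|J|}$ with no closure constraint imposed on them, and use the \emph{complementary} links to absorb the closure condition.

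That is what the paper does: after arranging $J=\{k+1,\ldots,n-1\}$ (with $k\geq d$ by $d$-regularity), it constructs a section $f\colon B\to (S^{d-1})^k$ of the robot-arm map $A(x_1,\ldots,x_k)=\sum\ell_ix_i$ over the ball $B$ of radius $\ell_{k+1}+\cdots+\ell_{n-1}$ centred at $-\ell_ne_1$ (the ``snake charmer'' construction, which uses the shortness of $J\cup\{n\}$ to verify $B$ lies in the reachable annulus), and defines $F(x_{k+1},\ldots,x_{n-1})=(f(-\ell_ne_1-\sum\ell_ix_i),x_{k+1},\ldots,x_{n-1})$. The image is an embedded $|J|(d-1)$-torus, which can be pushed off $\mathcal{N}_{d-2}(\ell)$ (using $k\geq d$) so that $\mathbf 0$-allowability is immediate rather than a borderline stratum-dimension check; it meets $\mathcal{M}_d(\ell_J)$ transversely in the single point $F(e_1,\ldots,e_1)$ and avoids every $\mathcal{M}_d(\ell_K)$ with $K\neq J$ because the section is built so that none of the links $1,\ldots,k$ ever points to $e_1$. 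Constructing that section is the real content of the proof and is absent from your proposal.
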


\begin{proof}
 We want to construct an embedding $(S^{d-1})^{|J|} \to \mathcal{M}_d(\ell)-\mathcal{N}_{d-2}(\ell)$ which has empty intersection with $\mathcal{M}_d(\ell_K)$ for every $K\subset \{1,\ldots,n-1\}$ with $|K|=|J|$, $K\cup \{n\}$ $\ell$-short and $K\not=J$, and which intersects $\mathcal{M}_d(\ell_J)$ transversely in exactly one point. By the standard properties of the intersection pairing this will prove the Lemma.

After reordering, we can assume that $J=\{k+1,\ldots,n-1\}$. By $d$-regularity, we have $k\geq d\geq 4$. By genericity we can assume that $\ell_1<\ell_2<\cdots <\ell_k<\ell_n$. We have
\begin{eqnarray*}
 \ell_1+\cdots + \ell_{k-1}-\ell_k & > & \ell_{k+1}+\cdots + \ell_{n-1} - \ell_n
\end{eqnarray*}
as
\begin{eqnarray*}
 \ell_1+\cdots + \ell_{k-1} + \ell_n & > & \ell_{k+1}+\cdots + \ell_{n-1} + \ell_k
\end{eqnarray*}
which is true because $J\cup \{n\}$ is $\ell$-short and $\ell_n>\ell_k$.

Let $B\subset \R^d$ be the closed ball of radius $\ell_{k+1}+\cdots+\ell_{n-1}$ centered at $-\ell_n \cdot e_1$. We want to find a map $f\colon B\to (S^{d-1})^k$ with $A\circ f = {\rm id}_B$, where $A\colon(S^{d-1})^k \to \R^d$ is given by
\begin{eqnarray*}
 A(x_1,\ldots,x_k)&=& \sum_{i=1}^k \ell_i x_i.
\end{eqnarray*}
The map $F\colon(S^{d-1})^{|J|}\to \mathcal{C}_d(\ell)$ given by
\begin{eqnarray}\label{map_needed}
 F(x_{k+1},\ldots,x_{n-1})&=&\left(f\left(-\ell_n\cdot e_1-\sum_{i=k+1}^{n-1}\ell_ix_i\right),x_{k+1},\ldots,x_{n-1}\right)
\end{eqnarray}
is then nearly the map that we need.

In order to construct $f$, let us first consider the case $d=2$. We begin by constructing a `Snake charmer', a map $\gamma:[0,1] \to (S^{d-1})^k$ such that 
\begin{eqnarray}\label{snakes_charm}
A\circ \gamma (t) &=& (tC_1 + (1-t)C_2)\cdot e_1
\end{eqnarray}
where
\[
 -(\ell_1+\cdots +\ell_k) \,\,\, < \,\,\, C_1 \,\,\, < \,\,\, -(\ell_{k+1}+\cdots + \ell_n
\]
and
\[
 \ell_{k+1} +\cdots + \ell_{n-1} - \ell_n \,\,\, < \,\,\, C_2 \,\,\, < \,\,\, \ell_1+\cdots + \ell_{k-1} - \ell_k.
\]
To do this, we can start in the position $(e_1,\ldots,e_1,-e_1)\in (S^{d-1})^k$ and then start to rotate the $(k-1)$-th coordinate counterclockwise into the upper half plane. At the same time, the $k$-th coordinate rotates counterclockwise so that the robot arm $\ell_{k-1}x_{k-1}+\ell_kx_k$ remains on the first axis. We continue this until the $(k-1)$-th coordinate is nearly rotated to $-e_1$. After that we rotate the $(k-2)$-th coordinate counterclockwise into the upper half plane and rotate the $k$-th and $(k-1)$-th coordinate so that the robot arm consisting of the last three coordinates ramins on the first axis. Here the $k$-th and $(k-1)$-th coordinates are rotated by the same amount, so that these two links remain stiff. When the $(k-2)$-th coordinate nearly reached the $-e_1$ position, we start to rotate the $(k-3)$-th coordinate counterclockwise, using the last three coordinates to keep the robot arm on the first axis. We continue this until all coordinates are near $-e_1$. After reparametrisation, 
we have the desired snake charmer.

We actually do not want any of the links to point to $e_1$. So rather than starting with $(e_1,\ldots,e_1,-e_1)$ we start at a position $(x^\ast,\ldots,x^\ast,y^\ast)\in (S^{d-1})^k$ with $x^\ast$ in the upper half plane so close to $e_1$ that the resulting snake charmer still satisfies (\ref{snakes_charm}).
\begin{figure}[ht]
\begin{center}
\includegraphics[height=6cm,width=6cm]{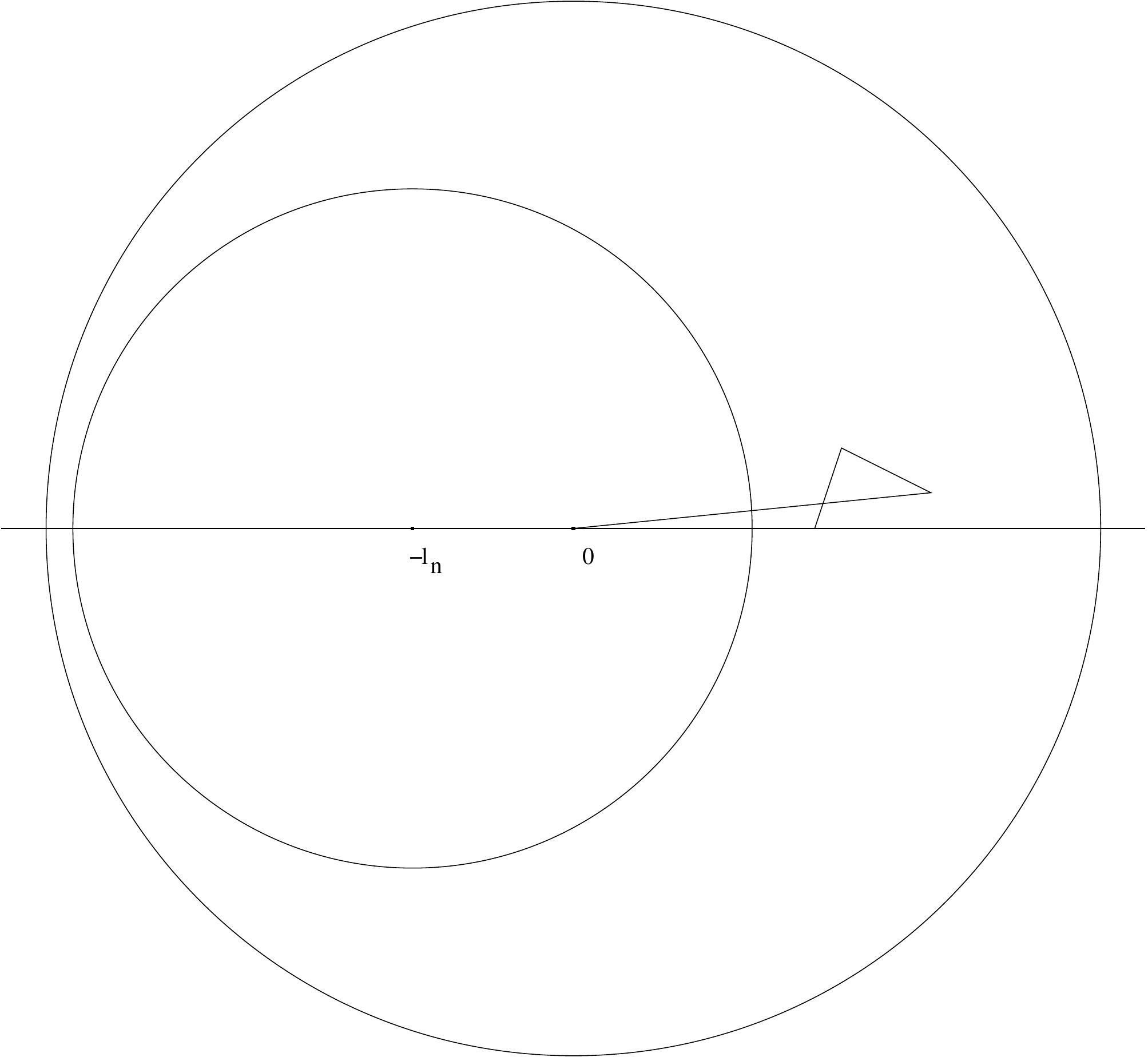}
\caption{\label{fig_snake}Snake charming along the $x$-axis.}
\end{center}
\end{figure}
Figure \ref{fig_snake} indicates this map. The outer circle has radius $\ell_1+\cdots+\ell_k$, so represents all the points the robot arm in the first $k$ coordinates could reach, and the inner circle bounds $B$.

This construction also provides numbers $0=t_0<t_1<\cdots < t_{k-1} = 1$ such that on the interval $[t_i,t_{i+1}]$ the first $k-(i+2)$ coordinates are fixed, the $k-(i+1)$-th coordinate rotates from a position near $e_1$ to a position near $-e_1$, and all other coordinates have negative scalar product with $e_1$. We refer to $A\circ \gamma$ as the robot arm $R$.

We now want to extend $\gamma$ to a map $\Gamma\colon [0,1]\times [-1,1] \to (S^{d-1})^k$ so that $A\circ \Gamma$ restricts to a homeomorphism of a closed subset $\tilde{B}\subset [0,1]\times [-1,1]$ to $B\subset \R^2$. Consider the interval $[t_{k-2},1]\subset [0,1]$. On this interval we want to rotate the robot arm $R$ into the plane. To do this rotation, consider $[-1,1]\subset S^1$ as those points with first coordinate non-positive (and $0$ corresponding to $-e_1\in S^1$). We need a map $h\colon [-1,1] \to \SO(2)$ with $h(x)\cdot (-e_1) = x$, which is no problem as $\SO(2)$ can be identified with $S^1$ via the action. We could then define $\Gamma$ on $[t_{k-2},1]\times [-1,1]$ by $\Gamma(t,x)=h(x)\cdot \gamma(t)$. Doing this will make it difficult to extend $\Gamma$ to $[0,1]\times [-1,1]$, and also the first coordinate can be rotated to $e_1$.

Note that there is a unique point $t^\ast\in (t_{k-2},1)$ with the first coordinate of $\gamma(t^\ast)$ equal to $e_2$. From this point on we do not rotate the first coordinate by the same amount as the other coordinates, and by the time $t=t_{k-2}$, only the coordinates $2,\ldots,k$ will rotate via $h$, the first coordinate will be fixed in the position $x^\ast$. This way we can assure that the first coordinate is always different from $e_1$. The other coordinates are also different from $e_1$, as they start with negative scalar product with $e_1$ and rotate by at most an angle of $\pi/4$.
\begin{figure}[ht]
\begin{center}
\includegraphics[height=4cm,width=6cm]{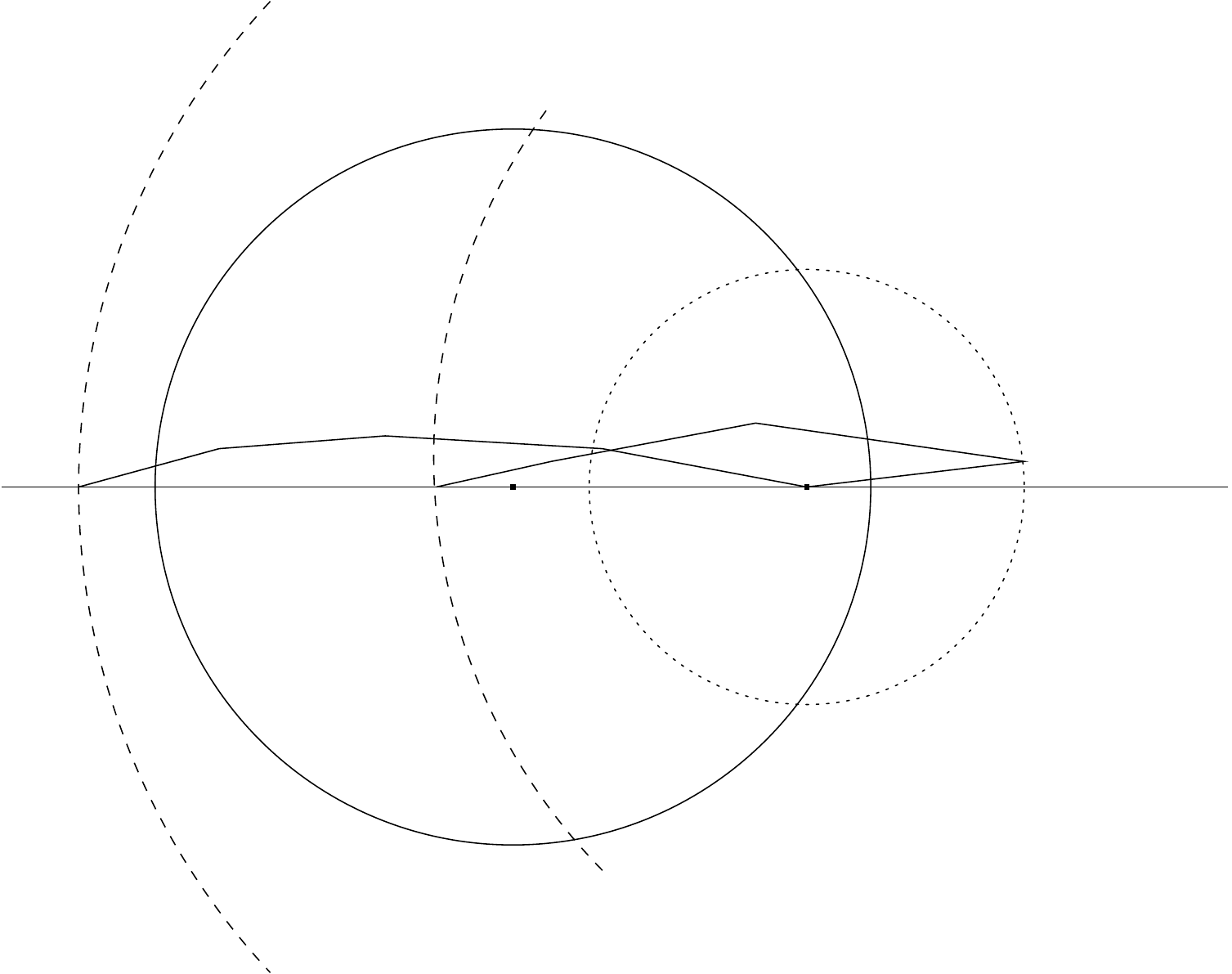}
\caption{\label{fig_snake_rot}Rotating along the origin and along $x^\ast$.}
\end{center}
\end{figure}
Figure \ref{fig_snake_rot} indicates the movement of the robot arm near $t=1$ and near $t=t_{n-2}$. The solid circle bounds $B$, the dotted circle indicates the movements of the first coordinate of the robot arm $R$. By elementary geometry, the map $A\circ \Gamma$ maps a closed subset of $[t_{k-2},1]\times [-1,1]$ homeomorphically onto $B\cap A\circ \Gamma([t_{k-2},1]\times [-1,1])$.

We now extend $\Gamma$ to $[t_{k-3},t_{k-2}]\times [-1,1]$ in basically the same way. We first rotate all of the coordinates $2,\ldots,k$, eventually rotating the second coordinate less, until it is no longer rotated at time $t=t_{k-3}$.

After finitely many steps we have a map $\Gamma\colon [0,1]\times [-1,1] \to (S^{d-1})^k$ and a closed subset $\tilde{B}\subset [0,1]\times [-1,1]$ such that $A\circ \Gamma$ maps $\tilde{B}$ homeomorphically onto $B$.

Note that we were assuming $d=2$, and we need the analogous result for $d\geq 4$. In order to do this, replace $[-1,1]$ by $D^{d-1}=\{(x_1,\ldots,x_d)\in S^{d-1}\,|\,x_1\leq 0\}$. The map $p\colon \SO(d) \to S^{d-1}$ given by $p(A)=A\cdot(-e_1)$ is a fiber bundle, so there exists a section $h\colon D^{d-1} \to \SO(d)$ which then can be used to define $\Gamma$.

The resulting map $F\colon (S^{d-1})^{|J|}\to \mathcal{C}_d(\ell)$ given by (\ref{map_needed}) then intersects $\mathcal{C}_d(\ell_J)$ transversely in exactly one point, and has empty intersection with $\mathcal{C}_d(\ell_K)$ for all $K\subset\{1,\ldots,n-1\}$ with $|K|=|J|$ and $K\not= J$. The point of intersection is given by $F(e_1,\ldots,e_1)$.

Consider the induced map $\bar{F}\colon  (S^{d-1})^{|J|}\to \mathcal{M}_d(\ell)$. With the current construction we have that $\gamma$ has image in $(S^1)^k$. This means that the image of $\bar{F}$ will intersect lower strata of $\mathcal{M}_d(\ell)$. However, if we modify the robot arm $\gamma$ slightly by using the higher dimensions, the map $A\circ \Gamma$ will remain injective and we can repeat the construction so that the image of $\bar{F}$ is in $\mathcal{M}_d(\ell)-\mathcal{N}_{d-2}(\ell)$. This is using that $k\geq d$. The transverse intersection of $F$ with $\mathcal{C}_d(\ell_J)$ induces a transverse intersection of $\bar{F}$ with $\mathcal{M}_d(\ell_J)$ in exactly one point, so that $\bar{F}$ induces an element $Z_J\in I^\mathbf{0}H_{|J|(d-1)}(\mathcal{M}_d(\ell))$ which has the desired properties.
\end{proof}

The fact that the $Z_J$ can be defined with $\mathbf{0}$-perversity means that the $Y_J$ remain linearly independent in ordinary homology. The reduced intersection ring for even $d$ can now be determined.

\begin{definition}
 Let $\Delta$ be a finite abstract simplicial complex, that is, a collection of subsets of a set $\{x_1,\ldots,x_k\}$ which is closed under subsets. The \em exterior face ring \em $\Lambda_\K[\Delta]$ over the commutative ring $\K$ is the quotient of the exterior algebra $\Lambda_\K[X_1,\ldots,X_k]$ by the ideal generated by elements $X_{i_1}\cdots X_{i_m}$ where $\{x_{i_1},\ldots,x_{i_m}\}\notin \Delta$.
\end{definition}

Note that for a length vector $\ell\in \R^n$ the collection
\begin{eqnarray*}
 \mathcal{S}_\cdot(\ell)&=&\bigcup_{k=1}^{n-3} \mathcal{S}_k(\ell)
\end{eqnarray*}
is an abstract simplicial complex with vertex set $\mathcal{S}_1(\ell)$.

\begin{theorem}\label{the_ext_face_ring}
 Let $d\geq 4$ be even, $\ell\in \R^n$ a generic, $d$-regular length vector. Then the reduced intersection ring of $\mathcal{M}_d(\ell)$ with rational coefficients is the exterior face ring
\begin{eqnarray*}
 I\!H^{(d-1)\ast}(\mathcal{M}_d(\ell);\Q)&\cong & \Lambda_\Q[\mathcal{S}_\cdot(\ell)].
\end{eqnarray*}

\end{theorem}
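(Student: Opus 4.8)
The plan is to construct a graded ring homomorphism $\phi\colon \Lambda_\Q[\mathcal{S}_\cdot(\ell)] \to I\!H^{(d-1)\ast}(\mathcal{M}_d(\ell);\Q)$ sending the generator attached to a vertex $\{i,n\}\in\mathcal{S}_1(\ell)$ to $Y_i$, and then to prove it is an isomorphism. First I would make some reductions. Permuting the coordinates of $\ell$ induces a homeomorphism of $\mathcal{M}_d(\ell)$, hence an isomorphism of reduced intersection rings, and merely relabels the simplicial complex $\mathcal{S}_\cdot(\ell)$, so we may assume $\ell$ is ordered and thus $n$-dominated; we may also assume $\mathcal{M}_d(\ell)\neq\emptyset$, and, since for $n\leq d+1$ the moduli space is empty or a sphere by Schoenberg's result and the claim is then degenerate, that $n\geq d+2$. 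Note that $d$-regularity is precisely what makes the two sides match in top degree: it yields $\mathcal{S}_{n-d}(\ell)=\emptyset$, so $\Lambda_\Q[\mathcal{S}_\cdot(\ell)]$ is concentrated in degrees $0,\dots,n-d-1$, which is exactly the range in which the perversities $\mathbf{p}_k$ and the reduced intersection ring are defined.

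Next I would check that $\phi$ is well defined, i.e.\ that the $Y_i$ satisfy the relations of the exterior face ring. Since $d$ is even, $d-1$ is odd, so graded commutativity of the Goresky-MacPherson intersection pairing gives $Y_iY_j=(-1)^{(d-1)^2}Y_jY_i=-Y_jY_i$, and in particular $2Y_i^2=0$, hence $Y_i^2=0$ over $\Q$; this produces a homomorphism from the free exterior algebra. To descend to the face ring I would prove, by induction on $m$ with base case $m=1$ the definition of $Y_i$, that for distinct indices $i_1,\dots,i_m$ the product $Y_{i_1}\cdots Y_{i_m}$ equals $\pm[\mathcal{M}_d(\ell_{\{i_1,\dots,i_m\}})]$ when $\{i_1,\dots,i_m,n\}$ is $\ell$-short and is $0$ otherwise. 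If some initial segment $\{i_1,\dots,i_j\}\cup\{n\}$ is already $\ell$-long the claim follows from the inductive hypothesis; otherwise $Y_{i_1}\cdots Y_{i_{m-1}}=\pm[\mathcal{M}_d(\ell_{\{i_1,\dots,i_{m-1}\}})]$ by induction, the subpseudomanifolds $\mathcal{M}_d(\ell_{\{i_1,\dots,i_{m-1}\}})$ and $\mathcal{M}_d(\ell_{\{i_m\}})$ meet transversally because the corresponding subsets of $\{1,\dots,n\}$ meet in $\{n\}$, and since $\mathbf{p}_{m-1}+\mathbf{p}_1=\mathbf{p}_m$ the intersection product of their fundamental classes is $\pm[\mathcal{M}_d(\ell_{\{i_1,\dots,i_m\}})]$, which vanishes exactly when this merged linkage space is empty, i.e.\ when $\{i_1,\dots,i_m,n\}$ is $\ell$-long. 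Hence $\phi$ annihilates the monomial of every non-face and sends the monomial of a face $J$ to $\pm Y_J=\pm[\mathcal{M}_d(\ell_J)]$.

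It remains to prove bijectivity. Surjectivity is immediate: by definition the reduced intersection ring is generated by its degree-$(d-1)$ part $I\!H^{d-1}(\mathcal{M}_d(\ell);\Q)$, which by Theorem \ref{even_generators} is spanned by $Y_1,\dots,Y_{a_1(\ell)}$, all in the image of $\phi$. For injectivity, the monomials $X_J$ with $J$ ranging over the faces of $\mathcal{S}_\cdot(\ell)$ together with $J=\emptyset$ form a $\Q$-basis of $\Lambda_\Q[\mathcal{S}_\cdot(\ell)]$, and $\phi$ maps them to $\pm[\mathcal{M}_d(\ell_J)]$. By Lemma \ref{poin_dual} there are classes $Z_J\in I^{\mathbf{0}}H_{|J|(d-1)}(\mathcal{M}_d(\ell))$ with $Z_J\cdot Y_K=\delta_{JK}$ for $|J|=|K|$; since the $Y_J$ with $|J|=m$ all lie in the single graded summand $I\!H^{m(d-1)}(\mathcal{M}_d(\ell))$, these pairings detect the $Y_J$ degreewise and force the classes $Y_J$, for all $J$ with $J\cup\{n\}$ being $\ell$-short, to be $\Q$-linearly independent. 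Thus $\phi$ sends a basis to a linearly independent set, so it is injective, and together with surjectivity it is an isomorphism of graded rings.

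The step requiring the most care is the well-definedness of $\phi$. The exterior relations genuinely use that $d$ is even, via the sign in the Goresky-MacPherson pairing; for odd $d$ the $Y_i$ need not square to zero, which is why the theorem is stated for even $d$. The face relations rely on the transversality statement for merged linkage spaces from the introduction and on identifying the intersection product with the class of the geometric intersection, which is legitimate because $\mathbf{p}_{m-1}+\mathbf{p}_1=\mathbf{p}_m$ and $[\mathcal{M}_d(\ell_J)]$ is $\mathbf{p}_{|J|}$-allowable; an empty merged linkage space then forces the corresponding product to vanish. Beyond this, the proof is a formal assembly of Theorem \ref{even_generators} and Lemma \ref{poin_dual}.
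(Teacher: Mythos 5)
Your proposal is correct and follows essentially the same route as the paper: generation in degree $d-1$ from Theorem \ref{even_generators}, anticommutativity from the sign in the Goresky--MacPherson pairing for even $d$, vanishing of $Y_K$ for $K\cup\{n\}$ long via $\mathcal{M}_d(\ell_K)=\emptyset$, and linear independence of the $Y_J$ from Lemma \ref{poin_dual}. The only difference is presentational (you build an explicit homomorphism from the face ring and spell out the induction behind $Y_J=\pm[\mathcal{M}_d(\ell_J)]$, which the paper asserts directly), so no substantive comparison is needed.
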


\begin{proof}
We can assume that $\ell$ is ordered. By Theorem \ref{even_generators} the reduced intersection ring is generated by $Y_1,\ldots,Y_k$ with $k=a_1(\ell)=|\mathcal{S}_1(\ell)|$. As $d$ is even, we get that
\begin{eqnarray*}
 Y_i\cdot Y_j &=& - Y_j \cdot Y_i
\end{eqnarray*}
for all $i,j\leq k$ by \cite[\S 2.4]{gormac}. Therefore 
\begin{eqnarray*}
I\!H^{(d-1)\ast}(\mathcal{M}_d(\ell);\Q)&\cong & \Lambda_\Q[Y_1,\ldots,Y_k]/I
\end{eqnarray*}
for some ideal $I$. If $K\subset \{1,\ldots,n-1\}$ is such that $K\cup \{n\}$ is $\ell$-long, then $\mathcal{M}_d(\ell_K)=\emptyset$, so $Y_K=0$, which means that $Y_K\in I$. It remains to show that there are no other relations in $I$. But by Lemma \ref{poin_dual} all $Y_J$ with $J\in \mathcal{S}_\cdot(\ell)$ are linearly independent, which means that $I$ is indeed generated by $Y_K$ with $K\cup\{n\}$ long.
\end{proof}

\begin{proof}[Proof of Theorem \ref{main_theorem}]
We can assume that both $\ell$ and $\ell'$ are ordered. The reduced intersection rings are homeomorphism invariants by \cite[\S 5]{gorma2}, so $\Lambda_\Q[\mathcal{S}_\cdot(\ell)]\cong \Lambda_\Q[\mathcal{S}_\cdot(\ell')]$. By \cite[Exercise 5.12]{brugub} there is an isomorphism of simplicial complexes $\mathcal{S}_\cdot(\ell)\cong \mathcal{S}_\cdot(\ell')$. But as $\ell$ and $\ell'$ are ordered, it follows that $\mathcal{S}_\cdot(\ell)= \mathcal{S}_\cdot(\ell')$, compare \cite[Lemma 3]{fahasc}. Therefore $\ell$ and $\ell'$ are in the same chamber.
\end{proof}

\begin{remark}
 If a length vector $\ell$ is $d$-regular, it is also $k$-regular for all $k<d$. By Theorem \ref{the_ext_face_ring}, the reduced intersection ring of $\mathcal{M}_d(\ell)$ does not depend on $d$ in the sense that it is isomorphic to the reduced intersection ring of $\mathcal{M}_k(\ell)$ for all even $k$ with $4\leq k \leq d$. One approach to extend Theorem \ref{main_theorem} would be to try to get a similar independence of $d$ in the odd case all the way to $k=3$. The intersection ring could then be determined using \cite{hauknu}. Notice that in \cite{fahasc} cohomology with $\Z/2$ coefficients was needed, so getting the analogous statement with rational coefficients would not be enough.

The statement of Theorem \ref{main_theorem} is true in the case $d=3$, but the condition of $3$-regularity can be replaced by $n>4$. So in fact there are only two length vectors which have homeomorphic linkage spaces but are not in the same chamber up to permutation. For $d=4$ one may therefore ask whether $4$-regularity can be replaced by $n>5$, or if one can give an example of length vectors from different chambers with $n>5$ with homeomorphic linkage spaces.
\end{remark}

\end{document}